\newif\ifistoreview
\newcommand{\R}{\mathbb{R}}
\newcommand{\lb}{\lambda}
\newcommand{\argmin}{\arg\min}
\newcommand{\st}{\textnormal{s.t.}}
\newcommand{\zz}{^\top}
\newcommand{\dist}{\textnormal{dist}}
\newcommand\bovermat[2]{%
	\makebox[0pt][l]{$\smash{\overbrace{\phantom{%
					\begin{matrix}#2\end{matrix}}}^{\text{$#1$}}}$}#2}
\crefname{hypothesis}{Hypothesis}{Hypotheses}
\title{Linearly-constrained nonsmooth optimization for training autoencoders\thanks{Submitted on 29 March, 2021. Revised on 17 January, 2022
		\funding{This work is supported partly by 
			the National Natural Science Foundation of China (No. 12125108, 11971466 and 11991021), 
			Hong Kong Research Grants Council grant PolyU15300120,
			Key Research Program of Frontier Sciences, 
			Chinese Academy of Sciences (No. ZDBS-LY-7022) and the CAS AMSS-PolyU Joint Laboratory in Applied Mathematics.}}}
\author{Wei Liu\thanks{State Key Laboratory of Scientific and Engineering Computing, Academy of { Mathematics} and
		Systems Science, Chinese Academy of Sciences, and University of Chinese Academy of Sciences,
		China (\email{liuwei175@lsec.cc.ac.cn}, \email{liuxin@lsec.cc.ac.cn}).}
	\and Xin Liu\footnotemark[2]
	\and Xiaojun Chen\thanks{Department of Applied { Mathematics}, The Hong Kong Polytechnic University, Hong Kong, China
		(\email{xiaojun.chen@polyu.edu.hk}).}}
\begin{document}
	
		\maketitle
		\begin{abstract}	
			A regularized minimization model with $l_1$-norm penalty (RP) is introduced for training the autoencoders
			that belong to a class of two-layer neural networks.
			We show that the RP can act as an exact penalty model
			which shares the same global minimizers, local minimizers, and
			{d(irectional)-}stationary points with
			the  original regularized model under mild conditions. We construct a bounded box region that contains
			at least one global minimizer of the RP, and
			propose a linearly constrained regularized minimization model with $l_1$-norm
			penalty (LRP) for training autoencoders. A smoothing proximal gradient algorithm
			is designed to solve the LRP. Convergence of the algorithm to a generalized d-stationary point of the RP and LRP is delivered. Comprehensive numerical experiments convincingly
			illustrate the efficiency as well as the robustness of the proposed
			algorithm.
		\end{abstract}
		
		\begin{keywords}
			autoencoders, neural network, penalty method, smoothing approximation, finite-sum optimization.
		\end{keywords}
		
		\begin{AMS}
			90C26, 90C30
		\end{AMS}
		
		\section{Introduction}\label{sec:review}
		
		{A deep neural network (DNN)~\cite{mcculloch1943logical} aims to solve a finite-sum minimization problem
			\begin{equation}\label{eq:dnn}
				\min_{{W_\ell,b_\ell,\ell=1\ldots,L}}\frac{1}{N}\sum_{n=1}^N\psi_n({\varphi_{n,L}}(W_1,\ldots,W_L,b_1,\ldots,b_L)).
			\end{equation}
			Here  $\varphi_{n,L}(W_1,\ldots,W_{L},b_1,\ldots,b_{L})=\sigma_{L}(W_{L}\sigma_{L-1}(\cdots\sigma_1(W_1x_n+b_1)+\cdots)+b_{L})$ denotes the outputs of the $L$-th hidden layer, and $\psi_n$ denotes the loss function measuring the output  $\varphi_{n,L}(W_1,\ldots,W_L,b_1,\ldots,b_L)$ and its corresponding true output for  $n=1,\ldots,N$, where $\{x_n\}_{n=1}^N$ is the data set, $W_{\ell} \in \R^{N_{\ell}\times N_{\ell-1}}$, $b_{\ell}\in \R^{N_{\ell}}$, $\sigma_{\ell}:\R^{N_{\ell}} \mapsto \R^{N_{\ell}}$ ($\ell=1,\ldots,L$) are
			the weight matrices, the bias vectors and the activation functions, respectively.

			A broad class of methods, based on stochastic gradient descent (SGD),
			are proposed to solve~\eqref{eq:dnn}, such as the vanilla SGD~\cite{cramir1946mathematical}, the Adadelta~\cite{zeiler2012adadelta}, and the Adam~\cite{kingma2014adam}.} In SGD methods, the gradient of the objective function is calculated by {the chain rule}, which is {applicable to} smooth activation functions, such as sigmoid, hyperbolic, and softmax functions~\cite{goodfellow2016deep}.
		{However if a nonsmooth activation function is used, such as the rectified linear unit (ReLU) or the leaky ReLU~\cite{maas2013rectifier},
			the subgradient of the objective function in~\eqref{eq:dnn} is difficult to calculate.
			At least the chain rule is no longer useful (see~\cite[Theorem 10.6]{clarke1990optimization}).
			As shown by recent studies, such nonsmooth activation functions have some advantages over the aforementioned smooth ones{\color{black}, as they can pursue the sparsity of the network~\cite{glorot2011deep}. The readers may refer to Glorot et al. \cite{glorot2011deep} and Jarrett et al. \cite{jarrett2009best} for the numerical comparisons between the DNN with smooth activation functions and those with nonsmooth ones. 
				Due to excellent numerical behavior, the ReLU activation function} has been widely used since 2010~\cite{agarap2018deep,dahl2013improving,nair2010rectified,
					sun2019optimization,xu2021artificial}.
			In practice, the SGD based approaches are still used to
			tackle the problem with nonsmooth activation functions.
			The exactness in calculating the subgradient of a nonsmooth function
			is usually neglected in SGD methods. {\color{black} Gradients} in
			a neighborhood are often used to approximate the one at
			a nonsmooth point. Certainly, such approximation may cause
			theoretical and numerical troubles in some cases.
			Hence, it is worthwhile to develop algorithms for
			solving	 problem~\eqref{eq:dnn} with nonsmooth activation functions
			and deal with the nonsmoothness appropriately.
		}

		In \cite{carreira2014distributed}, Carreira-Perpi\~n\'an and Wang {reformulate} problem (\ref{eq:dnn}) as the following constrained optimization problem {with $u_{n,0}=x_n$  for all $n=1,2,\ldots,N$,}
		\begin{equation}\label{eq:dnn2}
			\begin{aligned}
				\min_{{W_\ell,b_\ell, u_{n,\ell}}\atop{\ell=1,\ldots, L, n=1,\ldots,N}}\, & \, \frac{1}{N}\sum_{n=1}^N\psi_n(u_{n,L})\\
				{\rm s.t.}\quad \quad  \, & \, u_{n,\ell}=\sigma_{\ell}(W_{\ell}u_{n,\ell-1}+b_{\ell}),  \quad n=1,\ldots,N, \, \ell=1,\ldots,L,
			\end{aligned}
		\end{equation}
		and propose a method of auxiliary coordinates to solve (\ref{eq:dnn2}).
		Moreover, an alternating direction method of multipliers (ADMM)~\cite{taylor2016training}
		and a block coordinate descent method (BCD)~\cite{lau2018proximal}
		are proposed to solve the constrained model
		and its $l_2$-norm penalty problem, respectively.
		However, these methods are less efficient than SGD based approaches,
		and lack of theoretical guarantee {(see~\cite{zeng2019global})}.
		
		More recently, Cui et al. \cite{cui2020multicomposite} use an $l_1$-norm penalty method to replace the constraints in (\ref{eq:dnn2}) by adding $\sum^N_{n=1}\sum^{L}_{\ell=1}    \|u_{n,\ell}-\sigma_{\ell}(W_{\ell}u_{n,\ell-1}+b_{\ell})\|_1$ in the objective function.
		They provide an exact penalty analysis and establish the convergence of the sequence generated by their proposed algorithm to a directional stationary point, {\color{black} which will be defined in \eqref{eq:direcs}}.
		To the best of our knowledge, this is the first mathematically rigorous method
		for training deep neural networks with nonsmooth activation functions.
		However, some assumptions imposed in their theoretical analysis
		are restrictive for some applications. For instance, the ReLU does not satisfies
		the assumptions on activation functions  in \cite[Corollary 2.2]{cui2020multicomposite}.
		Moreover, the boundedness assumption on the sequences of iterates imposed for convergence analysis
		is not natural since the solution set of (\ref{eq:dnn2}) is unbounded.
		For example, suppose that $\{\bar{u}_{n,\ell},\bar{W}_{\ell},\bar{b}_{\ell}\}_{\ell=1,n=1}^{L,N}$ is a global minimizer of model (\ref{eq:dnn2}){, it} is easy to verify that $\{{\hat{u}_{n,\ell},\hat{W}_{\ell},
			\hat{b}_{\ell}}\}_{\ell=1,n=1}^{L,N}$
		is also a global minimizer, where
		\begin{eqnarray*}
			&\hat{u}_{n,1}=t\,\bar{u}_{n,1},
			\hat{W}_{1}=t\bar{W}_{1},
			\hat{b}_{1}=t\bar{b}_{1},
			\hat{u}_{n,2}=\bar{u}_{n,2},
			\hat{W}_{2}=\frac{1}{t}\bar{W}_{2},
			\hat{b}_{2}=\frac{1}{t}\bar{b}_{2},
			&\\
			&\hat{u}_{n,\ell}=\bar{u}_{n,\ell},\hat{W}_{\ell}=\bar{W}_{\ell},\hat{b}_{\ell}=\bar{b}_{\ell}, \quad \mbox{for\,}\,\,
			n=1,\ldots,N, \ell = 3,...,L&
		\end{eqnarray*}
		for any $t> 0$.
		Let $t$ tend to infinity, if $\bar{W}_1\neq 0$ or $\bar{b}_1 \neq 0$, then the norm of
		$\{{\hat{u}_{n,\ell},\hat{W}_{\ell},
			\hat{b}_{\ell}}\}_{\ell=1,n=1}^{L,N}$ tends to infinity.
		
		{To overcome the unboundness of the solution set of (\ref{eq:dnn2}), in this paper we consider the regularized model of problem (\ref{eq:dnn2}) in \cite{carreira2014distributed}, which adds the regularization term
			$\|W_\ell\|_F^2$ in the objective function. 	Motivated by the ideas of the exact $l_1$-norm penalty and directional stationarity in \cite{cui2020multicomposite},  we design a deterministic algorithm
			for training {\color{black} the autoencoder, a special two-layer network,} 
			using ReLU, with guaranteed convergence, and achieve competitive performances comparing with the
			SGD based approaches in solving large-scale problems.
			{\color{black}
				Our proposed model can be generalized to problem (\ref{eq:dnn2}) with certain regularizing term (see \eqref{eq:dnn21}--\eqref{eq:dnn23} in the conclusion part.). In fact, 
				the number of layers does not affect the validity of our
				theoretical analysis on the model.
				The reason we focus on the autoencoder in this paper is that a large number of layers does increase lots of
				tedious notations as well as rapidly increasing number of variables which requires further development on the algorithm to maintain the  comparability with existing approaches. Such development is out of the main scope of this paper.
			}
			

		}
		
		\subsection{Regularized Autoencoders}
		
		Training an autoencoder using ReLU as the activation function
		can be formulated as the following
		nonsmooth nonconvex finite-sum {minimization} problem.
		\begin{equation}\label{eq:auto}
			\min_{W,b}\frac{1}{N}\sum_{n=1}^N\|\sigma(W\zz \sigma(Wx_n+b_1)+b_2)-x_n\|_2^2,
		\end{equation}
		where $\{x_n\in\R^{N_0}\}_{n=1}^N$ is the given data,
		$W\in\R^{N_1\times N_0}$ is the weight matrix,
		$b_1\in\R^{N_1}$ and $b_2\in\R^{N_0}$ are the bias vectors. {\color{black} For convenience, we use
			$X=(x_1,x_2,\ldots,x_N)\in\R^{N_0\times N}$ to denote the data matrix,
			and denote
			$b={(b_1\zz,b_2\zz )\zz} \in\R^{N_1+N_0}$ as the combination
			of two bias vectors.}
		{\color{black}
			Here, we select $W\zz$ as the weight matrix of the second layer,
			which is the transpose of that of the first layer. The
			consequent model \eqref{eq:auto} is called the autoencoder with
			tied weight which has been widely used in practice (see in \cite{goodfellow2016deep, hinton2006reducing}). However, there
			exists autoencoder without tied weight, namely, the weight matrices
			of the two layers take $W_1$ and $W_2$, respectively (see in \cite{ng2011sparse}). Nevertheless, 
			Li and Nguyen \cite{li2019auto} have shown that by using the tied weight, the training speed is increasing and the numerical performance is comparable than that without tied weight.
			Then, it becomes uncommon to consider the general case.
			On the other hand, our new model, algorithm and theorectical analysis can be generalized to the autoencoder without tied weight easily.}
		
		\begin{figure}[htbp!]
			\centering
			\setcounter{subfigure}{0}
			\subfloat{\includegraphics[width=120mm]{./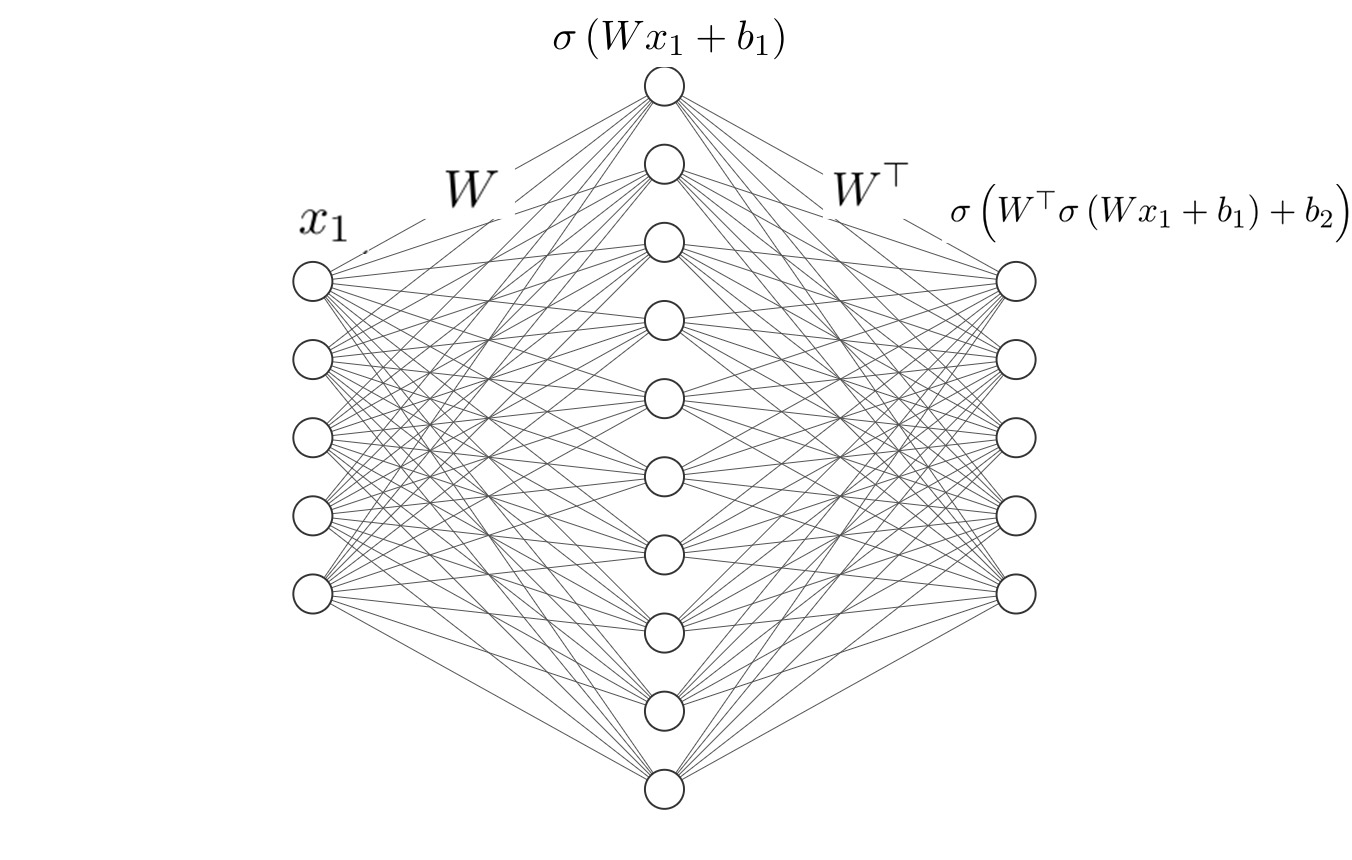}}
			\caption{Illustration of the network of an autoencoder}\label{fig:NN2}	\vspace{-4mm}
		\end{figure}
		
		In this paper, we focus on the ReLU, i.e. $\sigma(y)= y_{+} := \max\{0,y\}$.
		An autoencoder aims to learn a {prediction function} $\sigma(W\zz\sigma(Wx_n+b_1)+b_2)$ for
		the given data $\{x_n\in\R^{N_0}\}_{n=1}^N$ without any label, since $\{x_n\}_{n=1}^N$ is also regarded as the true
		value of the output layer. Hence, the autoencoder is classified
		as an unsupervised learning tool. 
	In recent years, autoencoders have been widely used in denoising, dimensionality reduction, and feature learning (e.g.,~\cite{bourlard1988auto,le1987modeles, wen2017new}). Besides, {autoencoders can
		be used as a preprocessing tool before training a DNN (e.g.,~\cite{ hinton2006reducing,pasa2014pre}).}

	{
		In practice, directly solving~\eqref{eq:auto} may lead to
		overfitting or ill-condition.
		To conquer these issues, the authors of~\cite{goodfellow2016deep} {introduce} two regularization terms to guarantee the model's robustness.
		The first class of regularizers {is}
		the $l_F$-norm term $\|W\|_F^2$, {\color{black} called weight decay,}
		which can effectively avoid the overfitting phenomenon~\cite{krogh1992simple}.
		The second class is the $l_1$-norm that can pursue the sparsity \cite{goodfellow2016deep, ng2011sparse}.}
	In this paper, we use the $l_F$-norm for the weight matrix and the $l_1$-norm for the auxiliary vectors.   To present our optimization model in $R^{N_2}$ with $N_2=N_0N_1+N_1+N_0+N_1N$,  we introduce a vector variable
	\begin{equation}\label{eq:defz}
		z=(\mathrm{vec}(W)\zz, b\zz,\mathrm{vec}(V)\zz)\zz\in\R^{N_2},
	\end{equation}
	where  $V=(v_1,v_2,\ldots,v_N)\in\R^{N_1\times N}$ is an auxiliary variable with $v_n=(Wx_n+b_1)_{+}$ for all $n=1,\ldots,N$, and $\mathrm{vec}(Y) \in \R^{lm}$ denotes the columnwise vectorization of the matrix $Y\in\R^{l\times m}$.
	Let
	$$ \mathcal{F}(z)=  \frac{1}{N}\sum_{n=1}^N\|(W\zz v_n+b_2)_{+}-x_n\|_2^2 \quad {\rm and} \quad   \mathcal{R}(z)=\lb_1\sum_{n=1}^Ne\zz v_n+\lb_2\|W\|_F^2$$
	denote the fidelity term and regularization term, respectively, where $e=(1,\ldots,1)\zz \in\R^{N_1}$ and $\lb_1,\lb_2>0$.
	We consider the following Regularized (R) minimization  model
	for the autoencoders
	\begin{equation}
		\tag{R}\label{eq:reguauto2}
		\begin{aligned}
			\min_{ z}\,\,&\mathcal{F}(z)+\mathcal{R}(z)\\
			\st 
			\,\,& z \in \Omega_1:= \left\{z : v_n= (Wx_n+b_1)_{+}, \, n=1,\ldots,N \right\}.
		\end{aligned}
	\end{equation}
	{\color{black} We would like to mention that 
		the equivalent form of problem \eqref{eq:reguauto2}, namely 
		\eqref{eq:auto} with regularizer $\mathcal{R}$, 
		has been widely used in autoencoders (see in \cite{goodfellow2016deep, wen2017new}). }
	
	\subsection{Our Focuses and Motivation}\label{sec:focus}
	
	{{The feasible set $\Omega_1$ of problem \eqref{eq:reguauto2} is nonconvex and the standard constraint qualifications may fail due to the nonsmooth equality constraints in~\eqref{eq:reguauto2}.}
		Hence, we 
		introduce the following
		Regularized minimization model with $l_1$-norm Penalty (RP) for
		the autoencoders.}
	\begin{equation}\tag{RP}\label{eq:reguauto4}
		\begin{aligned}
			\min_{ z}\,\,&\mathcal{O}(z):=\mathcal{F}(z)+\mathcal{R}(z)+\mathcal{P}(z)\\
			\st 
			\,\,& z\in \Omega_2: = \left\{z : v_n\ge (Wx_n+b_1)_{+}, \, n=1,\ldots,N \right\},
		\end{aligned}
	\end{equation}
	where $
	\mathcal{P}(z):=\beta\sum_{n=1}^Ne\zz \left(v_n-(W x_n+b_{1})_{+}\right)
	$ is the penalty term.

	Compared with the $l_1$-norm penalty term $\sum_{n=1}^N\left\|v_n-(Wx_n+b_{1})_{+}\right\|_1$ proposed in  \cite{cui2020multicomposite}, the subdifferential of $\mathcal{P}(z)$
	enjoys an explicit expression.
	In addition, the feasible set $\Omega_2$ of~\eqref{eq:reguauto4} is convex and the slater-type constraints qualification 
	holds~\cite[Section 6.3, Proposition 6.3.1]{clarke1990optimization}.
	However, the solution set of (RP) may be unbounded as that of the model in \cite{cui2020multicomposite}. To overcome the unboundness and ensure the sequence generated by the algorithm is bounded, we introduce
	a convex set
	$$
	\Omega_{3}:=\left\{z: \|b\|_{\infty} \leq \alpha\right\},
	$$
	where
	\begin{equation}\label{def:alpha}
		\alpha=\max\left\{\frac{\theta}{\lb_1}+\sqrt{\frac{N_1N_0\theta}{\lb_2}}\|X\|_1,\frac{\theta\sqrt{N_1N_0\theta}}{\lb_1\sqrt{\lb_2}}+\sqrt{N\theta}+\|X\|_1\right\},
		\,\,\theta> \frac{1}{N}\|X\|_F^2.
	\end{equation}
	
	We will show that~\eqref{eq:reguauto4}  has a global solution in $\Omega_3$.
	Hence, it suffices to
	solve~\eqref{eq:reguauto4}  restricted to $\Omega_3$. Note that $v_n\geq (Wx_n+b_1)_{+}$ can be represented by $v_n\geq Wx_n+b_1$ and $v_n\geq 0$.  Let  $\nu=2(NN_1+N_0+N_1)$,
	
	{		\color{black}
		\begin{align*}
			&A=
			\left[
			\begin{array}{ccc}
				\bovermat{W:N_0N_1}{X\zz\otimes I_{N_1}} & \bovermat{b:N_1+N_0}{e_{N}\otimes [I_{N_1} \,\,\,\textbf{0}]}  & \bovermat{V:NN_1}{-I_{N_1N}}\\
				\textbf{0} & \textbf{0}& -I_{N_{1} N}\\
				\textbf{0} & I_{N_{1}+N_0} & \textbf{0} \\
				\textbf{0} & -I_{N_{1}+N_0}  & \textbf{0}
			\end{array}
			\right]\in\R^{\nu\times N_2},  \quad c=
			\left[
			\begin{array}{c}
				\textbf{0}\\
				\textbf{0} \\
				\alpha e_{N_1+N_0}\\
				\alpha e_{N_1+N_0}
			\end{array}
			\right]\in\R^{\nu},
	\end{align*}}
	where $\otimes$ represents the Kronecker product, {\color{black} $e_{N}\in\R^{N}$, $e_{N_1+N_0}\in\R^{N_1+N_0}$ denote the vector whose elements are all one.}
	We consider the following Linearly constrained Regularized minimization
	model with $l_1$-norm Penalty (\ref{eq:reguauto7})
	\begin{equation}\tag{LRP}\label{eq:reguauto7}
		\begin{aligned}
			\min_{ z}\,\,&\,\, \mathcal{O}( z)\\
			\st\,\, & \,\,    z\in \mathcal{Z}:=\Omega_2\cap\Omega_3=\{z:A z\leq c\}.
		\end{aligned}
	\end{equation}
	
	\subsection{Contribution}
	{We propose a partial penalty model~\eqref{eq:reguauto4}
		and establish the equivalence between  the models~\eqref{eq:reguauto4}  and~\eqref{eq:reguauto2}  regarding  global minimizers, local minimizers, and directional stationary points under some mild conditions.
		Moreover, we show that the solution set of \eqref{eq:reguauto7} is bounded and contains at least one of global minimizers of~\eqref{eq:reguauto4}, and provide conditions such that \eqref{eq:reguauto7} and  \eqref{eq:reguauto4} have the same local minimizers and directional stationary points in ${\cal Z}$. 
		
		We propose a smoothing proximal gradient algorithm for solving~\eqref{eq:reguauto7}, whose subproblem at each iteration is a  structured strongly convex quadratic program. We develop a splitting algorithm for solving the subproblem by using the special structure, which is faster than {the} ``quadprog'' \cite{turlach2019package} and  {the} ``CVX'' \cite{grant2014cvx}.
		We prove that the sequence generated
		by our algorithm converges to a generalized directional stationary point of~\eqref{eq:reguauto7} without assuming the boundness of
		sequences or existence of accumulation points.
		
		The numerical experiments demonstrate
		that our algorithm, equipped {with} adaptively selected stepsize and smoothing parameters, outperforms the popular SGD methods (e.g., Adam, Adadelda, and vanilla SGD) in acquiring better and more robust solutions
		to a group of randomly  generated data sets and one real data set for autoencoders.
		More specifically, compared with SGD methods, our algorithm achieves lower training error and objective function values, and obtains sparser solutions to testing problems.
	}
	
	\subsection{Notations and Organizations}
	Let $\mathcal{B}_{\epsilon}(y)$ be the closed Euclidean ball in $\R^m$ centered at $y$ and radius $\epsilon$. 
	The $m\times m$ identity matrix is denoted by $I_{m}$. 
	Given a nonempty closed set \(\Omega\) 
	and a point \(y^{*}\), we use $\dist(y^{*},\Omega)=\inf_{y\in\Omega}||y-y^*||_2$ to denote the distance from $y^*$ to $\Omega$. We use $\mathrm{co}(\Omega)$ to represent convex hull of $\Omega$.
	
	The rest of this paper is organized as follows. In~Section \ref{sec:pre}, we give theoretical results for the relationship among the three models~\eqref{eq:reguauto2},~\eqref{eq:reguauto4} and~\eqref{eq:reguauto7}.
	In~Section \ref{sec:smooth}, we propose a smoothing proximal gradient algorithm for solving \eqref{eq:reguauto7} and present the global convergence
	of the algorithm. In~Section \ref{sec:num},
	{we illustrate the performance of our proposed algorithm
		through comprehensive numerical experiments.}
	Concluding remarks are given in the last section.
		
	\section{Model Analysis}\label{sec:pre}
	
	In this section,
	we aim to theoretically investigate the relationship among problems  \eqref{eq:reguauto2},~\eqref{eq:reguauto4}  and~\eqref{eq:reguauto7} for autoencoders.
	
	\subsection{Preliminaries}\label{sec:pre1}
	In this subsection, we present some preliminary definitions.
	Let $\mathrm{Proj}_{\Omega}(y^*)=\arg \min \left\{\|y-y^*\|_{2}: y \in \Omega\right\}$ denote the orthogonal projection of a vector $y^* \in \R^m$ onto a convex set $\Omega\subseteq \R^m$.
	
	The Clarke subdifferential \cite[Section 1.2]{clarke1990optimization} of a locally Lipschitz continuous function $f: \R^m \to \R$ at $y^{*}$ is defined by
	$
	\partial f\left(y^{*}\right)=\operatorname{co}\left\{\lim _{y \rightarrow y^{*}} \nabla f(y): f \text { is smooth at } y\right\}$.
	
	We use $f^{\prime}\left(y ; d \right)$ to denote the directional derivative of a directional differentiable function $f$ at $y $ along the direction $d$, i.e.,
	\begin{equation}
		\label{eq:dsta}
		f^{\prime}\left(y ; d \right)=\lim _{ t \downarrow 0} \frac{f(y+t d)-f(y)}{t}.
	\end{equation}
%

A function $f$ is said to be regular~\cite[Definition 2.3.4]{clarke1990optimization} at $\bar{ y}\in\R^m$ provided that if
for all $d$, the directional derivative $f^{\prime}(\bar{ y} ; d)$ exists, and $$f^{\prime}(\bar{ y} ; d)=f^{\circ}(\bar{ y} ; d),$$
where $f^{\circ}(\bar{ y} ; d):=\limsup _{{y \rightarrow \bar{ y}}\atop{t \downarrow 0}} \frac{f(y+t d)-f(y)}{t}$ is the generalized directional derivative at $\bar{y}$ along the direction $d$~\cite{clarke1990optimization}. 

It is known that if $f$ is piecewise smooth and  Lipschitz continuous in a neighborhood of $y$, then $f$ is semismooth and directional differentiable at $y$ \cite{mifflin}.
The objective functions of \eqref{eq:reguauto2},~\eqref{eq:reguauto4}  and~\eqref{eq:reguauto7}
are locally Lipschitz and piecewise smooth. Hence they are semismooth functions and
directional differentiable.


%
%

Let $\mathcal{T}_{\Omega}(\bar{y})=\{d:d=\lim _{y\in\Omega, y \rightarrow \bar{y},\tau\downarrow 0} \frac{y-\bar{y}}{\tau}\}$ be the tangent cone of a set $\Omega$ at $\bar{y}$.

{\color{black}
	\begin{definition}\label{eq:direcs}
		We call $\bar{z}\in\Omega_1$, $\bar{z}\in\Omega_2$, $\bar{z}\in\mathcal{Z}$  a d(irectional)-stationary point \cite{cui2020multicomposite} of problems~\eqref{eq:reguauto2}, \eqref{eq:reguauto4}  and \eqref{eq:reguauto7}, respectively, if
		\begin{equation}\label{dR}
			\mathcal{F}^{\prime}\left(\bar{z} ; d \right)+ \nabla \mathcal{R}(\bar{z})\zz d \geq 0, \quad \forall d\in\mathcal{T}_{\Omega_1}(\bar{z}),
		\end{equation}
		\begin{equation}\label{dRP}
			\mathcal{O}^{\prime}\left(\bar{z} ; d \right)\geq 0, \quad \forall d\in\mathcal{T}_{\Omega_2}(\bar{z}),
		\end{equation}
		\begin{equation}\label{dLRP}
			\mathcal{O}^{\prime}\left(\bar{z} ; d \right)\geq 0, \quad \forall d\in\mathcal{T}_{\mathcal {Z}}(\bar{z}).
		\end{equation}
\end{definition}}

We call $\bar{z}\in\Omega_1$, $\bar{z}\in\Omega_2$, $\bar{z}\in\mathcal{Z}$  a generalized d(irectional)-stationary point of problems~\eqref{eq:reguauto2}, \eqref{eq:reguauto4}  and \eqref{eq:reguauto7}, respectively, if \eqref{dR}--\eqref{dLRP} hold with $\mathcal{F}^{\circ}(\bar{z} ; d )$ and $\mathcal{O}^{\circ}(\bar{z} ; d )$ instead of $\mathcal{F}^{\prime}(\bar{z} ; d )$ and $\mathcal{O}^{\prime}(\bar{z} ; d )$.

We call $\bar{z}\in\mathcal{Z}$ a generalized
KKT point of \eqref{eq:reguauto7} if there exists a nonnegative vector $\bar{\gamma} \in \R^\nu$ such that
\begin{equation}\label{eq:kktxi}
	\begin{aligned}
		0 \in \partial {\cal O}(\bar{z})+A^{\top} \bar{\gamma},\,\quad
		\bar{\gamma}^{\top}\left(A\bar{z}-c\right)=0,\,\quad A \bar{z}-c \leq 0.
	\end{aligned}
\end{equation}

\subsection{Global and Local Solutions}
Let $\mathcal{S}^*$,  $\mathcal{S}$ and ${\cal Z}^*$ be  the global solution sets of \eqref{eq:reguauto2},~\eqref{eq:reguauto4}  and~\eqref{eq:reguauto7}, respectively.
In this subsection, we
prove $\mathcal{S}^*$,  $\mathcal{S}$ and ${\cal Z}^*$  are not empty, and
${\cal Z}^*\subset \mathcal{S} =\mathcal{S}^*$.

We define a level set of problem~\eqref{eq:reguauto4} using $\theta$ defined in (\ref{def:alpha}) as follows
$$\Omega_\theta=\{ z \in \Omega_2 :   \mathcal{O}(z)\leq \theta\}.$$
Obviously, $0\in \Omega_\theta$ since ${\cal O}(0)=\frac{1}{N}\|X\|_F^2<\theta$ and $0\in \Omega_2$.
\begin{theorem}\label{thm:nonem}   For any $z\in\Omega_\theta$,
	the following statements hold.
	\begin{description}
		\item{(a)} $\|W\|_F^2\le \frac{\theta}{\lambda_2},
		\|V\|_1\le \frac{\theta}{\lambda_1}$ and $\|b_{+}\|_{\infty}\leq\alpha$.
		\item{(b)}	
		$\bar{z}=\mathrm{Proj}_{\Omega_3}(z)\in\mathcal{Z}$ and $\mathcal{O}(\bar{ z})=\mathcal{O}(z).$
	\end{description}
	Moreover, the solution set ${\cal Z}^*$ of~\eqref{eq:reguauto7} is not empty and bounded, and  ${\cal Z}^*
	\subset {\cal S}$.
\end{theorem}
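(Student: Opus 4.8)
The plan is to prove the two itemized estimates first, then deduce the projection identity in (b) from them, and finally combine both with a level-set argument to settle the ``Moreover'' claim. The whole argument is driven by the nonnegativity of the three summands of $\mathcal{O}$ on $\Omega_2$. Indeed, on $\Omega_2$ every $v_n \ge (Wx_n+b_1)_+\ge 0$, so $\mathcal{R}(z)\ge 0$ and $\mathcal{P}(z)=\beta\sum_n e\zz(v_n-(Wx_n+b_1)_+)\ge 0$, while $\mathcal{F}(z)\ge 0$ trivially; hence each summand is at most $\theta$ whenever $z\in\Omega_\theta$. For part (a) this gives $\lambda_2\|W\|_F^2\le\theta$ and $\lambda_1\sum_n e\zz v_n\le\theta$, and since $V\ge 0$ one has $\sum_n e\zz v_n=\|V\|_1$, yielding the first two inequalities at once. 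For $\|b_+\|_\infty\le\alpha$ I would bound the positive entries of $b_1$ and $b_2$ separately: for $b_1$ use feasibility $v_n\ge Wx_n+b_1$ to write $(b_1)_i\le (v_n)_i-(Wx_n)_i$ and bound $(v_n)_i\le\|V\|_1\le\theta/\lambda_1$ and $|(Wx_n)_i|\le\|W\|_F\|x_n\|_2$; for $b_2$ use $\mathcal{F}(z)\le\theta$ to get $\|(W\zz v_n+b_2)_+-x_n\|_2\le\sqrt{N\theta}$, hence a componentwise bound on $(b_2)_i$ in terms of $|(W\zz v_n)_i|$, $\sqrt{N\theta}$ and $\|x_n\|_2$. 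Matching these to the two entries inside the maximum defining $\alpha$ is routine; the factors $\sqrt{N_1N_0}$ there are merely slack over the sharper norm inequalities.

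For part (b), the key observation is that $\Omega_3$ constrains only the $b$-block, so $\mathrm{Proj}_{\Omega_3}$ leaves $W$ and $V$ fixed and clips each entry of $b$ to $[-\alpha,\alpha]$. By part (a) no positive entry of $b$ exceeds $\alpha$, so clipping alters only entries smaller than $-\alpha$, and I would show this alters neither feasibility nor objective. For an index $i$ with $(b_1)_i<-\alpha$ one has $|(Wx_n)_i|\le\alpha$ (a bound already produced in part (a)), so $(Wx_n+b_1)_i<0$ both before clipping and after replacing $(b_1)_i$ by $-\alpha$, whence $((Wx_n+\bar b_1)_+)_i=((Wx_n+b_1)_+)_i=0$. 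Thus $(Wx_n+\bar b_1)_+=(Wx_n+b_1)_+$ for every $n$, giving $\bar v_n=v_n\ge(Wx_n+\bar b_1)_+$, i.e. $\bar z\in\Omega_2$, hence $\bar z\in\Omega_2\cap\Omega_3=\mathcal{Z}$. The same argument applied to $b_2$ with the bound $|(W\zz v_n)_i|\le\alpha$ shows $(W\zz v_n+\bar b_2)_+=(W\zz v_n+b_2)_+$, so $\mathcal{F}$ is unchanged; since $\mathcal{P}$ depends on $b$ only through $(Wx_n+b_1)_+$ and $\mathcal{R}$ not at all, every term of $\mathcal{O}$ is preserved and $\mathcal{O}(\bar z)=\mathcal{O}(z)$.

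For the ``Moreover'' statement I would localize the minimization to a compact set. Since $0\in\mathcal{Z}$ and $\mathcal{O}(0)=\frac{1}{N}\|X\|_F^2<\theta$, the level set $\{z\in\mathcal{Z}:\mathcal{O}(z)\le\mathcal{O}(0)\}$ is contained in $\Omega_\theta\cap\Omega_3$, hence bounded by part (a) together with $\|b\|_\infty\le\alpha$, and it is closed because $\mathcal{O}$ is continuous and $\mathcal{Z}$ is closed; continuity of $\mathcal{O}$ on this nonempty compact set yields a minimizer, so $\mathcal{Z}^*\ne\emptyset$, and since every element of $\mathcal{Z}^*$ lies in this set, $\mathcal{Z}^*$ is bounded. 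To get $\mathcal{Z}^*\subset\mathcal{S}$ I would show $\inf_{\mathcal{Z}}\mathcal{O}=\inf_{\Omega_2}\mathcal{O}$: the inclusion $\mathcal{Z}\subseteq\Omega_2$ gives ``$\ge$'', while part (b) gives ``$\le$'', because any $z\in\Omega_2$ with $\mathcal{O}(z)\le\theta$ lies in $\Omega_\theta$ and its projection $\bar z\in\mathcal{Z}$ has $\mathcal{O}(\bar z)=\mathcal{O}(z)$, and a minimizing sequence for $\Omega_2$ eventually satisfies $\mathcal{O}\le\theta$ since the infimum is below $\theta$. As this common infimum is attained on $\mathcal{Z}$, any $z^*\in\mathcal{Z}^*$ is feasible for and optimal over $\Omega_2$, i.e. $z^*\in\mathcal{S}$.

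I expect the main obstacle to be the estimate $\|b_+\|_\infty\le\alpha$ in part (a): it controls everything downstream, since precisely the two componentwise bounds $|(Wx_n)_i|\le\alpha$ and $|(W\zz v_n)_i|\le\alpha$ are what make the clipping in part (b) harmless. Carrying out the norm bookkeeping so that the estimates land inside the prescribed constant $\alpha$ (and recognizing the $\sqrt{N_1N_0}$ factors as deliberate slack rather than something to reproduce exactly) is the only genuinely delicate point; the remainder is soft analysis.
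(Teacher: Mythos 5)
Your proposal is correct and follows essentially the same route as the paper: nonnegativity of the three summands of $\mathcal{O}$ on $\Omega_2$ for the $W$ and $V$ bounds, componentwise use of feasibility and of $\mathcal{F}(z)\le\theta$ to bound $b_{1}$ and $b_{2}$ from above, the observation that $\mathrm{Proj}_{\Omega_3}$ only clips entries of $b$ below $-\alpha$ without changing any $(\cdot)_+$ term, and a compact-level-set plus projection argument for the last claim. The only differences are cosmetic — slightly sharper norm bookkeeping in (a) and a direct equality-of-infima phrasing where the paper argues by contradiction.
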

\begin{proof}
	(a) 	The first two inequalities are from $V\ge 0$, $\mathcal{O}(z)\leq \theta $, $\mathcal{F}(z)\geq 0$,
	$\mathcal{P}(z)\geq 0$ and $\mathcal{R}(z)\geq 0$, which imply
	\begin{equation}
		\lb_1\sum_{n=1}^Ne\zz v_n\leq \theta \,\,\, {\rm and} \,\,\,  \lb_2\|W\|_F^2\leq \theta.\label{eq:set1-1}
	\end{equation}
	Now we prove $\|b_{+}\|_{\infty}\leq\alpha$.	From the Cauchy inequality and~\eqref{eq:set1-1},
	we have
	\begin{equation}\label{eq:equv12}
		\begin{aligned} \sum_{j=1}^{N_1}\sum_{s=1}^{N_0}|W_{j,s}|\leq \sqrt{N_1N_0}\|W\|_F\leq\sqrt{\frac{N_1N_0\theta}{\lb_2}}.
		\end{aligned}
	\end{equation}
	For $n=1,\ldots,N$, combining~\eqref{eq:set1-1} with $ z\in\Omega_2$, we obtain that
	\begin{equation}\label{eq:set1}
		\begin{aligned}
			\frac{\theta}{\lb_1}\geq e\zz v_n\geq e\zz (Wx_n+b_1)_{+}\geq (W_{j,\cdot}x_n+b_{1,j})_{+}\geq W_{j,\cdot}x_n+b_{1,j}
		\end{aligned}
	\end{equation}
	for all $j=1,\ldots,N_1$. On the other hand,~\eqref{eq:equv12} yields $\|W_{j,\cdot}\|_{\infty}\leq \sqrt{\frac{N_1N_0\theta}{\lb_2}}$, which implies
	\begin{equation}\label{eq:set2}
		|W_{j,\cdot}x_n|\leq \sqrt{\frac{N_1N_0\theta}{\lb_2}}\|X\|_1.
	\end{equation}
	Together with~\eqref{eq:set1}, we can conclude that $b_{1,j}$ satisfies
	\begin{equation}\label{eq:setb1}
		b_{1,j}\leq \frac{\theta}{\lb_1}+\sqrt{\frac{N_1N_0\theta}{\lb_2}}\|X\|_1, \quad \forall j=1,\ldots,N_1.
	\end{equation}
	
	From ${\cal F}(z) \le \theta $, we have
	\begin{equation}\label{eq:set3}
		\sqrt{N\theta}\geq (W_{\cdot,j}\zz v_n+b_{2,j})_{+}-X_{j,n}\geq W_{\cdot,j}\zz v_n+b_{2,j}-X_{j,n}
	\end{equation}
	for all $n=1,\ldots,N$ and $j=1,\ldots,N_0$.
	From $\|v_n\|_1\leq \frac{\theta}{\lb_1}$ and $\|W_{\cdot,j}\|_1\leq \sqrt{\frac{N_1N_0\theta}{\lb_2}}$, we find
	\begin{equation}\label{eq:set4}
		|W_{\cdot,j}\zz v_n|\leq \frac{\theta\sqrt{N_1N_0\theta}}{\lb_1\sqrt{\lb_2}}.
	\end{equation}
	Together with~\eqref{eq:set3}, we obtain that
	\begin{equation}\label{eq:setb2}
		b_{2,j}\leq \frac{\theta\sqrt{N_1N_0\theta}}{\lb_1\sqrt{\lb_2}}+\sqrt{N\theta}+\|X\|_1,
		\quad \forall j=1,\ldots, N_0.
	\end{equation}
	Combining~\eqref{eq:setb1} and~\eqref{eq:setb2}, we finally arrive at the assertion that $\|b_{+}\|_{\infty}\leq\alpha$.
	
	$($b$)$  Let $\bar{z}=(\mathrm{vec}(\bar{W})\zz, \bar{b}\zz,\mathrm{vec}(\bar{V})\zz)\zz$ with
	$\bar{W}=W$, $\bar{V}=V$ and
	\begin{equation}\label{eq:setcon}
		\bar{b}_{1,j_1}=\left\{
		\begin{aligned}
			&b_{1,j_1} \,\,\,\text{if } {b}_{1,j_1} \geq -\alpha,\\
			&-\alpha \,\,\,\text{otherwise,}
		\end{aligned}\right.\quad \quad
		\bar{b}_{2,j_2}=\left\{
		\begin{aligned}
			&b_{2,j_2} \,\,\,\text{if } {b}_{2,j_2}\geq -\alpha,\\
			&-\alpha\,\,\,\text{otherwise}
		\end{aligned}\right.
	\end{equation}
	for all $j_1=1,\ldots,N_1$, $j_2=1,\ldots,N_0$. By part (a), we have $\|\bar{ b}\|\le \alpha$. Hence
	$\bar{z}\in \Omega_3.$
	
	By~\eqref{eq:set2} and~\eqref{eq:set4}, we have
	$${b}_{1,j_1}+{W}_{j_1,\cdot}x_n\leq \bar{b}_{1,j_1} +\bar{ W}_{j_1,\cdot}x_n\leq -\alpha+ \sqrt{\frac{N_1N_0\theta}{\lb_2}}\|X\|_1\leq 0,
	\,\, {\rm if}\,\, 	\bar{b}_{1,j_1}=-\alpha,$$
	$${b}_{2,j_2}+{W}_{\cdot,j_2}\zz v_n \leq \bar{b}_{2,j_2}+ \bar{W}_{\cdot,j_2}\zz \bar{v}_n\leq -\alpha+ \frac{\theta\sqrt{N_1N_0\theta}}{\lb_1\sqrt{\lb_2}}\leq 0, \,\,\,\quad\quad {\rm if} \,\,
	\bar{b}_{2,j_2}=-\alpha,$$
	which together with (\ref{eq:setcon}) implies that for all $n=1,\ldots,N$, it holds
	\begin{equation}\label{eq:setb1b2}	
		(\bar{W} x_n+\bar{b}_{1})_{+}=(Wx_n+b_1)_{+}\,\,  {\rm and} \,\, (\bar{W}\zz \bar{v}_n+\bar{b}_{2})_{+}=(W\zz v_n+b_2)_{+}.
	\end{equation}
	Combining with $\bar{W}=W$ and $\bar{V}=V$, we have
	$\mathcal{O}(\bar{ z})=\mathcal{O}(z).$
	Moreover~\eqref{eq:setb1b2}, $\bar{W}=W$ and $\bar{V}=V$ yield $\bar{ z}\in \Omega_2$.
	Hence by the definition of $\bar{z}$,  $\bar{ z}=\mathrm{Proj}_{\Omega_3}(z)\in \Omega_2\cap  \Omega_3={\cal Z}$.
	
	Now we prove the last statement. By parts (a) and  (b), the set ${\cal Z}\cap\Omega_\theta$
	is a bounded closed set. Hence, there exists $ z^*\in {\cal Z}^*$  such that
	$
	\mathcal{O}(z^*)=\min_{ z\in\mathcal{Z}}\mathcal{O}(z) \le \theta.
	$
	Assume on contradiction that $z^*\in {\cal Z}^*$, but $z^*\not\in \mathcal{S}$. Then there exists $\widetilde{ z}\in\Omega_2$ such that $\mathcal{O}(\widetilde{ z})<\mathcal{O}(z^*)\leq\theta$.
	As we have proved in $($b$)$, $\bar{ z}=$Proj$_{\Omega_3}(\widetilde{ z})\in \mathcal{Z}$ and
	$
	\mathcal{O}(\bar{ z})=\mathcal{O}(\widetilde{ z}),
	$
	which implies $\mathcal{O}(\bar{ z})<\mathcal{O}(z^*)$. This is a contradiction.
	Hence ${\cal Z}^*\subset {\cal S}$.
\end{proof}

The following theorem shows that~\eqref{eq:reguauto4} is an exact penalty formulation of~\eqref{eq:reguauto2} regarding global minimizers if the penalty parameter $\beta$ in ${\cal P}$ is larger than a computable number.

\begin{theorem}
	The following statements hold.
	\begin{description}
		\item{(a)} The functions  $\mathcal{F}$ and $\mathcal{R}$ are Lipschitz continuous over $\Omega_{\theta}$.
		\item{(b)} Let  $L_{\mathcal{F}}$ and $L_{\mathcal{R}}$ be Lipschitz modulus of $\mathcal{F}$ and $\mathcal{R}$ over $\Omega_{\theta}$, respectively. Suppose $\beta>L_{\mathcal{F}}$+$L_{\mathcal{R}}$.
		If \(\bar{ z}\in\Omega_{\theta}\) is a global minimizer  of~\eqref{eq:reguauto2},
		then \(\bar{ z}\) is also a global minimizer of~\eqref{eq:reguauto4}.
		\item{(c)} Let $\delta:=3 \theta+\frac{2 N\theta^{3}}{ \lb_1^2\lambda_{2}}$ and $\Omega_\delta=\{ z \in \Omega_2 :   \mathcal{O}(z)\leq \delta\}$. Let $L_{\mathcal{F}}$ and $L_{\mathcal{R}}$ be Lipschitz modulus of $\mathcal{F}$ and $\mathcal{R}$ over $\Omega_{\delta}$ respectively. Suppose $\beta>L_{\mathcal{F}}$+$L_{\mathcal{R}}$. 	If \(\bar{ z}\in\Omega_{\theta}\) is a global minimizer of~\eqref{eq:reguauto4},
		then \(\bar{ z}\) is also a global minimizer of~\eqref{eq:reguauto2}.
	\end{description}
\end{theorem}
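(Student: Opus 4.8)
The plan is to handle the three parts as a single exact-penalty package built around one construction. Given any feasible $z=(\mathrm{vec}(W)\zz,b\zz,\mathrm{vec}(V)\zz)\zz\in\Omega_2$, I form its \emph{feasibility projection} $\hat z$ onto $\Omega_1$ by lowering only the auxiliary block, i.e. $\hat v_n=(Wx_n+b_1)_+$ with $W$ and $b$ unchanged. Two elementary facts drive everything. First, $z$ and $\hat z$ differ only in $V$, so $\|z-\hat z\|_2=\|V-\hat V\|_F$. Second, because $z\in\Omega_2$ forces $v_n\ge\hat v_n\ge 0$ entrywise, the penalty collapses to $\mathcal P(z)=\beta\sum_{n}e\zz(v_n-\hat v_n)=\beta\|V-\hat V\|_1\ge\beta\|z-\hat z\|_2$. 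Thus the penalty already dominates $\|z-\hat z\|_2$ with slope $\beta$, and the whole game is to beat the variation of $\mathcal F+\mathcal R$, whose slope is at most $L_{\mathcal F}+L_{\mathcal R}<\beta$.

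For part (a) I would not invoke convexity of the level set (it is neither bounded nor convex, since $b$ is unbounded below on $\Omega_\theta$), but estimate $|\mathcal F(z)-\mathcal F(z')|$ directly. Writing $h_n(z)=(W\zz v_n+b_2)_+$, the bound $\mathcal F(z)\le\theta$ from Theorem~\ref{thm:nonem} gives $\|h_n(z)-x_n\|_2\le\sqrt{N\theta}$ uniformly on $\Omega_\theta$; combining $\big|\|p-x\|_2^2-\|q-x\|_2^2\big|\le(\|p-x\|_2+\|q-x\|_2)\|p-q\|_2$ with the $1$-Lipschitz property of the ReLU and the boundedness of $\|W\|_F,\|V\|_1$ on $\Omega_\theta$ yields a uniform modulus $L_{\mathcal F}$. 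The term $\mathcal R$ is smooth with gradient $2\lb_2 W$, bounded on the convex set $\{\|W\|_F\le\sqrt{\theta/\lb_2}\}$, giving $L_{\mathcal R}$. The identical estimate over $\Omega_\delta$ supplies the moduli used in part (c).

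For part (b), I would reduce to competitors $z\in\Omega_2$ with $\mathcal O(z)\le\mathcal O(\bar z)\le\theta$ (anything larger cannot beat $\bar z$), form $\hat z\in\Omega_1$ as above, and chain
\[
\mathcal O(z)-\mathcal O(\hat z)=\big(\mathcal F(z)-\mathcal F(\hat z)\big)+\big(\mathcal R(z)-\mathcal R(\hat z)\big)+\mathcal P(z)\ge\big(\beta-L_{\mathcal F}-L_{\mathcal R}\big)\|z-\hat z\|_2\ge 0,
\]
so $\mathcal O(z)\ge\mathcal O(\hat z)=\mathcal F(\hat z)+\mathcal R(\hat z)\ge\mathcal F(\bar z)+\mathcal R(\bar z)=\mathcal O(\bar z)$, the last inequality because $\hat z\in\Omega_1$ and $\bar z$ minimizes \eqref{eq:reguauto2}. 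Part (c) runs the same estimate on $\bar z$ itself: since $\bar z$ minimizes \eqref{eq:reguauto4}, $\mathcal O(\bar z)\le\mathcal O(\hat z)$, while the displayed inequality gives $\mathcal O(\bar z)-\mathcal O(\hat z)\ge(\beta-L_{\mathcal F}-L_{\mathcal R})\|V_{\bar z}-\hat V\|_F\ge 0$; the strict gap $\beta-L_{\mathcal F}-L_{\mathcal R}>0$ forces $V_{\bar z}=\hat V$, i.e. $\bar z\in\Omega_1$, after which optimality of $\bar z$ for \eqref{eq:reguauto4} over $\Omega_1\subset\Omega_2$ transfers to \eqref{eq:reguauto2}.

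The main obstacle is bookkeeping on which level set the moduli must be valid. In the displayed chain the Lipschitz bound is applied to the pair $(z,\hat z)$ (respectively $(\bar z,\hat z)$), and although $z,\bar z\in\Omega_\theta$, the projection $\hat z$ need not lie in $\Omega_\theta$: lowering $V$ decreases $\mathcal R$ and $\mathcal P$ but can \emph{increase} $\mathcal F$. The essential step is therefore to bound $\mathcal O(\hat z)$ a priori; using $\|h_n(\hat z)-x_n\|_2\le\|W\|_2\|v_n-\hat v_n\|_2+\sqrt{N\theta}$ together with $\|W\|_F\le\sqrt{\theta/\lb_2}$ and $\sum_n\|v_n-\hat v_n\|_1\le\|V\|_1\le\theta/\lb_1$, a short Cauchy--Schwarz and AM--GM computation yields exactly $\mathcal O(\hat z)\le\delta=3\theta+\frac{2N\theta^3}{\lb_1^2\lb_2}$. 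This is precisely why the moduli are measured over the enlarged set $\Omega_\delta$ in part (c), and verifying that this explicit $\delta$ is an honest upper bound for the projected objective is the one place where genuine estimation, rather than the penalty mechanism, is required.
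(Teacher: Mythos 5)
Your proposal is correct in substance and rests on the same exact-penalty mechanism as the paper (the feasibility projection $\hat z$ onto $\Omega_1$ obtained by lowering only $V$, the identity $\mathcal P(z)=\beta\|V-\hat V\|_1\ge\beta\|z-\hat z\|_2$, and the comparison against the Lipschitz slope of $\mathcal F+\mathcal R$), but the execution of (a) and (c) is genuinely different. For (a), the paper does not estimate $\mathcal F$ directly: it composes with $\mathrm{Proj}_{\Omega_3}$, which by Theorem~\ref{thm:nonem}(b) preserves $\mathcal O$ (and hence $\mathcal F$) and is $1$-Lipschitz, thereby reducing to the bounded set $\mathcal Z\cap\Omega_\theta$; your direct estimate via $\|h_n(z)-x_n\|_2\le\sqrt{N\theta}$, the $1$-Lipschitz ReLU, and the bounds on $\|W\|_F$ and $\|V\|_1$ is valid (note $b$ enters $h_n$ only through a $1$-Lipschitz map, so the unboundedness of $b$ below is harmless) and yields an explicit modulus. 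For (c), the paper freezes $(\bar W,\bar b)$, reduces to a minimization in $V$ over the sets $\Omega_4,\Omega_5$, and invokes a Hoffman-type error bound together with the penalty lemma of Chen--Lu--Pong; your two-point argument --- $\mathcal O(\bar z)\le\mathcal O(\hat z)$ by global optimality versus $\mathcal O(\bar z)-\mathcal O(\hat z)\ge(\beta-L_{\mathcal F}-L_{\mathcal R})\|\bar z-\hat z\|_2$ by the penalty bound, forcing $\bar z=\hat z\in\Omega_1$ --- is more elementary and avoids the cited machinery entirely. Your a priori estimate $\mathcal O(\hat z)\le 3\theta+\frac{2\theta^3}{N\lb_1^2\lb_2}\le\delta$ checks out and is exactly the role played by the paper's computation $\zeta_2(\hat V)\le\delta$; you correctly identify this as the one place where genuine estimation is needed.

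The one residual issue is in part (b), and you have named it yourself: the displayed chain applies the moduli $L_{\mathcal F},L_{\mathcal R}$ to the pair $(z,\hat z)$, but part (b) only grants moduli over $\Omega_\theta$ and $\hat z$ need not lie in $\Omega_\theta$ (your $\delta$-bound repair is only licensed by the hypotheses of part (c)). Be aware that the paper's own proof of (b) is equally terse on this point, so you are not missing an idea the paper supplies; but to close (b) under its stated hypotheses you should run the estimate along the segment $z(t)=(1-t)z+t\hat z$, which stays in $\Omega_2$, along which $\mathcal P$ and $\mathcal R$ decrease linearly and the rate $\beta\|z-\hat z\|_2$ of decrease of $\mathcal P$ dominates the rate $L_{\mathcal F}\|z-\hat z\|_2$ at which $\mathcal F$ can grow; a continuation argument then keeps the whole segment inside $\Omega_\theta$ and delivers $\mathcal O(\hat z)\le\mathcal O(z)-(\beta-L_{\mathcal F}-L_{\mathcal R})\|z-\hat z\|_2$ using only the moduli over $\Omega_\theta$.
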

\begin{proof}
	(a) From Theorem \ref{thm:nonem} (a), it is clear that $\mathcal{R}$ is Lipschitz continuous over  $\Omega_{\theta}$.  From  Theorem \ref{thm:nonem} (a)-(b), the set ${\cal Z}\cap \Omega_\theta$ is bounded.  Suppose that $L_{\mathcal{F}}$ is the Lipschitz constant of $\mathcal{F}$ over ${\cal Z}\cap \Omega_{\theta}$. Let $z_1, z_2\in\Omega_{\theta}$. It follows~Theorem \ref{thm:nonem} (b) that {$\mathcal{F}(\mathrm{Proj}_{\Omega_3}(z_1))=\mathcal{F}(z_1)$, $\mathcal{F}(\mathrm{Proj}_{\Omega_3}(z_2))=\mathcal{F}(z_2)$, and $\mathrm{Proj}_{\Omega_3}(z_1), \mathrm{Proj}_{\Omega_3}(z_2)\in{\cal Z}$.  From ${\cal Z}=\Omega_2\cap \Omega_3$,  we have $\mathrm{Proj}_{\Omega_3}(z_1), \mathrm{Proj}_{\Omega_3}(z_2)\in\Omega_{\theta}$. Hence, it holds that
		\begin{eqnarray*}
			\|\mathcal{F}(z_1)-\mathcal{F}(z_2)\|_2&=& \|\mathcal{F}(\mathrm{Proj}_{\Omega_3}(z_1))-\mathcal{F}(\mathrm{Proj}_{\Omega_3}(z_2))\|_2\\
			&\leq& L_{\mathcal{F}}\|\mathrm{Proj}_{\Omega_3}(z_1)-\mathrm{Proj}_{\Omega_3}(z_2)\|_2\\
			&\leq& L_{\mathcal{F}}\|z_1-z_2\|_2,
	\end{eqnarray*}}
	where the last inequality is from that $\Omega_3$ is a convex set and the projection is Lipschitz continuous with Lipschitz constant  1. Hence we derive that $\mathcal{F}$ is Lipschitz continuous over $\Omega_{\theta}$ with the Lipschitz constant $L_{\mathcal{F}}$.
	
	(b)
	We first prove that
	$
	\beta \dist(z,\Omega_1)\leq {\cal P}(z)$
	for all $z\in\Omega_2$.
	
	For  $z\in \Omega_2$, 		let $\widetilde{ z}=(\mathrm{vec}(W)\zz, b\zz,\mathrm{vec}(\widetilde{V})\zz)\zz$ with
	$\widetilde{v}_n=({W}x_n+{b}_1)_{+}$ for all $n=1,2,\ldots,N$.
	Then, we have $\widetilde{ z}\in \Omega_1$, $\widetilde{v}_n\leq {v}_n$, and
	$$ \dist({ z},\Omega_1)\leq \|z-\widetilde{ z}\|_2\leq \|{\rm vec}(V-\widetilde{ V})\|_2\leq\sum_{n=1}^N\|{v}_n-({W}x_n+{b}_1)_{+}\|_1=\frac{1}{\beta} {\cal P}(z),$$
	where the last inequality comes from the definition of $\widetilde{ v}_n$ and $\|\cdot\|_2\le \|\cdot\|_1$,  and the equality is from $z\in \Omega_2$.
	
	Since $\beta > L_{\mathcal{F}}+L_{\mathcal{R}}$, ${\cal P}(z)=0$ for all $z\in \Omega_1$,
	$\Omega_1\subset \Omega_2$, and $\Omega_\theta\subset \Omega_2$, we have
	\begin{eqnarray*}
		\min_{z\in \Omega_\theta} \mathcal{F}(z)+\mathcal{R}(z) + \mathcal{P}(z)
		&\ge&\min_{z\in \Omega_1\cap\Omega_\theta }\mathcal{F}(z)+\mathcal{R}(z) \\
		&=&\min_{z\in \Omega_1\cap\Omega_\theta} \mathcal{F}(z)+\mathcal{R}(z) + \mathcal{P}(z)\\
		&\ge&\min_{z\in \Omega_\theta} \mathcal{F}(z)+\mathcal{R}(z) + \mathcal{P}(z).
	\end{eqnarray*}
	Hence we obtain the statement (b).
	
	(c) Let $\bar{ z}\in\mathcal{S}$ and $\zeta_2(V)=\frac{1}{N}\sum_{n=1}^N\left\|\left(\bar{W}\zz v_n+\bar{b}_2\right)_{+}-x_n\right\|_2^2+\lb_1\sum_{n=1}^Ne\zz v_n.
	$ By the definition of $\bar{V}$,  $\bar{V}$ is a global minimizer of
	$$
	\min_{V\in \Omega_4}\zeta_1(V):=\zeta_2(V)+\beta\sum_{n=1}^N\left\|v_n-(\bar{W}x_n+\bar{b}_1)_{+}\right\|_1,
	$$
	where $\Omega_4=\left\{V:v_n\geq(\bar{W}x_n+\bar{b}_1)_{+}, n=1,\ldots,N, \zeta_1(V)\leq \delta\right\}$.

	It follows from $\mathcal{O}(\bar{ z})\leq \mathcal{O}(0)< \theta$ that
	$
	\sum_{n=1}^N\left\|(\bar{ W}\zz \bar{v}_n+\bar{ b}_2)_{+}-x_n\right\|_2^2\leq N\theta,\,\,
	\lb_2\|\bar{W}\|_F^2\leq \theta,
	\lb_1\sum_{n=1}^Ne\zz \bar{v}_n \leq \theta$, and $
	\beta\sum_{n=1}^Ne\zz r_n\leq \theta,
	$ where $r_n=\bar{v}_n-(\bar{W} x_n+\bar{b}_{1})_{+}\geq 0$ for $n=1,2,\ldots,N$.
	Let
	$$
	\Omega_5=\left\{V:v_n=(\bar{W}x_n+\bar{b}_1)_{+},\,  n=1,\ldots,N,\, \zeta_2(V)\leq \delta\right\}
	$$
	and
	$\hat{v}_n=(\bar{W}x_n+\bar{b}_1)_{+}$ for all $n=1,\ldots,N$.
	We show that $\hat{V}\in\Omega_5$ as follows.
	
	\begin{align*}
		\zeta_2(\hat{V})&= \frac{1}{N}\sum_{n=1}^N\sum_{j=1}^{N_0}\left|\left(\bar{W}_{\cdot,j}\zz \left(\bar{v}_n-r_n\right)+\bar{b}_{2,j}\right)_{+}-X_{j,n}\right|^2+\lb_1\sum_{n=1}^Ne\zz \left(\bar{v}_n-r_n\right)\\
		&\leq \theta+\frac{1}{N}\sum_{n=1}^N\sum_{j=1}^{N_0}\left(\left|X_{j,n}-\left(\bar{W}_{\cdot,j}\zz \bar{v}_n+\bar{b}_{2,j}\right)_{+}\right|+(\bar{W}_{\cdot,j}\zz r_n)_{+}\right)^2\\
		&\leq \theta+\frac{2}{N}\sum_{n=1}^N\sum_{j=1}^{N_0}\left(\left|\left(\bar{W}_{\cdot,j}\zz \bar{v}_n+\bar{b}_{2,j}\right)_{+}-X_{j,n}\right|^2+(\bar{W}_{\cdot,j}\zz r_n)^2_{+}\right)\\ 
		&\leq \theta+2\theta+\frac{2}{N}\sum_{n=1}^N\sum_{j=1}^{N_0}(\bar{W}_{\cdot,j}\zz r_n)^2_{+}\leq 3\theta+\frac{2}{N}\sum_{n=1}^N\sum_{j=1}^{N_0}(\bar{W}_{\cdot,j}\zz r_n)^2\\
		&
		\leq 3\theta+\frac{2}{N}\|\bar{W}\|_F^2\left(\sum_{n=1}^Ne\zz r_n\right)^2 \leq \delta,
	\end{align*}
	where the first inequality comes from $ |(a_1+a_2)_{+}+a_3|\leq|(a_1)_{+}+a_3|+(a_2)_{+}$ with $a_1,a_2,a_3\in\R$, the second last inequality uses the fact $r_n\geq 0$,
	and
	the last inequality is from $\lb_2\|\bar{W}\|_F^2\leq \theta$, $\beta\sum_{n=1}^Ne\zz r_n\leq \theta$, $\beta> L_{\mathcal{R}}\geq \lb_1$, and the definition of $\delta$.
	Hence $\Omega_5$ is nonempty. Obviously $\Omega_5\subset \Omega_4$ and $\{z: W=\bar{W}, b=\bar{b}, V\in \Omega_5\} \subset \Omega_1.$
	
	On the other hand, it is clear that $L_{\mathcal{F}}+L_{\mathcal{R}}$ is also a Lipschitz constant of $\zeta_2(V)$ over $\Omega_4$.
	Besides, we have $\sum_{n=1}^N\|v_n-(\bar{W}x_n+\bar{b}_1)_{+}\|_1\geq \operatorname{dist}\left(z, \Omega_5\right)$ for all $z \in \Omega_4$, which is resulted from~\cite[Proposition 4]{hoffman2003approximate}.
	Together with $\beta>L_{\mathcal{F}}+L_{\mathcal{R}}$
	and~\cite[Lemma 3.1]{chen2016penalty}, we obtain $\bar{V}$ is also a global minimizer of
	$
	\min_{V\in\Omega_5}\zeta_2(V).
	$
	Hence $\bar{ z}\in\Omega_1$. From $\Omega_1\subset \Omega_2$, we obtain $\bar{ z}\in {\cal S}^*.$  We complete the proof.
\end{proof}


%

The above two theorems show that the solution sets $\mathcal{S}^*$,  $\mathcal{S}$  of problems~\eqref{eq:reguauto2} and~\eqref{eq:reguauto4}  are the same and contain the solution set
${\cal Z}^*$ of~\eqref{eq:reguauto7} that is bounded. The following example shows that
the solution sets $\mathcal{S}^*$ and $\mathcal{S}$ are unbounded for some data set $X$.

{\bf Example 2.1}	Let $ z^*$ be a global minimizer of problem~\eqref{eq:reguauto2} with
\begin{equation*}
	\begin{matrix}
		X=(x_1,x_2)=\begin{bmatrix}
			0&0\\1&2
		\end{bmatrix}\in\R^{2\times 2},\,\,W=\begin{bmatrix}
			w_1,w_2
		\end{bmatrix}\in\R^{1\times 2},\,\, b_1 \in \R,  \,\, b_2=\begin{bmatrix}
			b_{2,1}\\b_{2,2}
		\end{bmatrix}\in\R^{2}.
	\end{matrix}
\end{equation*}

We set ${\widehat{z}=(\mathrm{vec}(W^*)\zz, \hat{b}\zz,\mathrm{vec}(V^*)\zz)\zz\in\R^{N_2}}$
with  $\hat{b}_1=b^*_1$,  $\hat{b}_{2,2}=b_{2,2}^*$ and $\hat{b}_{2,1}\leq\min\{-w_1^*(w^*_2+b^*_1)_{+}, -w^*_1(2w^*_2+b^*_1)_{+}\}$.

From $\hat{b}_2 \in \argmin_{b_2}\sum_{i=n}^{2}\left\|((W^*)\zz (W^*x_n+b_1^*)_++b_2)_{+}-x_n\right\|_2^2$,
we can verify that $\widehat{z}$ is also a global minimizer of~\eqref{eq:reguauto2}.  Hence $\mathcal{S}^*$ is unbounded, and  the solution set $\mathcal{S}$ of problem~\eqref{eq:reguauto4} with any large $\beta>0 $
is also unbounded.

By the similar argument, we can claim the following relationships among the local minimizers of~\eqref{eq:reguauto2},~\eqref{eq:reguauto4} and~\eqref{eq:reguauto7}.
\begin{corollary}
	Let  $L_{\mathcal{F}}$ and $L_{\mathcal{R}}$ be Lipschitz modulus of $\mathcal{F}$ and $\mathcal{R}$ over $\Omega_{\theta}$ respectively. Suppose $\beta>L_{\mathcal{F}}$+$L_{\mathcal{R}}$.
	If $\bar{ z}\in\Omega_{\theta}$ is a local minimizer of~\eqref{eq:reguauto2} or \eqref{eq:reguauto7},
	then $\bar{ z}$ is also a local minimizer of~\eqref{eq:reguauto4}.
	If $\bar{ z}\in\Omega_{\theta}\cap \Omega_3$
	is a local minimizer  of~\eqref{eq:reguauto4},
	then $\bar{ z}$ is also a local minimizer of~\eqref{eq:reguauto7}.
\end{corollary}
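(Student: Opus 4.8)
The plan is to treat the statement as three separate implications and to localize, one at a time, the global results already established in Theorem~\ref{thm:nonem} and in the exact-penalty theorem preceding this corollary. Two of the three implications are short. The implication \eqref{eq:reguauto4}$\,\to\,$\eqref{eq:reguauto7} is immediate: since $\mathcal{Z}=\Omega_2\cap\Omega_3\subseteq\Omega_2$ and $\bar z\in\Omega_\theta\cap\Omega_3\subseteq\mathcal{Z}$, a point that minimizes $\mathcal{O}$ over a neighborhood $\mathcal{B}_\epsilon(\bar z)\cap\Omega_2$ a fortiori minimizes $\mathcal{O}$ over the smaller set $\mathcal{B}_\epsilon(\bar z)\cap\mathcal{Z}$, which is exactly local minimality for \eqref{eq:reguauto7}.

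For the implication \eqref{eq:reguauto2}$\,\to\,$\eqref{eq:reguauto4}, I would reuse the two ingredients isolated in the proof of the exact-penalty theorem: the bound $\beta\,\dist(z,\Omega_1)\le\mathcal{P}(z)$ valid on $\Omega_2$, realized by the explicit feasible point $\widetilde z$ obtained from $z$ by replacing each $v_n$ with $(Wx_n+b_1)_+$ (so that $\widetilde z\in\Omega_1$, $\|z-\widetilde z\|_2\le\frac{1}{\beta}\mathcal{P}(z)$, and $\mathcal{P}(\widetilde z)=0$), together with the Lipschitz continuity of $\mathcal{F}+\mathcal{R}$. Given a local minimizer $\bar z\in\Omega_1\cap\Omega_\theta$ of \eqref{eq:reguauto2} and any $z\in\Omega_2$ near $\bar z$, I would combine these as
\[
\mathcal{O}(z)=\mathcal{F}(z)+\mathcal{R}(z)+\mathcal{P}(z)\ge \mathcal{O}(\widetilde z)+(\beta-L_{\mathcal{F}}-L_{\mathcal{R}})\|z-\widetilde z\|_2\ge \mathcal{O}(\widetilde z),
\]
using $\mathcal{P}(z)\ge\beta\|z-\widetilde z\|_2$, the Lipschitz estimate on the pair $z,\widetilde z$, and the strict inequality $\beta>L_{\mathcal{F}}+L_{\mathcal{R}}$. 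Since $\mathcal{P}$ is continuous and vanishes at $\bar z$, we have $\widetilde z\to\bar z$ as $z\to\bar z$, so for $z$ close enough $\widetilde z$ lies in the neighborhood on which $\bar z$ is optimal for \eqref{eq:reguauto2}; as $\widetilde z\in\Omega_1$ this gives $\mathcal{O}(\widetilde z)=\mathcal{F}(\widetilde z)+\mathcal{R}(\widetilde z)\ge\mathcal{O}(\bar z)$, and hence $\mathcal{O}(z)\ge\mathcal{O}(\bar z)$.

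The implication \eqref{eq:reguauto7}$\,\to\,$\eqref{eq:reguauto4} I would handle through the projection onto $\Omega_3$ supplied by Theorem~\ref{thm:nonem}(b). Let $\bar z\in\Omega_\theta$ be a local minimizer of \eqref{eq:reguauto7}, so in particular $\bar z\in\mathcal{Z}\subseteq\Omega_3$ and $\mathrm{Proj}_{\Omega_3}(\bar z)=\bar z$. For $z\in\Omega_2$ near $\bar z$ I would split on the level set: if $\mathcal{O}(z)>\theta$ then $\mathcal{O}(z)>\theta\ge\mathcal{O}(\bar z)$ and there is nothing to prove; otherwise $z\in\Omega_\theta$, and Theorem~\ref{thm:nonem}(b) yields $z':=\mathrm{Proj}_{\Omega_3}(z)\in\mathcal{Z}$ with $\mathcal{O}(z')=\mathcal{O}(z)$. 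Because the projection is nonexpansive and fixes $\bar z$, $\|z'-\bar z\|_2\le\|z-\bar z\|_2$, so $z'$ lies in the same neighborhood and local minimality over $\mathcal{Z}$ gives $\mathcal{O}(z)=\mathcal{O}(z')\ge\mathcal{O}(\bar z)$.

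The one delicate point, and the step I expect to require the most care, is the bookkeeping of neighborhoods relative to the level set $\Omega_\theta$, on which the Lipschitz moduli $L_{\mathcal{F}},L_{\mathcal{R}}$ are defined. The set $\Omega_\theta$ is unbounded in the directions making $b$ very negative, so one cannot simply assume that a whole ball around $\bar z$ stays inside it; the dichotomy ``$\mathcal{O}(z)>\theta$ versus $z\in\Omega_\theta$'' is what lets me invoke the fixed moduli only where they are valid and dispose of the rest trivially via $\mathcal{O}(\bar z)\le\theta$. The residual subtlety is that the auxiliary point $\widetilde z$ in the first implication must also lie where the modulus applies; since $\widetilde z\to\bar z$ and $\mathcal{O}$ is continuous this holds once $\mathcal{O}(\bar z)<\theta$, while the boundary case $\mathcal{O}(\bar z)=\theta$ is absorbed by the strict gap $\beta-L_{\mathcal{F}}-L_{\mathcal{R}}>0$, which leaves room for the slightly larger local Lipschitz modulus on a small ball around $\bar z$.
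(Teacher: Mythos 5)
Your proposal is correct and follows exactly the route the paper intends: the paper gives no explicit proof of this corollary, saying only ``by the similar argument,'' and your three implications localize precisely the ingredients of the preceding results --- the set inclusion $\mathcal{Z}\subseteq\Omega_2$ for (RP)$\to$(LRP), the distance bound $\beta\,\dist(z,\Omega_1)\le\mathcal{P}(z)$ together with the Lipschitz moduli for (R)$\to$(RP), and the projection identity of Theorem~\ref{thm:nonem}(b) with nonexpansiveness for (LRP)$\to$(RP). Your closing discussion of the level-set bookkeeping (including the boundary case $\mathcal{O}(\bar z)=\theta$) is the only point the paper leaves implicit, and your treatment of it is adequate.
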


\subsection{Stationary Points}

In this subsection, we investigate the relationships among the stationary points of problems~\eqref{eq:reguauto2},~\eqref{eq:reguauto4}  and~\eqref{eq:reguauto7}.

From ${\cal Z}=\Omega_2\cap \Omega_3$, we have $\{z:z\in\mathcal{Z},\,\mathcal{O}(z)\leq \theta\}\subset \Omega_\theta.$

\begin{theorem}\label{thm:dsta}
	Let $L_{\mathcal{F}}$ and $L_{\mathcal{R}}$
	be the Lipschitz modulus of $\mathcal{F}$ and $\mathcal{R}$ over $\Omega_\theta.$
	Suppose $\beta>L_{\mathcal{F}}+L_{\mathcal{R}}$. If $\bar{z}\in{\cal Z} $ with $\mathcal{O}(\bar{z})< \theta$
	is a d-stationary point of~\eqref{eq:reguauto7}, then $\bar{z}\in\Omega_1$
	is a d-stationary point of \eqref{eq:reguauto4} and \eqref{eq:reguauto2}.
\end{theorem}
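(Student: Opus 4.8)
The plan is to prove the chain $\bar z$ is d-stationary for~\eqref{eq:reguauto7} $\Rightarrow$ $\bar z$ is d-stationary for~\eqref{eq:reguauto4} $\Rightarrow$ $\bar z\in\Omega_1$ $\Rightarrow$ $\bar z$ is d-stationary for~\eqref{eq:reguauto2}. First I would record the geometric consequences of the \emph{strict} inequality $\mathcal{O}(\bar z)<\theta$. Since $\bar z\in\mathcal{Z}\subset\Omega_2$, we have $\bar z\in\Omega_\theta$, so the bounds of Theorem~\ref{thm:nonem}(a) apply; retracing their derivation with the strict bound $\lambda_1\sum_n e\zz\bar v_n<\theta$ shows that the upper constraints $b\le\alpha e$ defining $\Omega_3$ are strictly inactive at $\bar z$. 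For every index with an active lower bound, say $\bar b_{1,j}=-\alpha$, inequality~\eqref{eq:set2} together with the first term in~\eqref{def:alpha} gives $\bar W_{j,\cdot}x_n+\bar b_{1,j}\le \sqrt{N_1N_0\theta/\lambda_2}\,\|X\|_1-\alpha\le -\theta/\lambda_1<0$ for all $n$; similarly $\bar b_{2,j}=-\alpha$ forces $\bar W_{\cdot,j}\zz\bar v_n+\bar b_{2,j}<0$ via~\eqref{eq:set4} and the second term in~\eqref{def:alpha}. Hence the corresponding ReLU components sit strictly in the flat branch at $\bar z$, so their directional derivatives vanish in every direction.

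For the first implication I would exploit that $\Omega_2=\{z: v_n\ge 0,\ v_n\ge Wx_n+b_1\}$ and $\Omega_3$ are polyhedral, so $\mathcal{T}_{\mathcal{Z}}(\bar z)=\mathcal{T}_{\Omega_2}(\bar z)\cap\mathcal{T}_{\Omega_3}(\bar z)$, and, the upper constraints being inactive, $\mathcal{T}_{\Omega_3}(\bar z)=\{d: d_{b_{i,j}}\ge0\text{ whenever }\bar b_{i,j}=-\alpha\}$. Given any $d\in\mathcal{T}_{\Omega_2}(\bar z)$, let $d'$ coincide with $d$ except that its bias components at the active lower bounds are reset to $0$. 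The crucial point is that $b_1$ enters $\mathcal{O}$ only through $\mathcal{P}$ and $b_2$ only through $\mathcal{F}$, and precisely through the ReLU terms shown above to lie strictly in the flat branch; consequently $\mathcal{O}'(\bar z;\cdot)$ is independent of those bias components, giving $\mathcal{O}'(\bar z;d')=\mathcal{O}'(\bar z;d)$. Moreover the only constraints of $\Omega_2$ involving $b_{1,j}$ are $v_{n,j}\ge W_{j,\cdot}x_n+b_{1,j}$, strictly inactive at such $j$, so the reset keeps $d'\in\mathcal{T}_{\Omega_2}(\bar z)$; since also $d'_{b}=0\ge0$ at the active lower bounds, $d'\in\mathcal{T}_{\mathcal{Z}}(\bar z)$. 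Then~\eqref{dLRP} yields $\mathcal{O}'(\bar z;d)=\mathcal{O}'(\bar z;d')\ge0$, which is exactly~\eqref{dRP}.

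To obtain $\bar z\in\Omega_1$, set $r_n=\bar v_n-(\bar Wx_n+\bar b_1)_+\ge 0$ and suppose $r_{n,j}>0$ for some $n,j$. Then both $v_{n,j}\ge0$ and $v_{n,j}\ge W_{j,\cdot}x_n+b_{1,j}$ are strictly inactive at $\bar z$, so the direction $d$ decreasing only the $v_{n,j}$-coordinate lies in $\mathcal{T}_{\Omega_2}(\bar z)$. Since $\mathcal{R}$ is independent of $V$, $v_{n,j}$ contributes the linear term $\beta v_{n,j}$ to $\mathcal{P}$ and otherwise enters only $\mathcal{F}$; using $\mathcal{O}(\bar z)<\theta$ to keep the short segment inside $\Omega_\theta$ and the Lipschitz modulus $L_{\mathcal{F}}$ of $\mathcal{F}$ over $\Omega_\theta$ to bound $|\mathcal{F}'(\bar z;d)|\le L_{\mathcal{F}}\|d\|_2$, I obtain $\mathcal{O}'(\bar z;d)\le L_{\mathcal{F}}-\beta<0$ (as $\beta>L_{\mathcal{F}}+L_{\mathcal{R}}\ge L_{\mathcal{F}}$), contradicting~\eqref{dRP}. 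Hence $r_n=0$ for all $n$ and $\bar z\in\Omega_1$.

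Finally, for~\eqref{eq:reguauto2}: because $\Omega_1\subset\Omega_2$ we have $\mathcal{T}_{\Omega_1}(\bar z)\subset\mathcal{T}_{\Omega_2}(\bar z)$, whence $\mathcal{O}'(\bar z;d)\ge0$ for all $d\in\mathcal{T}_{\Omega_1}(\bar z)$ by~\eqref{dRP}. It then suffices to show $\mathcal{P}'(\bar z;d)=0$ on $\mathcal{T}_{\Omega_1}(\bar z)$, for then $\mathcal{O}'(\bar z;d)=\mathcal{F}'(\bar z;d)+\nabla\mathcal{R}(\bar z)\zz d\ge0$, i.e.~\eqref{dR}. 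Writing $\mathcal{P}(z)=\beta\sum_{n}\sum_{j}\big(v_{n,j}-(W_{j,\cdot}x_n+b_{1,j})_+\big)$, a direction tangent to the graph-type set $\Omega_1$ satisfies $d_{v_{n,j}}=\big[(W_{j,\cdot}x_n+b_{1,j})_+\big]'(\bar z;d)$ for every $(n,j)$, so each summand of $\mathcal{P}'(\bar z;d)$ vanishes; I would verify this tangent characterization directly, including at the kinks where $\bar W_{j,\cdot}x_n+\bar b_{1,j}=0$ and both sides reduce to the positive part of the linearized argument. The main obstacle I anticipate is the surgery of the second paragraph, namely showing simultaneously that the reset direction $d'$ stays in $\mathcal{T}_{\mathcal{Z}}(\bar z)$ and that $\mathcal{O}'(\bar z;d')=\mathcal{O}'(\bar z;d)$: both hinge on the strict flatness of the ReLU branches at the active lower bounds, which is exactly where the strict inequality $\mathcal{O}(\bar z)<\theta$ and the specific value of $\alpha$ in~\eqref{def:alpha} are indispensable, with the tangent cone of the nonsmooth set $\Omega_1$ at its kinks being the secondary technical point.
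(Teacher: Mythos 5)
Your proof is correct and follows the same three-part architecture as the paper's (membership in $\Omega_1$ forced by a descent direction in the $V$-block, transfer of d-stationarity from $\mathcal{T}_{\mathcal Z}(\bar z)$ to $\mathcal{T}_{\Omega_2}(\bar z)$, and passage to \eqref{eq:reguauto2} by showing $\mathcal{P}^{\prime}(\bar z;\cdot)=0$ on $\mathcal{T}_{\Omega_1}(\bar z)$), but the middle step is carried out by a genuinely different mechanism. The paper takes $d\in\mathcal{T}_{\Omega_2}(\bar z)$, follows the curve $\zeta(t)=\mathrm{Proj}_{\Omega_3}(\bar z+td)$, invokes Theorem~\ref{thm:nonem}(b) to identify $\mathcal{O}(\zeta(t))$ with $\mathcal{O}(\bar z+td)$, and uses the piecewise linearity of $\zeta$ to replace $d$ by the admissible direction $\frac{1}{t_3}(\zeta(t_3)-\bar z)\in\mathcal{T}_{\mathcal Z}(\bar z)$; you instead perform the clipping on the direction itself, zeroing the bias components at active lower bounds, and justify $\mathcal{O}^{\prime}(\bar z;d^{\prime})=\mathcal{O}^{\prime}(\bar z;d)$ by the strict negativity of the corresponding ReLU arguments --- exactly the facts behind \eqref{eq:setb1b2}, which is where the specific value of $\alpha$ enters in both arguments. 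Your route is more transparent about why the box constraint is locally invisible, at the price of having to verify strict inactivity of the upper bounds $b\le\alpha e$, which the projection argument sidesteps; your closing step (the tangent characterization $d_{v_{n,j}}=[(W_{j,\cdot}x_n+b_{1,j})_+]^{\prime}(\bar z;d)$ on $\mathcal{T}_{\Omega_1}(\bar z)$) is precisely what the paper establishes via the Lipschitz estimate $|\mathcal{P}(\bar z+\tau_k d)-\mathcal{P}(\bar z+\tau_k d^{(k)})|\le L\tau_k\|d-d^{(k)}\|_2$ along an approximating sequence $z^{(k)}\in\Omega_1$, so that verification should be written out rather than asserted. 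One misstatement needs fixing: ``$\mathcal{R}$ is independent of $V$'' is false, since $\mathcal{R}(z)=\lambda_1\sum_{n}e\zz v_n+\lambda_2\|W\|_F^2$. It is harmless here --- the omitted term contributes $-\lambda_1<0$ to $\mathcal{O}^{\prime}(\bar z;d)$ along your descent direction, or can simply be absorbed by bounding $|\mathcal{R}^{\prime}(\bar z;d)|\le L_{\mathcal{R}}$ to get $\mathcal{O}^{\prime}(\bar z;d)\le L_{\mathcal{F}}+L_{\mathcal{R}}-\beta<0$ --- but the sentence as written is incorrect.
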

\begin{proof}
	Firstly, we show $\bar{z}\in\Omega_1$.
	
	Assume on contradiction that $\bar{z}\notin\Omega_1$,
	we construct $\widetilde{ z}=(\mathrm{vec}(\bar{W})\zz, \bar{b}\zz,\mathrm{vec}(\widetilde{V})\zz)\zz$
	with   $\widetilde{v}_n=(\bar{W}x_n+\bar{b}_{1})_{+}$ for {all} $n=1,\ldots,N$.
	It
	{then follows from} $\bar{z}\in \mathcal{Z}$ that $\bar{ z}\in\Omega_2$, which further implies that  $\bar{v}_n\geq(\bar{W}x_n+\bar{b}_{1})_{+}$ for all $n=1,\ldots,N$.
	Hence, we have $\widetilde{V}\le \bar{V}$ and $\widetilde{V}\neq \bar{V}.$
	
	Since $\mathcal{O}(\bar{z})< \theta$ and $\mathcal{O}$ is locally Lipschitz continuous, there exists $t_1\in(0,1]$
	such that $\mathcal{O}(\bar{ z}+t (\widetilde{ z}-\bar{z}))\leq\theta$ for all $0<t<t_1$. Together with $\|\cdot\|_2\leq\|\cdot\|_1$ and the definition of $\bar{ z}$ and $\widetilde{ z}$, we have
	\begin{align*}
		&\mathcal{O}(\bar{ z}+t (\widetilde{ z}-\bar{z}) )-\mathcal{O}(\bar{ z})= \mathcal{F}(\bar{ z}+t (\widetilde{ z}-\bar{z}))-\mathcal{F}(\bar{ z})\\&\quad\quad\quad+\mathcal{R}(\bar{ z}+t (\widetilde{ z}-\bar{z}))-\mathcal{R}(\bar{ z})+\beta\sum_{n=1}^Ne\zz  (\bar{ v}_n+t (\widetilde{ v}_n-\bar{v}_n))-\beta\sum_{n=1}^Ne\zz  \bar{ v}_n\\
		\leq& t(L_{\mathcal{F}}+L_{\mathcal{R}})\| \bar{z}-\widetilde{ z}\|_2+t\beta\sum_{n=1}^Ne\zz  (\widetilde{ v}_n-\bar{v}_n)
		\leq t(\beta-(L_{\mathcal{F}}+L_{\mathcal{R}}))\sum_{n=1}^Ne\zz  (\widetilde{ v}_n-\bar{v}_n).
	\end{align*}
	
	{Together} with $\beta>L_{\mathcal{F}}+L_{\mathcal{R}}$, $\widetilde{V}\neq \bar{V}$, and $\widetilde{v}_n\leq \bar{v}_n$ for {all} $n=1,\ldots,N$,
	{we arrive at }
	\begin{align}\label{eq:ozzleq}
		\mathcal{O}^{\prime}(\bar{z}; \widetilde{ z}-\bar{z})
		\leq(\beta-(L_{\mathcal{F}}+L_{\mathcal{R}}))\sum_{n=1}^Ne\zz  (\widetilde{ v}_n-\bar{v}_n)<0.
	\end{align}
	On the other hand, it holds that $\mathcal{T}_{\mathcal {Z}}(\bar{z})=\{d : (Ad)_i\le 0, \, i\in {\cal A}\} $ where
	${\cal A}=\{i \in \{1,\ldots, \nu\} : (A\bar{z})_i =c_i\}$. Since $\widetilde{W}=\bar{W}, \widetilde{b}=\bar{b}, \widetilde{V}\le \bar{V}$, we have $(A(\widetilde{z}-\bar{z}))_i \leq 0$, for all
	$i \in {\cal A}$, i.e. $\widetilde{ z}-\bar{z}\in\mathcal{T}_{\mathcal{Z}}(\bar{z})$. This together with~\eqref{eq:ozzleq}  contradicts to that $\bar{z}$ is a d-stationary point of~\eqref{eq:reguauto7} in (\ref{dLRP}), which means  $O^{\prime}\left(\bar{z} ; d\right)\geq 0 $ for all $d\in\mathcal{T}_{\mathcal{Z}}(\bar{z})$.
	{Hence}, we have $ \bar{z}\in\Omega_1$, which implies $\bar{v}_n=(\bar{W}x_n+\bar{b}_{1})_{+}$ {holds for all} $n=1,\ldots,N$.
	
	Secondly, we prove that $\bar{z}$ is a d-stationary point of~\eqref{eq:reguauto4}.
	For any $d\in\mathcal{T}_{\Omega_2}(\bar{z})$, there exists $t_2\in(0,1]$ such that $\bar{z}+t d\in\Omega_2$ and $\mathcal{O}(\bar{z}+t d)<\theta$ for all $0\leq t\leq t_2$, since $\Omega_2$ is a convex set. 
	Together with~Theorem \ref{thm:nonem} (b), we have $\mathrm{Proj}_{\Omega_3}(\bar{z}+t d)\in\mathcal{Z}$ for all $0\leq t\leq t_2$, and
	\begin{equation}\label{eq:oprimed}
		\mathcal{O}^{\prime}(\bar{z}; d)
		=\lim _{ t \downarrow 0} \frac{\mathcal{O}(\bar{z}+t d )-\mathcal{O}(\bar{z})}{t}=\lim _{ t \downarrow 0,t\leq t_2} \frac{\mathcal{O}(\mathrm{Proj}_{\Omega_3}(\bar{z}+t d ))-\mathcal{O}(\bar{z})}{t}.
	\end{equation}
	
	Define a function $\zeta:\R_{+}\mapsto\R^{N_2}$ satisfying $\zeta(t)=\mathrm{Proj}_{\Omega_3}(\bar{z}+t d)$, then $\zeta(t)$ is a piecewise linear function with respect to $t$, due to the explicit formula of Proj$_{\Omega_3}(z)$ for $z \in \Omega_\theta$ (cf. (\ref{eq:setcon})). Hence, there exists $t_3\in(0,t_2]$ such that for all $0<t<t_3$, we have $\zeta(t)=(1-\frac{t}{t_3})\zeta(0)+\frac{t}{t_3}\zeta(t_3)$. 
	Together with~\eqref{eq:oprimed}, $t_3\leq t_2$, $\zeta(0)=\bar{z}$, and $\mathrm{Proj}_{\Omega_3}(\bar{z}+t_3 d)-\bar{z}\in\mathcal{T}_{\mathcal{Z}}(\bar{z})$, we arrive at
	\begin{equation*}
		\begin{aligned}
			&\mathcal{O}^{\prime}(\bar{z}; d)
			=\lim _{ t \downarrow 0} \frac{\mathcal{O}(\bar{z}+\frac{t}{t_3}(\zeta(t_3)-\bar{z}))-\mathcal{O}(\bar{z})}{t}=
			\mathcal{O}^{\prime}\left(\bar{z}; \frac{1}{t_3}(\zeta(t_3)-\bar{z})\right)\geq 0
		\end{aligned}
	\end{equation*}
	for all $d\in\mathcal{T}_{\Omega_2}(\bar{z})$. Hence $\bar{z}$ is a d-stationary point of~\eqref{eq:reguauto4}.

	Finally, we prove  that $\bar{z}$ is a d-stationary point of~\eqref{eq:reguauto2}.
	Since  the difference between the objective functions of~\eqref{eq:reguauto4} and \eqref{eq:reguauto2} is the term ${\cal P}$,
	we only need to prove $\mathcal{P}^{\prime}(\bar{z}; d) = 0$ for all $d\in\mathcal{T}_{\Omega_1}(\bar{z})$, which together with $\Omega_1\subset\Omega_2$ and $\mathcal{O}^{\prime}(\bar{z}; d)\geq 0$ for all $d\in\mathcal{T}_{\Omega_2}(\bar{z})$ yields
	(\ref{dR}).

	For a fixed $d \in {\cal T}_{\Omega_1}(\bar{z}),$  by the definition of ${\cal T}_{\Omega_1}(\bar{z})$, let $\{\tau_k\}$ be  a  sequence of positive numbers with $\tau_k\le \tau_{k-1}$ converging to zero, and $\{z^{(k)}\} \subset \Omega_1 $ a sequence converging to $\bar{ z}$  such that $d=\lim_{ k\rightarrow\infty}d^{(k)}$ with $d^{(k)}=\frac{z^{(k)}-\bar{z}}{\tau_k}$. From $z^{(k)}=
	\bar{z} +\tau_k d^{(k)} \in \Omega_1, $ we have ${\cal P}(\bar{z} +\tau_kd^{(k)})=0$.
	Note that $\bar{z}\in \Omega_1$ implies ${\cal P}(\bar{z})=0$. Hence from the Lipschitz continuity and directional differentiability of ${\cal P}$, we obtain
	\begin{align*}
		\mathcal{P}^{\prime}(\bar{z}; d)&=\lim_{ t \downarrow 0 }\frac{\mathcal{P}(\bar{z}+td)-\mathcal{P}(\bar{z})}{t}=\lim_{ \tau_k \downarrow 0 }\frac{\mathcal{P}(\bar{z}+\tau_kd) - {\cal P}(\bar{z})}{\tau_k}\\
		&=\lim_{ \tau_k \downarrow 0 }\frac{\mathcal{P}(\bar{z}+\tau_kd) - {\cal P}(\bar{z} +\tau_kd^{(k)})}{\tau_k}=0.
	\end{align*}
	Since $d \in {\cal T}_{\Omega_1}(\bar{z})$ is arbitrarily chosen, we complete the proof.
\end{proof}
\begin{theorem}\label{thm:ggdsta}
	Let $L_{\mathcal{F}}$ and $L_{\mathcal{R}}$
	be the Lipschitz modulus of $\mathcal{F}$ and $\mathcal{R}$ over $\Omega_\theta.$
	Suppose $\beta>L_{\mathcal{F}}+L_{\mathcal{R}}$. If $\bar{z}\in\mathcal{Z}$ with $\mathcal{O}(\bar{z})< \theta$
	is a generalized KKT point of~\eqref{eq:reguauto7}, then $\bar{z}\in\Omega_1$ is a generalized d-stationary point of \eqref{eq:reguauto7}.
	In additional, if $\bar{z} \in \mathrm{int}(\Omega_3)$, then $\bar{z}$ is a generalized d-stationary point of \eqref{eq:reguauto4}. Furthermore, if $\mathcal{P}$ is regular at $\bar{ z}$, then $\bar{ z}$
	is a generalized d-stationary point of \eqref{eq:reguauto2}.
\end{theorem}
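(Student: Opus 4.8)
The plan is to establish the three assertions successively, exploiting the polyhedral structure of $\mathcal{Z}$ and the tangent-cone formula already used in the proof of Theorem~\ref{thm:dsta}. \textbf{Step 1 (generalized d-stationarity of \eqref{eq:reguauto7}).} From the generalized KKT system I would first extract the single subgradient $g:=-A\zz\bar{\gamma}\in\partial\mathcal{O}(\bar{z})$. Because $\mathcal{O}$ is locally Lipschitz, $\mathcal{O}^{\circ}(\bar{z};\cdot)$ is the support function of $\partial\mathcal{O}(\bar{z})$, so $\mathcal{O}^{\circ}(\bar{z};d)\ge g\zz d=-\bar{\gamma}\zz Ad$ for all $d$. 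For $d\in\mathcal{T}_{\mathcal{Z}}(\bar{z})=\{d:(Ad)_i\le 0,\ i\in\mathcal{A}\}$ with $\mathcal{A}=\{i:(A\bar{z})_i=c_i\}$, complementarity $\bar{\gamma}\zz(A\bar{z}-c)=0$ kills $\bar{\gamma}_i$ on inactive rows, while $\bar{\gamma}_i\ge 0$ and $(Ad)_i\le 0$ on active rows; hence $\bar{\gamma}\zz Ad\le 0$ and $\mathcal{O}^{\circ}(\bar{z};d)\ge 0$, which is \eqref{dLRP} with $\mathcal{O}^{\circ}$.

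\textbf{Step 2 ($\bar{z}\in\Omega_1$).} This is the step I expect to be the crux, and the only place where $\beta>L_{\mathcal{F}}+L_{\mathcal{R}}$ is decisive. Since $\bar{z}\in\mathcal{Z}\subset\Omega_2$ I already have $\bar{v}_n\ge(\bar{W}x_n+\bar{b}_1)_{+}$, so I only need to exclude a strict inequality in any entry. I would read off the $v_{n,j}$-block of $g=-A\zz\bar{\gamma}$. Splitting $g$ by the Clarke sum rule $\partial\mathcal{O}\subset\partial\mathcal{F}+\partial\mathcal{R}+\partial\mathcal{P}$, its $v_{n,j}$-entry equals $(g_{\mathcal{F}})_{v_{n,j}}+\lambda_1+\beta$, where the $\mathcal{R}$- and $\mathcal{P}$-parts contribute the smooth constants $\lambda_1$ and $\beta$, and $|(g_{\mathcal{F}})_{v_{n,j}}|\le L_{\mathcal{F}}$ since $\bar{z}\in\Omega_\theta$; thus this entry is at least $\beta+\lambda_1-L_{\mathcal{F}}>L_{\mathcal{R}}+\lambda_1>0$. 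On the other hand only the two rows encoding $\bar{v}_{n,j}\ge(\bar{W}x_n+\bar{b}_1)_j$ and $\bar{v}_{n,j}\ge 0$ have a nonzero entry in the $v_{n,j}$-column of $A$, so that entry of $-A\zz\bar{\gamma}$ is exactly the sum of their two nonnegative multipliers. If $\bar{v}_{n,j}>((\bar{W}x_n+\bar{b}_1)_{+})_j$ then both rows are strictly inactive, both multipliers vanish by complementarity, and the entry would be $0$, contradicting its positivity. Hence equality holds for every $(n,j)$ and $\bar{z}\in\Omega_1$.

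\textbf{Step 3 (passing to \eqref{eq:reguauto4} and \eqref{eq:reguauto2}).} If $\bar{z}\in\mathrm{int}(\Omega_3)$ the box rows are inactive, so $\mathcal{A}$ contains only $\Omega_2$-rows and $\mathcal{T}_{\mathcal{Z}}(\bar{z})=\mathcal{T}_{\Omega_2}(\bar{z})$; Step 1 then yields $\mathcal{O}^{\circ}(\bar{z};d)\ge 0$ on $\mathcal{T}_{\Omega_2}(\bar{z})$, i.e.\ generalized d-stationarity of \eqref{eq:reguauto4}. For \eqref{eq:reguauto2}, $\Omega_1\subset\Omega_2$ gives $\mathcal{T}_{\Omega_1}(\bar{z})\subset\mathcal{T}_{\Omega_2}(\bar{z})$, so $\mathcal{O}^{\circ}(\bar{z};d)\ge 0$ persists on $\mathcal{T}_{\Omega_1}(\bar{z})$. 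I would then recycle the final computation of Theorem~\ref{thm:dsta} verbatim to obtain $\mathcal{P}^{\prime}(\bar{z};d)=0$ for $d\in\mathcal{T}_{\Omega_1}(\bar{z})$ (this uses only $\bar{z}\in\Omega_1$, directional differentiability and Lipschitz continuity of $\mathcal{P}$), and regularity of $\mathcal{P}$ upgrades it to $\mathcal{P}^{\circ}(\bar{z};d)=\mathcal{P}^{\prime}(\bar{z};d)=0$. Subadditivity of the Clarke generalized directional derivative together with the smoothness of $\mathcal{R}$ finally gives $\mathcal{F}^{\circ}(\bar{z};d)+\nabla\mathcal{R}(\bar{z})\zz d\ge\mathcal{O}^{\circ}(\bar{z};d)-\mathcal{P}^{\circ}(\bar{z};d)=\mathcal{O}^{\circ}(\bar{z};d)\ge 0$, which is precisely \eqref{dR} with $\mathcal{F}^{\circ}$.
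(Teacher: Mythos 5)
Your Steps 1 and 3 are essentially the paper's own argument: the support-function representation of $\mathcal{O}^{\circ}(\bar{z};\cdot)$ combined with complementarity to get generalized d-stationarity of \eqref{eq:reguauto7}, the identity $\mathcal{T}_{\mathcal{Z}}(\bar{z})=\mathcal{T}_{\Omega_2}(\bar{z})$ when $\bar{z}\in\mathrm{int}(\Omega_3)$, and the subadditivity computation $(\mathcal{F}+\mathcal{R})^{\circ}(\bar{z};d)\geq\mathcal{O}^{\circ}(\bar{z};d)-\mathcal{P}^{\circ}(\bar{z};d)$ together with $\mathcal{P}^{\prime}(\bar{z};d)=0$ on $\mathcal{T}_{\Omega_1}(\bar{z})$ upgraded by regularity. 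Where you genuinely diverge is Step 2. The paper proves $\bar{z}\in\Omega_1$ by contradiction with an explicit descent direction: it sets $\widetilde{v}_n=(\bar{W}x_n+\bar{b}_1)_{+}$, establishes the uniform estimate $\mathcal{O}(z+t(\widetilde{z}-\bar{z}))-\mathcal{O}(z)\leq t(\beta-L_{\mathcal{F}}-L_{\mathcal{R}})\sum_{n}e\zz(\widetilde{v}_n-\bar{v}_n)<0$ for all $z$ in a ball around $\bar{z}$ (the whole neighborhood is needed because $\mathcal{O}^{\circ}$ is a limsup over base points), and contradicts the generalized d-stationarity obtained in Step 1. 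You instead read off the $v_{n,j}$-coordinate of the multiplier equation: only two rows of $A$ meet that column, so $(-A\zz\bar{\gamma})_{v_{n,j}}$ is the sum of two multipliers that vanish by complementarity whenever $\bar{v}_{n,j}>((\bar{W}x_n+\bar{b}_1)_{+})_j$, while the sum rule forces that coordinate of any element of $\partial\mathcal{O}(\bar{z})$ to equal $(g_{\mathcal{F}})_{v_{n,j}}+\lambda_1+\beta>0$. This is more direct and purely algebraic, works coordinatewise, and uses the KKT hypothesis itself rather than its d-stationarity consequence. The one step you should justify more carefully is the bound $|(g_{\mathcal{F}})_{v_{n,j}}|\leq L_{\mathcal{F}}$ ``since $\bar{z}\in\Omega_\theta$'': $L_{\mathcal{F}}$ is a Lipschitz modulus over the set $\Omega_\theta$, whereas the inclusion of $\partial\mathcal{F}(\bar{z})$ in the ball of that radius requires a Lipschitz rank on a full open neighborhood of $\bar{z}$, and $\bar{z}$ may sit on the boundary of $\Omega_2\supset\Omega_\theta$, so gradients from outside $\Omega_\theta$ enter the Clarke subdifferential. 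This can be repaired by taking $L_{\mathcal{F}}$ as a Lipschitz constant on a neighborhood of $\Omega_\theta\cap\mathcal{Z}$ (the paper's own neighborhood estimate in this proof implicitly relies on the same reading), so it is a shared technicality rather than a flaw specific to your route.
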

\begin{proof}			
	By the definition of generalized KKT point of \eqref{eq:reguauto7}, \cite[Proposition 2.1.2]{clarke1990optimization} and $\mathcal{T}_{\mathcal {Z}}(\bar{z})=\{d : (Ad)_i\le 0, \, i\in {\cal A}\} $ where
	${\cal A}=\{i \in \{1,\ldots, \nu\} : (A\bar{z})_i =c_i\}$, we have
	$$0\le -(Ad)^{\top}\bar{\gamma}\le \max_{\xi \in \partial {\cal O}(\bar{z})} \xi^\top d=\mathcal{O}^{\circ}\left(\bar{z} ; d \right), \quad \forall d \in \mathcal{T}_{\mathcal {Z}}(\bar{z}),$$
	which implies that $\bar{ z}$ is a generalized d-stationary point of \eqref{eq:reguauto7}.

	Now, we prove that $\bar{z}\in\Omega_1$.
	
	Assume on contradiction that $\bar{z}\notin\Omega_1$,
	we construct the same $\widetilde{ z}$ as that in the proof of Theorem \ref{thm:dsta}.
	Since $\mathcal{O}(\bar{z})< \theta$ and $\mathcal{O}$ is locally Lipschitz continuous, there exists $\epsilon>0$ such that for all $z\in\mathcal{B}_{\epsilon}(\bar{ z})$, it holds that $\mathcal{O}(z)< \theta$. Furthermore, for any $z\in\mathcal{B}_{\epsilon}(\bar{ z})$, there exists $t_1\in(0,1]$
	such that $\mathcal{O}(z+t (\widetilde{ z}-\bar{z}))\leq\theta$ for all $0<t<t_1$. Together with $\|\cdot\|_2\leq\|\cdot\|_1$ and the definition of $z$ and $\widetilde{ z}$, we also have
	\begin{align*}
		&\mathcal{O}(z+t (\widetilde{ z}-\bar{z}) )-\mathcal{O}(z)
		\leq t(\beta-(L_{\mathcal{F}}+L_{\mathcal{R}}))\sum_{n=1}^Ne\zz  (\widetilde{ v}_n-\bar{v}_n).
	\end{align*}
	Using a similar method as that in the proof of Theorem \ref{thm:dsta}, we have $ \bar{z}\in\Omega_1$, which implies $\bar{v}_n=(\bar{W}x_n+\bar{b}_{1})_{+}$ {holds for all} $n=1,\ldots,N$.

	Since $\bar{z} \in \mathrm{int}({\Omega_3})$ implies
	${\cal T}_{\mathcal{Z}}(\bar{z})={\cal T}_{\Omega_2}(\bar{z})$, we obtain that $\bar{z}$ is a generalized d-stationary point of~\eqref{eq:reguauto4}.  
	
	Finally, for all $d\in\mathcal{T}_{\Omega_1}(\bar{ z})$, we have 
	\begin{align*}
		&(\mathcal{F}+\mathcal{R})^{\circ}(\bar{ z};d)=(\mathcal{F}+\mathcal{R})^{\circ}(\bar{ z};d)+\mathcal{P}^{\prime}(\bar{ z};d)=(\mathcal{F}+\mathcal{R})^{\circ}(\bar{ z};d)+\mathcal{P}^{\circ}(\bar{ z};d)\\\geq&\mathcal{O}^{\circ}(\bar{ z};d)\geq 0,
	\end{align*}
	where the first equality comes from $\mathcal{P}^{\prime}(\bar{z}, d)=0$ (see the last part of the proof of Theorem \ref{thm:dsta}), the second equality comes from $\mathcal{P}$ being regular at $\bar{ z}$, and the last inequality comes from $d\in\mathcal{T}_{\Omega_1}(\bar{ z})\subset\mathcal{T}_{\Omega_2}(\bar{ z})$. Hence, $\bar{ z}$
	is a generalized d-stationary point of \eqref{eq:reguauto2}.
\end{proof}

We end this section by summarizing our results for the relationship of problems \eqref{eq:reguauto2},~\eqref{eq:reguauto4} and~\eqref{eq:reguauto7} with $\beta > L_{\cal F}+L_{\cal R}$ 
in the following diagram, where $\bar{ z}\in\Omega_\theta$, $L_{\mathcal{F}}$ and $L_{\mathcal{R}}$
are the Lipschitz modulus of $\mathcal{F}$ and $\mathcal{R}$ over $\Omega_\delta$, respectively. 
\begin{equation*}\label{eq:relation}
	\footnotesize
	\boxed{\begin{aligned}
			&R: \,\:\quad \text{global minimizer}\,\quad\,\text{local minimizer}\,\quad\, \text{d-stationary point}\,\quad\, \text{ generalized d-stationary point }\\
			&\quad\quad\quad\quad\quad\,\Downarrow\Uparrow\quad\quad\quad\quad\quad\,\,\,\,\quad\quad\Downarrow\hspace{0.7in}\Uparrow\hspace{1.22in}\Uparrow\text{$\mathcal{P}$ is regular at $\bar{ z}$}\\
			&RP:\:\,\: \text{global minimizer}\,\quad\, \text{local minimizer}\,\quad\,  \text{d-stationary point}\,\quad\,\text{ generalized d-stationary point } \\
			&\quad\quad\,\,\tiny{\text{$\bar{z}\in\Omega_3$ }}\Downarrow\Uparrow\quad\quad\quad\,\quad\quad\,\tiny{\text{$\bar{z}\in\Omega_3$ }}\Downarrow\Uparrow \quad \quad \quad\quad\quad\quad\,\,\,\,\Uparrow\tiny{\text{$\mathcal{O}(\bar{z})<\theta$ }} \quad\quad\quad\quad\quad\quad\quad\:\:\, \Uparrow\tiny{\text{$\bar{z}\in\mathrm{int}(\Omega_3),\mathcal{O}(\bar{z})<\theta$ }}\\
			&LRP: \text{global minimizer}\,\quad\, \text{local minimizer}\,\quad\,  \text{d-stationary point}\,\quad\,\text{ generalized d-stationary point }
	\end{aligned}}
\end{equation*}
\section{A Smoothing Proximal Gradient Algorithm}\label{sec:smooth}

In this section, we propose a smoothing proximal gradient algorithm (SPG) for solving problem~\eqref{eq:reguauto7}.
The proposed SPG introduces a smoothing function of the objective function  of \eqref{eq:reguauto7} and solves a strongly convex quadratic program over its feasible set ${\cal Z}=\{z:A z\leq c\}$ at each iteration.   	In the rest of this section, we first present
the algorithm framework and then establish convergence results of the algorithm.

\subsection{Algorithm Framework}

\begin{definition}\label{def:smooth}\cite{chen2012smoothing}
	Let $f : \R^m \mapsto \R$ be a continuous function. We call $\widetilde{f}: \R^m \times \R_{+} \mapsto \R$ a smoothing function of $f$, if for all fixed {$\mu>0$}, $\widetilde{f}(\cdot,\mu)$ is continuously differentiable, and $\lim_{y\rightarrow\bar{y},\mu\downarrow 0} \widetilde{f}(y,\mu) = f(\bar{y})$.
\end{definition}

In this paper, we adopt the following smoothing function $\widetilde{\sigma}(y,\mu):\R^m\times\R _{+}\mapsto\R^m$ for the ReLU activation function $\sigma=(y)_+$ as follows.
\begin{equation*}
	\widetilde{\sigma}_i(y,\mu) =\left\{\begin{aligned}
		&0 \quad\quad\,\,\,\,\text{ if }y_i<0,\\
		&\frac{y_i^2}{2\mu}\quad\,\,\,\,\,\text{ if }0\leq y_i\leq\mu,\\
		&y_i-\frac{\mu}{2} \,\,\,\,\text{ if } y_i>\mu
	\end{aligned}\right.
\end{equation*}
for all $i=1,\ldots,m$, where $y_i$ is the $i$-th element of $y\in \R^m$. Then, we obtain that  $\nabla\widetilde{\sigma}_i(y,\mu)=\min\left\{\max\left\{\frac{y_i}{\mu},0\right\},1\right\}$, and $\widetilde{\sigma}(y,\mu_1)<\widetilde{\sigma}(y,\mu_2)$ with $\mu_1>\mu_2$.

We construct a smoothing function of $\mathcal{O}( z)$ over {$\mathcal{Z}$} for $\mu>0$,
\begin{equation}\label{eq:smooth}
	\begin{aligned}
		\widetilde{\mathcal{O}}( z,\mu):&=\widetilde{\mathcal{H}}( z,\mu)+\mathcal{R}(z),
	\end{aligned}
\end{equation}
where $\widetilde{\mathcal{H}}( z,\mu):=\widetilde{\mathcal{F}}( z,\mu)+\widetilde{P}( z,\mu)$, and
$$\widetilde{\mathcal{F}}( z,\mu)=\frac{1}{N}\sum_{n=1}^N\|(W\zz v_n+b_2)_{+}\|_2^2+\frac{1}{N}\|X\|_F^2-\frac{2}{N}\sum_{n=1}^Nx_n\zz \widetilde{\sigma}(W\zz v_n+b_2,\mu),$$
$$\widetilde{P}( z,\mu)=\beta\sum_{n=1}^Ne\zz \left(v_n-\widetilde{\sigma}(Wx_n+b_{1},\mu)\right)$$
{are the smoothing functions of $\mathcal{F}(z)$ and $\mathcal{P}(z)$, respectively. Here we use the smoothness of $\sum_{n=1}^N\|(W\zz v_n+b_2)_{+}\|_2^2$.  
	It is clear that $\widetilde{\sigma}(Wx_n+b_{1},\mu) \le (Wx_n+b_{1})_+$ and  $\widetilde{\mathcal{O}}( z,\mu_1)>\widetilde{\mathcal{O}}( z,\mu_2)$ for $\mu_1>\mu_2$ and  {$ z\in\mathcal{Z}$. In addition, for all $ z\in\mathcal{Z}$ and $\mu>0$,} we have
	\begin{equation}\label{eq:obound}
		0\leq \mathcal{O}( z)\leq \widetilde{\mathcal{O}}( z,\mu)\leq \mathcal{O}( z)+(\|X\|_1+N_1N\beta)\mu.
	\end{equation}
	{The function $\mathcal{R}$ is a convex quadratic function and the eigenvalues of the Hessian matrix of $\mathcal{R}$ are in $\{0, 2\lb_2\}$.
		It is clear that $\widetilde{\mathcal{P}}(\cdot,\mu)$, $\widetilde{\mathcal{F}}(\cdot,\mu)$, $\nabla_z\widetilde{\mathcal{P}}(\cdot,\mu)$ and $\nabla_z\widetilde{\mathcal{F}}(\cdot,\mu)$ are
		locally Lipschitz continuous for any fixed $\mu>0$. Moreover, $\mu\widetilde{\mathcal{P}}$, $\mu\widetilde{\mathcal{F}}$, $\mu\nabla_z\widetilde{\mathcal{P}}$ and $\mu\nabla_z\widetilde{\mathcal{F}}$ are piecewise quadratic functions with respect to $\mu$.
		
		By the proof of Theorem \ref{thm:nonem}, the set $\Omega_\theta\cap {\cal Z}$ is bounded and $\|z\|_{\infty}\leq \max\{\alpha,\eta\}$ holds for any $z\in  \Omega_\theta\cap {\cal Z}$, where $\eta:=\max\{\sqrt{\frac{N_1N_0\theta}{\lb_2}}, \frac{\theta}{\lb_1}\}$. 
		Let $L_{\widetilde{\mathcal{H}}}$ and $L_{\nabla\widetilde{\mathcal{H}}}$ be Lipschitz modulus of 
		$\mu\widetilde{\mathcal{H}}$ over $\Omega_\theta\cap {\cal Z}\times (0,1)$, and $\mu \nabla_z\widetilde{\mathcal{H}}$
		over  $\{z:\|z\|_{\infty}\leq \max\{\alpha,2\eta\}\}\times (0,1),$ respectively.
		
		Our smoothing proximal gradient algorithm is presented in Algorithm \ref{alg:autoencoder1}.}

	\begin{algorithm}
		\caption{A smoothing proximal gradient algorithm (SPG)}
		\label{alg:autoencoder1}
		\begin{algorithmic}[1]
			\STATE{Initialization: choose $z^{(0)}\in\mathcal{Z}$, $0<\mu^{(0)}<1$, $0<\tau_1<1$, $\tau_2>0$, $\tau_3\geq 1$, and $L^{(0)}\geq 1$. Set $k:=0$}.
			\WHILE{{a termination criterion is not met,} }
			\STATE\label{line3}
			Set $z^{(k+1)}$ be the unique minimizer of the strongly convex quadratic program  \begin{equation}\label{eq:updateWBV1}
				\min_{ z\in\mathcal{Z}}\left\langle\nabla_z \widetilde{\mathcal{H}}( z^{(k)},\mu^{(k)}), z- z^{(k)}\right\rangle+\mathcal{R}( z)+\frac{L^{(k)}}{2}\| z- z^{(k)}\|_2^2.
			\end{equation}
			
			\STATE\label{line5}{
				Update the smoothing and proximal parameters $\mu^{(k+1)}$ and  $L^{(k+1)}$ by
				\begin{equation}\label{eq:mu}
					\left\{\begin{aligned}
						&(\mu^{(k+1)},\,L^{{(k+1)}}):=(\mu^{(k)}, \,L^{(k)}), \,\mbox{ if\, } \widetilde{\mathcal{O}}( z^{(k+1)},\mu^{(k)})- \widetilde{\mathcal{O}}( z^{(k)},\mu^{(k)})<-\tau_2\frac{\mu^{(k)}}{L^{(k)}}, \\ &(\mu^{(k+1)},\,L^{{(k+1)}}):=(\tau_1\mu^{(k)},\,\tau_3 L^{(k)}), \,\mbox{ otherwise. } \end{aligned}\right.
				\end{equation}
				Set $k:=k+1$.
			}
			\ENDWHILE
		\end{algorithmic}
	\end{algorithm}
	
	\subsection{Convergence Analysis}
	
	The following lemma will be used for the convergence results of SPG.
	
	\begin{lemma}\label{lem:alg1}
		Let $\{ z^{(k)}\}$ and $\{\mu^{(k)}\}$ be the sequences generated by~Algorithm \ref{alg:autoencoder1} with $\mathcal{O}(z^{(0)})<\theta$, $\tau_1\tau_3\geq 1$ and $\mu^{(0)}L^{(0)}$
		satisfying
		\begin{equation}\label{eq:LLL}
			\mu^{(0)}L^{(0)}\geq \max\left\{6\lb_2N_1N_0+\frac{2}{\eta}(N_2 L_{\widetilde{\mathcal{H}}}+\lb_1N_1N),\,\, 8\lb_2+L_{\nabla\widetilde{\mathcal{H}}}\right\}.
		\end{equation}
		Then, the following statements hold.
		
		(a) The sequence \(\left\{ \widetilde{\mathcal{O}}( z^{(k)},\mu^{(k)})\right\}\) is non-increasing, and $\{z^{(k)}\}\subset \Omega_\theta\cap {\cal Z} $;
		
		(b) If
		$\widetilde{\mathcal{O}}( z^{(k+1)},\mu^{(k)})- \widetilde{\mathcal{O}}( z^{(k)},\mu^{(k)})\geq -\tau_2\frac{\mu^{(k)}}{L^{(k)}}$, then there exists a nonnegative vector $\gamma^{{(k+1)}}\in\R^\nu$ such that
		\begin{equation}\label{eq:kktregu}		
			\begin{aligned}
				&\left\|\nabla_z\widetilde{\mathcal{O}}( z^{(k)},\mu^{(k)})+ A\zz \gamma^{{(k+1)}}\right\|_{2}\leq  2\sqrt{\tau_2}(\mu^{(k)})^{1/2}, \text{ and}\\
				&A z^{(k)} \leq c, \,\,\,  -\tau_2\frac{(\mu^{(k)})^2}{8\lb_2+L_{\nabla\widetilde{\mathcal{H}}}}\leq (\gamma^{{(k+1)}})\zz(A z^{(k)}-c)\leq 0.
			\end{aligned}
		\end{equation}
	\end{lemma}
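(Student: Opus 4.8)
The plan is to derive a single one--step \emph{sufficient--decrease inequality} for the smoothed objective and then read both parts off it. Let $Q_k$ denote the strongly convex quadratic subproblem objective in Line~\ref{line3}, write $d^{(k)}=z^{(k+1)}-z^{(k)}$ and $g^{(k)}=\nabla_z\widetilde{\mathcal{H}}(z^{(k)},\mu^{(k)})$. Since $z^{(k+1)}$ is the exact minimizer of $Q_k$ over $\mathcal{Z}$ and $z^{(k)}\in\mathcal{Z}$, strong convexity gives $\langle g^{(k)},d^{(k)}\rangle+\mathcal{R}(z^{(k+1)})\le\mathcal{R}(z^{(k)})-L^{(k)}\|d^{(k)}\|_2^2$. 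Because $\mu\nabla_z\widetilde{\mathcal{H}}$ has modulus $L_{\nabla\widetilde{\mathcal{H}}}$, the gradient $\nabla_z\widetilde{\mathcal{H}}(\cdot,\mu^{(k)})$ is $(L_{\nabla\widetilde{\mathcal{H}}}/\mu^{(k)})$--Lipschitz, and the descent lemma together with the previous inequality yields
\begin{equation*}
\widetilde{\mathcal{O}}(z^{(k+1)},\mu^{(k)})\le\widetilde{\mathcal{O}}(z^{(k)},\mu^{(k)})-\Big(L^{(k)}-\tfrac{L_{\nabla\widetilde{\mathcal{H}}}}{2\mu^{(k)}}\Big)\|d^{(k)}\|_2^2.
\end{equation*}
The hypotheses $\tau_1\tau_3\ge1$ and \eqref{eq:LLL} are exactly what is needed here: the update rule makes $\mu^{(k)}L^{(k)}$ nondecreasing, hence $\mu^{(k)}L^{(k)}\ge8\lb_2+L_{\nabla\widetilde{\mathcal{H}}}$ for all $k$, so the bracketed coefficient is at least $L^{(k)}/2>0$.

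For part (a) I would prove by induction that $z^{(k)}\in\Omega_\theta\cap\mathcal{Z}$ and $\widetilde{\mathcal{O}}(z^{(k)},\mu^{(k)})\le\theta$. Membership in $\mathcal{Z}$ is automatic. The subtlety is that the descent lemma is licensed only when $[z^{(k)},z^{(k+1)}]$ lies in the ball $\{\|z\|_\infty\le\max\{\alpha,2\eta\}\}$ on which $L_{\nabla\widetilde{\mathcal{H}}}$ is a valid modulus, so I would first bound the step \emph{without} the descent lemma: the comparison $Q_k(z^{(k+1)})\le Q_k(z^{(k)})$ and convexity of $\mathcal{R}$ give $\tfrac{L^{(k)}}{2}\|d^{(k)}\|_2\le\|\nabla_z\widetilde{\mathcal{O}}(z^{(k)},\mu^{(k)})\|_2$, and on $\Omega_\theta\cap\mathcal{Z}$ the right--hand side is at most $L_{\widetilde{\mathcal{H}}}/\mu^{(k)}+\|\nabla\mathcal{R}(z^{(k)})\|_2$; the first term of \eqref{eq:LLL} is calibrated so that $\|d^{(k)}\|_2\le\eta$. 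Since the bias block of $z^{(k+1)}$ remains in $[-\alpha,\alpha]$ by feasibility while the $W,V$ blocks move by at most $\eta$, the iterate lands in the larger ball, so the descent inequality applies and gives $\widetilde{\mathcal{O}}(z^{(k+1)},\mu^{(k)})\le\widetilde{\mathcal{O}}(z^{(k)},\mu^{(k)})$. Whether the update keeps $\mu^{(k)}$ (with a strict drop) or shrinks it, and using that lowering $\mu$ only lowers $\widetilde{\mathcal{O}}$ on $\mathcal{Z}$, monotonicity of $\{\widetilde{\mathcal{O}}(z^{(k)},\mu^{(k)})\}$ follows in both branches; then $\mathcal{O}(z^{(k+1)})\le\widetilde{\mathcal{O}}(z^{(k+1)},\mu^{(k+1)})\le\widetilde{\mathcal{O}}(z^{(0)},\mu^{(0)})\le\theta$ by \eqref{eq:obound}, whence $z^{(k+1)}\in\Omega_\theta$. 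The base case uses the strict gap $\theta-\mathcal{O}(z^{(0)})>0$ to absorb the $O(\mu^{(0)})$ smoothing error in \eqref{eq:obound}.

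For part (b) I assume the sufficient--decrease test fails, that is $\widetilde{\mathcal{O}}(z^{(k+1)},\mu^{(k)})-\widetilde{\mathcal{O}}(z^{(k)},\mu^{(k)})\ge-\tau_2\mu^{(k)}/L^{(k)}$. Substituting this into the one--step inequality and using the coefficient bound $L^{(k)}-L_{\nabla\widetilde{\mathcal{H}}}/(2\mu^{(k)})\ge L^{(k)}/2$ produces the key estimate $\|d^{(k)}\|_2\le\sqrt{2\tau_2\mu^{(k)}}/L^{(k)}$. The residual follows from the first--order optimality of the subproblem, $g^{(k)}+\nabla\mathcal{R}(z^{(k+1)})+L^{(k)}d^{(k)}+A\zz\gamma^{(k+1)}=0$ with $\gamma^{(k+1)}\ge0$ and $(\gamma^{(k+1)})\zz(Az^{(k+1)}-c)=0$: subtracting $\nabla_z\widetilde{\mathcal{O}}(z^{(k)},\mu^{(k)})$ and using that $\nabla^2\mathcal{R}$ has eigenvalues in $\{0,2\lb_2\}$ gives $\nabla_z\widetilde{\mathcal{O}}(z^{(k)},\mu^{(k)})+A\zz\gamma^{(k+1)}=-(\nabla^2\mathcal{R}+L^{(k)}I)d^{(k)}$, of norm at most $(2\lb_2+L^{(k)})\|d^{(k)}\|_2$; since \eqref{eq:LLL} and $\mu^{(k)}<1$ force $L^{(k)}>8\lb_2$, the factor $(2\lb_2+L^{(k)})/L^{(k)}<\tfrac54$ delivers the stated $2\sqrt{\tau_2}(\mu^{(k)})^{1/2}$ bound. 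The feasibility $Az^{(k)}\le c$ and $\gamma^{(k+1)}\ge0$ give the upper complementarity bound $0$ at once.

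The step I expect to be the main obstacle is the \emph{lower} bound on $(\gamma^{(k+1)})\zz(Az^{(k)}-c)$. A direct Cauchy--Schwarz attack is circular, because every gradient identity for $A\zz\gamma^{(k+1)}$ collapses back to an identity in $d^{(k)}$ and yields only an $O(\sqrt{\mu^{(k)}})$ bound, far weaker than the required $O((\mu^{(k)})^2)$. The way out is to tie the complementarity term to the \emph{genuine function decrease}. Complementarity at $z^{(k+1)}$ gives $(\gamma^{(k+1)})\zz(Az^{(k)}-c)=\nabla Q_k(z^{(k+1)})\zz d^{(k)}$; expanding the quadratic $Q_k$ exactly and inserting the same value--comparison and descent--lemma combination used above shows this quantity is bounded below by $[\widetilde{\mathcal{O}}(z^{(k+1)},\mu^{(k)})-\widetilde{\mathcal{O}}(z^{(k)},\mu^{(k)})]$ plus the two manifestly nonnegative terms $\tfrac12(d^{(k)})\zz(\nabla^2\mathcal{R}+L^{(k)}I)d^{(k)}$ and $\tfrac12(L^{(k)}-L_{\nabla\widetilde{\mathcal{H}}}/\mu^{(k)})\|d^{(k)}\|_2^2$. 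Dropping the nonnegative terms and bounding the bracket below by the failed--test hypothesis gives $(\gamma^{(k+1)})\zz(Az^{(k)}-c)\ge-\tau_2\mu^{(k)}/L^{(k)}$, and finally $8\lb_2+L_{\nabla\widetilde{\mathcal{H}}}\le\mu^{(k)}L^{(k)}$ upgrades this to the advertised $-\tau_2(\mu^{(k)})^2/(8\lb_2+L_{\nabla\widetilde{\mathcal{H}}})$.
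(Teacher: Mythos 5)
Your proposal is correct and follows essentially the same route as the paper's proof in Appendix A: induction for part (a), a step-size/ball-containment argument driven by the first term of \eqref{eq:LLL} so that the segment $[z^{(k)},z^{(k+1)}]$ stays where $L_{\nabla\widetilde{\mathcal{H}}}$ is a valid modulus, the descent lemma for $\widetilde{\mathcal{H}}(\cdot,\mu^{(k)})$, and the KKT system of \eqref{eq:updateWBV1} to produce $\gamma^{(k+1)}$ and the residual bound. The one place you diverge is the step you flagged as the obstacle: the paper substitutes the KKT stationarity identity $\nabla_z\widetilde{\mathcal{H}}(z^{(k)},\mu^{(k)})=-\nabla\mathcal{R}(z^{(k+1)})-A\zz\gamma^{(k+1)}-L^{(k)}(z^{(k+1)}-z^{(k)})$ directly into the descent lemma, so its one-step inequality already contains $(\gamma^{(k+1)})\zz(Az^{(k)}-c)$ as an additive (nonpositive) term alongside the $\|z^{(k+1)}-z^{(k)}\|_2^2$ term, and both the step bound and the complementarity lower bound drop out simultaneously from the failed sufficient-decrease test; you instead obtain the decrease via the strong-convexity value comparison (multiplier-free) and then recover the complementarity bound through the identity $(\gamma^{(k+1)})\zz(Az^{(k)}-c)=\nabla Q_k(z^{(k+1)})\zz(z^{(k+1)}-z^{(k)})$ and an exact expansion. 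The two derivations are algebraically equivalent (yours even gives a slightly sharper coefficient, $2\mu^{(k)}L^{(k)}-L_{\nabla\widetilde{\mathcal{H}}}$ versus the paper's $2\mu^{(k)}L^{(k)}-6\lb_2-L_{\nabla\widetilde{\mathcal{H}}}$), and all the constants you check ($L^{(k)}>8\lb_2$, the factor $\tfrac{5}{4}\sqrt{2}<2$, and $\mu^{(k)}L^{(k)}\ge 8\lb_2+L_{\nabla\widetilde{\mathcal{H}}}$ to convert $-\tau_2\mu^{(k)}/L^{(k)}$ into $-\tau_2(\mu^{(k)})^2/(8\lb_2+L_{\nabla\widetilde{\mathcal{H}}})$) match the paper's.
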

	
	The proof is given in~Section \ref{sec:bound}.
	It is worth mentioning that  $z^{(0)}=0$ satisfies
	the condition on the initial guess in~Lemma \ref{lem:alg1}. Now we present
	our main convergence theorem as follows.
	
	\begin{theorem}\label{thm:alg1}
		Under assumptions of Lemma  \ref{lem:alg1},	
		the following statements hold.\\
		(a) $\lim_{k\rightarrow\infty}\mu^{(k)}=0$;\\
		(b) $\{\mathcal{O}(z^{(k)})\}$ and $\{\widetilde{\mathcal{O}}( z^{(k)},\mu^{(k)})\}$
		are convergent. Moreover,  we have
		\begin{equation}\label{eq:ooo}
			\lim_{k\rightarrow \infty}\widetilde{\mathcal{O}}( z^{(k)},\mu^{(k)})=\lim_{k\rightarrow \infty}\mathcal{O}( z^{(k)}).
		\end{equation}
	\end{theorem}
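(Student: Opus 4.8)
The plan is to establish (a) first, via a monotonicity-and-contradiction argument driven by the update rule~\eqref{eq:mu}, and then obtain (b) as an immediate squeeze using the uniform gap~\eqref{eq:obound} between $\mathcal{O}$ and $\widetilde{\mathcal{O}}$ together with (a).

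For (a), I would first observe from~\eqref{eq:mu} that $\{\mu^{(k)}\}$ is non-increasing: at each iteration $\mu^{(k)}$ is either kept or multiplied by $\tau_1\in(0,1)$. Hence $\mu^{(k)}\downarrow\mu^*$ for some $\mu^*\ge 0$, and it remains to rule out $\mu^*>0$. Assume $\mu^*>0$. Since every shrinking step multiplies $\mu$ by $\tau_1<1$, the parameter can shrink only finitely often, for otherwise it would be multiplied by $\tau_1$ infinitely many times and $\mu^{(k)}\to 0$. Thus there is an index $K$ with $\mu^{(k)}=\mu^*$ for all $k\ge K$. The essential point is the coupling in~\eqref{eq:mu}: whenever $\mu$ is held fixed, so is $L$, so $L^{(k)}=L^*$ is also constant for $k\ge K$.

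For each $k\ge K$ the ``keep'' branch of~\eqref{eq:mu} was taken, which by definition means the sufficient-decrease inequality
\begin{equation*}
\widetilde{\mathcal{O}}(z^{(k+1)},\mu^{(k)})-\widetilde{\mathcal{O}}(z^{(k)},\mu^{(k)})<-\tau_2\frac{\mu^{(k)}}{L^{(k)}}=-\tau_2\frac{\mu^*}{L^*}
\end{equation*}
holds; since $\mu^{(k+1)}=\mu^{(k)}$, the left side equals $\widetilde{\mathcal{O}}(z^{(k+1)},\mu^{(k+1)})-\widetilde{\mathcal{O}}(z^{(k)},\mu^{(k)})$, i.e. the decrease of the diagonal sequence. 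Summing this strict decrease over $k\ge K$ then forces $\widetilde{\mathcal{O}}(z^{(k)},\mu^{(k)})\to-\infty$, contradicting $\widetilde{\mathcal{O}}(z^{(k)},\mu^{(k)})\ge\mathcal{O}(z^{(k)})\ge0$ from~\eqref{eq:obound} (applicable because Lemma~\ref{lem:alg1}(a) keeps $z^{(k)}\in\Omega_\theta\cap\mathcal{Z}$). Hence $\mu^*=0$, which proves (a).

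For (b), Lemma~\ref{lem:alg1}(a) gives that $\{\widetilde{\mathcal{O}}(z^{(k)},\mu^{(k)})\}$ is non-increasing, and by~\eqref{eq:obound} it is bounded below by $0$, so it converges to some limit $\widetilde{O}^*$. The bound~\eqref{eq:obound} also yields
\begin{equation*}
0\le\widetilde{\mathcal{O}}(z^{(k)},\mu^{(k)})-\mathcal{O}(z^{(k)})\le(\|X\|_1+N_1N\beta)\,\mu^{(k)},
\end{equation*}
and since $\mu^{(k)}\to0$ by (a) this difference tends to $0$. Therefore $\mathcal{O}(z^{(k)})$ differs from the convergent sequence $\widetilde{\mathcal{O}}(z^{(k)},\mu^{(k)})$ by a vanishing amount, so $\{\mathcal{O}(z^{(k)})\}$ converges to the same limit $\widetilde{O}^*$, which is exactly~\eqref{eq:ooo}. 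The main obstacle is the contradiction step in (a), and specifically the need to exploit that $\mu$ and $L$ are updated jointly in~\eqref{eq:mu}: were $L^{(k)}$ allowed to grow while $\mu^{(k)}$ stayed fixed, the per-iteration threshold $\tau_2\mu^{(k)}/L^{(k)}$ could decay to $0$ and the telescoping sum would no longer diverge. The joint update is precisely what guarantees a uniform positive decrease $\tau_2\mu^*/L^*$ at every step after $K$; once this is secured, part (b) is a routine squeeze.
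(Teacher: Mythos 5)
Your proof is correct and follows essentially the same route as the paper: part (a) is the same telescoping contradiction (the paper assumes the ``keep'' branch holds for all large $k$ and sums the uniform decrease $\tau_2\mu^{(k_0)}/L^{(k_0)}$ against the lower bound $0$ from~\eqref{eq:obound}, exactly as you do after reducing to the case of finitely many shrinks), and part (b) is the same squeeze via monotonicity, nonnegativity, and the gap bound in~\eqref{eq:obound}. Your framing of (a) through the monotone limit $\mu^*$ and the finite/infinite-shrink dichotomy is only a cosmetic repackaging of the paper's direct contradiction hypothesis.
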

	
	\begin{proof}
		$($a$)$
		Assume on contradiction that there exists $k_0>0$
		such that whenever $k\geq k_0$,
		$\widetilde{\mathcal{O}}( z^{(k+1)},\mu^{(k)})- \widetilde{\mathcal{O}}( z^{(k)},\mu^{(k)})<-\tau_2\frac{\mu^{(k)}}{L^{(k)}}$ holds. Hence, it follows from the updating formula \eqref{eq:mu} of $\mu$ and $L$ that $
		\mu^{(k)}=\mu^{(k_0)}$ and $L^{(k)}=L^{(k_0)}$ for all $k\geq k_0.$
		Together with  the inequality $\widetilde{\mathcal{O}}( z^{(k+1)},\mu^{(k)})\geq\widetilde{\mathcal{O}}( z^{(k+1)},\mu^{(k+1)})$,
		we have
		\begin{equation}\label{eq:de}
			\widetilde{\mathcal{O}}( z^{(k+1)},\mu^{(k+1)})- \widetilde{\mathcal{O}}( z^{(k)},\mu^{(k)})<-\tau_2\frac{\mu^{(k)}}{L^{(k)}}=-\tau_2\frac{\mu^{(k_0)}}{L^{(k_0)}}
		\end{equation}
		for all $k\geq k_0$. 
		
		Denote $l=\lceil \frac{L^{(k_0)}}{\tau_2\mu^{(k_0)}}\widetilde{\mathcal{O}}( z^{{(k_0)}},\mu^{{(k_0)}}) \rceil$. It then follows from \eqref{eq:obound} and
		\eqref{eq:de} that
		\begin{align*}
			&-\widetilde{\mathcal{O}}( z^{{(k_0)}},\mu^{{(k_0)}})
			\leq\widetilde{\mathcal{O}}(z^{{(k_0+l)}},\mu^{{(k_0+l)}})- \widetilde{\mathcal{O}}( z^{(k_0)},\mu^{(k_0)})\\
			=&\sum_{k=k_0}^{k_0+l-1}\left(\widetilde{\mathcal{O}}(z^{{(k+1)}},\mu^{{(k+1)}})- \widetilde{\mathcal{O}}( z^{(k)},\mu^{(k)})\right)<-\tau_2\frac{\mu^{(k_0)}}{L^{(k_0)}} l\leq -\widetilde{\mathcal{O}}( z^{{(k_0)}},\mu^{{(k_0)}}),
		\end{align*}
		which leads to a contradiction. {Hence,
			the second situation in~\eqref{eq:mu} happens infinite times and we prove the assertion
			(a).
			
			$($b$)$
			Recall the nonnegativity of $\widetilde{\mathcal{O}}(z,\mu)$
			and the monotonical non-increasing of} \(\left\{\widetilde{\mathcal{O}}( z^{(k)},\mu^{(k)})\right\}\), {we can conclude that $\{\widetilde{\mathcal{O}}( z^{(k)},\mu^{(k)})\}$ is convergent.
			Together with } $\lim_{k\rightarrow\infty}\mu^{(k)}=0$, we have $\lim_{k\rightarrow \infty}\widetilde{\mathcal{O}}( z^{(k)},\mu^{(k)})=\lim_{k\rightarrow \infty}\mathcal{O}( z^{(k)})$, which completes the proof.
	\end{proof}
	
	
	

	\begin{theorem}\label{thm:gdsta}
		Suppose the assumptions of  Lemma \ref{lem:alg1} hold. Let ${\cal K}=\{k: \mu^{(k+1)}=\tau_1\mu^{(k)}, k\ge 0\}$. Then $\{ z^{(k)}: k\in {\cal K}  \}$ is bounded and
		any accumulation point $ z^{*}$ of $\{ z^{(k)}: k\in {\cal K}  \}$
		is a generalized d-stationary point of~\eqref{eq:reguauto7}.
		Moreover, if ${\cal O}$ is regular at $ z^{*}$, then $ z^{*}$
		is a d-stationary point of~\eqref{eq:reguauto7}.
	\end{theorem}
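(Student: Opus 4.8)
The plan is to pass to the limit in the approximate KKT system supplied by Lemma~\ref{lem:alg1}(b) and thereby produce an exact generalized KKT point at $z^{*}$, after which the first implication in the proof of Theorem~\ref{thm:ggdsta} yields the conclusion. First I would note that $\mathcal{K}$ is infinite: by Theorem~\ref{thm:alg1}(a), $\mu^{(k)}\to 0$, so the second branch of the update~\eqref{eq:mu} fires infinitely often, and this event is precisely the definition of $\mathcal{K}$. By Lemma~\ref{lem:alg1}(a) the whole sequence lies in $\Omega_\theta\cap\mathcal{Z}$, which is closed and bounded by Theorem~\ref{thm:nonem}; hence $\{z^{(k)}:k\in\mathcal{K}\}$ is bounded, has an accumulation point, and any accumulation point $z^{*}$ lies in $\Omega_\theta\cap\mathcal{Z}$, so that $Az^{*}\le c$. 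Fix such a $z^{*}$ and a subsequence $\{z^{(k)}\}_{k\in\mathcal{K}'}$, $\mathcal{K}'\subseteq\mathcal{K}$, with $z^{(k)}\to z^{*}$; since each $k\in\mathcal{K}'$ triggers the ``otherwise'' branch, Lemma~\ref{lem:alg1}(b) provides multipliers $\gamma^{(k+1)}\ge 0$ obeying $\|\nabla_z\widetilde{\mathcal{O}}(z^{(k)},\mu^{(k)})+A\zz\gamma^{(k+1)}\|_2\le 2\sqrt{\tau_2}(\mu^{(k)})^{1/2}$ together with the squeezed complementarity $-\tau_2(\mu^{(k)})^2/(8\lb_2+L_{\nabla\widetilde{\mathcal{H}}})\le(\gamma^{(k+1)})\zz(Az^{(k)}-c)\le 0$.

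Because $\nabla\widetilde{\sigma}$ is valued in $[0,1]$ and $z^{(k)}$ remains in a bounded set, the smoothed gradients $g^{(k)}:=\nabla_z\widetilde{\mathcal{O}}(z^{(k)},\mu^{(k)})$ are uniformly bounded, so after refining $\mathcal{K}'$ I may assume $g^{(k)}\to g^{*}$. The decisive step is gradient consistency: the ReLU smoothing obeys $\lim\nabla_y\widetilde{\sigma}(y^{(k)},\mu^{(k)})\in\partial\sigma(y^{*})$ whenever $y^{(k)}\to y^{*}$ and $\mu^{(k)}\downarrow 0$, and since the remaining dependence of $\widetilde{\mathcal{O}}$ on $z$ is through smooth (affine and quadratic) maps, with the squared ReLU term kept exact, the composite smoothing inherits this property, giving $g^{*}\in\partial\mathcal{O}(z^{*})$; I would cite~\cite{chen2012smoothing} here.

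Now I pass to the limit along $\mathcal{K}'$. Since $\mu^{(k)}\to 0$, the first estimate gives $A\zz\gamma^{(k+1)}\to-g^{*}$, while the complementarity estimate forces each product $\gamma_i^{(k+1)}(Az^{(k)}-c)_i\to 0$. For every inactive index $i$ (those with $(Az^{*}-c)_i<0$) the factor $(Az^{(k)}-c)_i$ is bounded away from $0$, so $\gamma_i^{(k+1)}\to 0$, and therefore $\sum_{i\in\mathcal{A}}\gamma_i^{(k+1)}A_{i,\cdot}\zz\to-g^{*}$, where $\mathcal{A}=\{i:(Az^{*})_i=c_i\}$. The cone $\{\sum_{i\in\mathcal{A}}\gamma_iA_{i,\cdot}\zz:\gamma_i\ge 0\}$ is finitely generated, hence closed, so $-g^{*}$ belongs to it: there is $\bar{\gamma}\ge 0$ supported on $\mathcal{A}$ with $A\zz\bar{\gamma}=-g^{*}$, whence $\bar{\gamma}\zz(Az^{*}-c)=0$. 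Combined with $g^{*}\in\partial\mathcal{O}(z^{*})$ and $Az^{*}\le c$, this is exactly the generalized KKT system~\eqref{eq:kktxi} at $z^{*}$, so the first implication in the proof of Theorem~\ref{thm:ggdsta} shows $z^{*}$ is a generalized d-stationary point of~\eqref{eq:reguauto7}. Finally, if $\mathcal{O}$ is regular at $z^{*}$, then $\mathcal{O}'(z^{*};d)=\mathcal{O}^{\circ}(z^{*};d)\ge 0$ for all $d\in\mathcal{T}_{\mathcal{Z}}(z^{*})$, which is the d-stationarity of~\eqref{eq:reguauto7}.

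The step I expect to be the main obstacle is establishing $g^{*}\in\partial\mathcal{O}(z^{*})$, i.e.\ that the limit of the smoothed gradients is a genuine Clarke subgradient of the nonsmooth objective rather than merely an element of the sum of the subgradients of its ReLU pieces; this is precisely where the particular design of $\widetilde{\mathcal{O}}$ (a single smoothing parameter and the exact squared ReLU term) matters. A secondary technical point is controlling the possibly unbounded multipliers $\gamma^{(k+1)}$, which the vanishing of the inactive components together with the closedness of the polyhedral cone generated by the active rows of $A$ resolves.
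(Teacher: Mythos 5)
Your proposal is correct and follows the same route as the paper's proof: extract a convergent subsequence from the bounded set $\Omega_\theta\cap\mathcal{Z}$, pass to the limit in the approximate KKT system \eqref{eq:kktregu} from Lemma~\ref{lem:alg1}(b), invoke gradient (sub-)consistency of the smoothing to place the limiting gradient in $\partial\mathcal{O}(z^*)$, and then apply the first implication of Theorem~\ref{thm:ggdsta} (plus regularity for the d-stationarity claim). The one point where you genuinely deviate is the multiplier limit: the paper simply asserts that $\{\gamma^{(k+1)}\}$ is bounded ``by the structure of $A$ and Lipschitz continuity of $\mathcal{O}$'' and extracts a convergent $\gamma^*$, whereas you avoid any boundedness claim by showing the inactive components vanish (via the termwise squeeze in the complementarity estimate) and then recovering $\bar\gamma$ from the closedness of the finitely generated cone of active rows of $A^{\top}$; this is a more robust justification of a step the paper treats tersely, at the cost of a slightly longer argument.
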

	\begin{proof}
		Let  $\{\gamma^{(k+1)} \} \subset\R^\nu_+$
		{satisfy}~\eqref{eq:kktregu} for $k\in {\cal K}$. By the structure of the matrix $A$ and Lipschitz continuity of ${\cal O}$,  $\{\gamma^{(k+1)} \}$ is bounded.
		{Due to the fact that $\lim_{k\rightarrow\infty}\mu^{(k)}=0$,
			it holds that ${\cal K}$ has infinitely many elements.  Since $z^*$ is an accumulation point of $\{z^{(k)}:k\in{\cal K}\}$, there exist subsequences $\{z^{(j_k)}\}$ of $\left\{z^{(k)}:k\in {\cal K}\right\}$ and  $\{\gamma^{(j_k+1)}\}$ of $\left\{\gamma^{(k)}: k\in {\cal K}\right\}$ such that  $\lim_{k \rightarrow \infty} z^{(j_k)}=z^*$ and $\lim_{k \rightarrow \infty} \gamma^{(j_k+1)}=\gamma^*$.
			By taking $k$ from the both sides of~\eqref{eq:kktregu} to infinity, we obtain that}
		\begin{equation}\label{eq:infty1}
			\begin{aligned}
				0=\liminf_{k\in{\cal K}, k\rightarrow\infty}\left\|\nabla_z\widetilde{\mathcal{O}}( z^{(k)},\mu^{(k)})+ A\zz \gamma^{{(k+1)}}\right\|,  \quad (Az^*-c)^\top \gamma^*=0, \quad A z^* -c \leq 0.
			\end{aligned}
		\end{equation}

		Since $\mathcal{O}(z)$ is a finite-sum composite max function, we have the gradient sub-consistency
		\cite{BCS,chen2012smoothing} that
		\begin{equation}\label{eq:gf}
			{\rm co}\left\{\lim _{k\rightarrow\infty  } \nabla_z \widetilde{\mathcal{O}}(z^{(j_k)},\mu^{(j_k)})\right\}\subset \partial \mathcal{O}(z^*).
		\end{equation}
		Hence, we have $0\in \partial Q(z^*) +A^\top \gamma^*$ and hence $z^*$ is a generalized KKT point of  \eqref{eq:reguauto7}.

		It follows Lemma \ref{thm:ggdsta} that $z^*$ is a  generalized d-stationary point of \eqref{eq:reguauto7}.
		
		If ${\cal O}$ is regular at $z^*$, then $\mathcal{O}^{\circ}(z^* ; d)= \mathcal{O}'(z^* ; d)$ for all $d \in {\cal T}_{\cal Z}(z^*)$. In this case $z^*$  is a d-stationary point of \eqref{eq:reguauto7}.
	\end{proof}

		\section{Numerical Experiments}\label{sec:num}
		
		In this section, we evaluate the numerical
		performance of our proposed SPG method for training
		the autoencoders. We first introduce the implementation details
		including the default settings for the parameters and the test problems.
		Then we report the numerical comparison
		among our SPG with a few state-of-the-art
		SGD based approaches, including the Adam~\cite{kingma2014adam}, the Adamax~\cite{kingma2014adam}, the Adadelata~\cite{zeiler2012adadelta}, the Adagrad~\cite{duchi2011adaptive}, the AdagradDecay~\cite{duchi2011adaptive}, and the Vanilla SGD~\cite{cramir1946mathematical}. {\color{black} Our numerical experiments will use both synthetic datasets and real dataset.}
		All the numerical experiments in this section are performed on a workstation with one Intel (R) Xeon (R) Silver 4110CPU (at 2.10GHz $\times$ 32) and 384GB of RAM running MATLAB R2018b under Ubuntu 18.10.}

	
	\subsection{Implementation Details}
	
	We first introduce the model parameters involved in
	\eqref{eq:reguauto7}. We set the regularization
	parameter $\lb_2=0.1$. If the sparsity of $V$ is pursued,
	we further set $\lb_1=0.0001$ in~\eqref{eq:reguauto7}.
	The penalty parameter $\beta$ takes the constant value $\frac{1}{N}$.
	
	We explain how to choose the algorithm parameters of SPG.
	The constants $\tau_{1}$ and $\tau_{2}$ take the values $0.5$ and $0.001$, 
	respectively. The initial value of the smoothing parameter is set as $\mu^{(0)}=0.001$. Although the global convergence of Algorithm 3.1 {requires sufficiently large parameters $\tau_{3}$
		and $L^{(0)}$, they can take much smaller values for better performance in practice.} Empirically, we set $\tau_{3}=1.1$ and $L^{(0)}=L_{*}:=\max\{1, \sqrt{N_{0} N_{1} / N}, \beta, N_{0} / 30\},$ unless
	otherwise stated.

	{Recall~\eqref{eq:kktregu}, we set $\mu^{(k)}\leq \varepsilon$ as the stopping criterion of~Algorithm \ref{alg:autoencoder1} and the tolerance $\varepsilon$ takes $10^{-7}$ 
		in the experiments unless otherwise stated.
		Besides, the maximum number of iterations in SPG is set as $4000$.
		Next, we describe the default initial guess in the following.
		The {\color{black} matrix} $W^{(0)}$ are randomly generated
		by $W^{(0)} = \mathrm{randn}(N_1,N_0)/N$, where $\mathrm{randn}(n, p)$ 
		stands for an $n \times p$ randomly generated matrix 
		under the standard Gaussian distribution. 
		Then, we set
		{$b^{(0)}=0$} and {$v^{(0)}_n=(W^{(0)}x_n)_{+}$} for all $n=1,2,\ldots,N$.}
	
	{\color{black}	For solving the quadratic programming subproblem~\eqref{eq:updateWBV1}, 
		we propose a new splitting and alternating method, abbreviated as SAMQP,
		to significantly increase the efficiency by exploiting the special structure. The details descriptions, as well as the comparison with exiting QP solvers, are put in Section \ref{sec:alg2}.}

	The MATLAB codes of the SGD based approaches including Adam, Adamax, Adadelata, Adagrad, AdagradDecay, and Vanilla SGD are downloaded from
	the Library~\cite{JMLR:v18:17-632}. {\color{black} These approaches directly solve problem \eqref{eq:auto}.} All of
	these algorithms are run under their defaulting settings. 
	The batch-size is set to $\max\{N/100,10\}$.

	There are two classes of test problems.
	The first class of data sets are generated randomly.
	Let
	$N$ and $N_{\mathrm{test}}$ be the numbers of training and test samplings,
	respectively. We use parameter $\epsilon_0>0$ to control the
	noise level. We construct the data sets by the following two ways, where $\mathrm{rand}(n, p)$ 
	stands for an $n \times p$ randomly generated matrix under uniform distribution in $[0,1]$.
	\begin{itemize}
		\item Data type 1: 
		we generate the data matrix $X_{\mathrm{all}}=(x_1,x_2,\ldots,x_{N+N_{\mathrm{test}}})$ by setting $x_i\sim\mathcal{N}(\vartheta, \Sigma_0\zz\Sigma_0)+\epsilon_0\mathcal{N}(0, 1)$ for all $i=1,2,\ldots,N+N_{\mathrm{test}}$, where $\vartheta= 0.5+\mathrm{randn}(N_0, 1)$ and $\Sigma_0= \mathrm{randn}(N_0, 1)$. We then set all negative elements of $X_{\mathrm{all}}$ to be zero. The first $N$
		and the last $N_{\mathrm{test}}$ columns of $X_{\mathrm{all}}$ are
		selected to be the training and test sets, respectively.		
		\item Data type 2: 
		we generate the data matrix $X_{\mathrm{all}}$ by $X_{\mathrm{all}}=\mathrm{rand}(N_0,N+N_{\mathrm{test}})+\epsilon_0\mathrm{randn}(N_0,N+N_{\mathrm{test}})$. We then set all negative elements of $X_{\mathrm{all}}$ to be zero. The first $N$
		and the last $N_{\mathrm{test}}$ columns of $X_{\mathrm{all}}$ are
		selected to be the training and test sets, respectively.		 
	\end{itemize}
	
	{In the numerical experiments, we will frequently use the following nine combinations of $(N,N_0,N_1)$ to determine the size of the randomly generated data sets. For convenience, we simply call these combinations
		``E.g. 1 to 9" as follows. Other combinations will be stated otherwise.
		\begin{itemize} 
			\item[(1)] E.g. 1: $N=50$, $N_1=50$, $N_0=25$;\hspace{0.158in} (2) E.g. 2: $N=50$, $N_1=100$, $N_0=25$;
			\item[(3)] E.g. 3: $N=50$, $N_1=100$, $N_0=40$;\quad (4) E.g. 4: $N=50$, $N_1=10$, $N_0=5$;
			\item[(5)] E.g. 5: $N=75$, $N_1=10$, $N_0=5$;\hspace{0.23in} (6) E.g. 6: $N=100$, $N_1=10$, $N_0=5$;
			\item[(7)] E.g. 7: $N=100$, $N_1=100$, $N_0=25$;\hspace{0.023in} (8) E.g. 8: $N=150$, $N_1=10$, $N_0=5$;
			\item[(9)] E.g. 9: $N=150$, $N_1=20$, $N_0=10$.
	\end{itemize}}

	The second class of test problems are selected from the MNIST datasets~\cite{lecun1998mnist,lecun1998gradient} {consisting
		of $10$-classes handwritten digits
		with the size $28\times 28$, namely, $N_0 = 784$.
		In practice, we randomly pick up data entries
		from each class of MNIST under uniform distribution.}
	
	
	We record the following four measurements, the function value of~\eqref{eq:reguauto7} (``FVal''), 
	the average feasibility violation (``FeasVi"), the training error (``TrainErr")  and the test error (``TestErr") of~\eqref{eq:reguauto2}, which are 
	{denoted} by $\mathcal{O}(z)$, 
	$\frac{1}{NN_1}\sum_{n=1}^N \|v_n-(W x_n+b_{1})_{+}\|_1$, $\mathcal{F}(z)$, and $\frac{1}{N_{\mathrm{test}}}\sum_{n=N_{\mathrm{test}}+1}^{N+N_{\mathrm{test}}}\left\|(W\zz v_n+b_2)_{+}-x_n\right\|_2^2$, respectively. We use ``Noise"  and ``Time" to represent the value of $\epsilon_0$ and the CPU  Time in {seconds}, respectively.
	
	\subsection{Properties of SPG}
	In this subsection, we investigate the numerical performance of SPG
	in solving problems with randomly generated data sets.
	We first study the convergence properties of SPG.
	{The test problem is generated by data type 1 with parameter combination E.g. 6.
		and $\epsilon_0=0.05$.}
	The penalty parameter $\beta$ takes its default setting.
	The numerical results of SPG with randomly initial guess is present in Figure \ref{fig:spgnorm1}. We can learn from Figure \ref{fig:spgnorm1} that (i)
	{all of the training error, the test error, the function value of \eqref{eq:reguauto7}}
	decrease in a same order; 
	(ii)
	the feasibility reduces to its tolerance rapidly; (iii) the smoothing parameter sequence
	$\{\mu^{(k)}\}$ converges almost linearly to zero.
	
	\begin{figure}[htbp!]
		\vspace{-5mm}
		\centering
		\setcounter{subfigure}{0}
		\subfloat{\includegraphics[width=43mm]{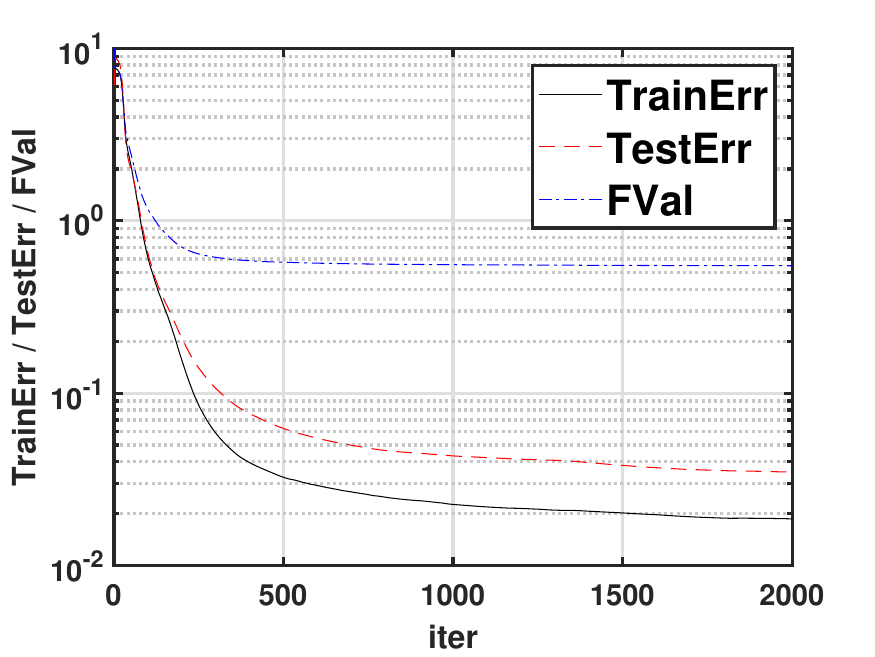}}
		\subfloat{\includegraphics[width=43mm]{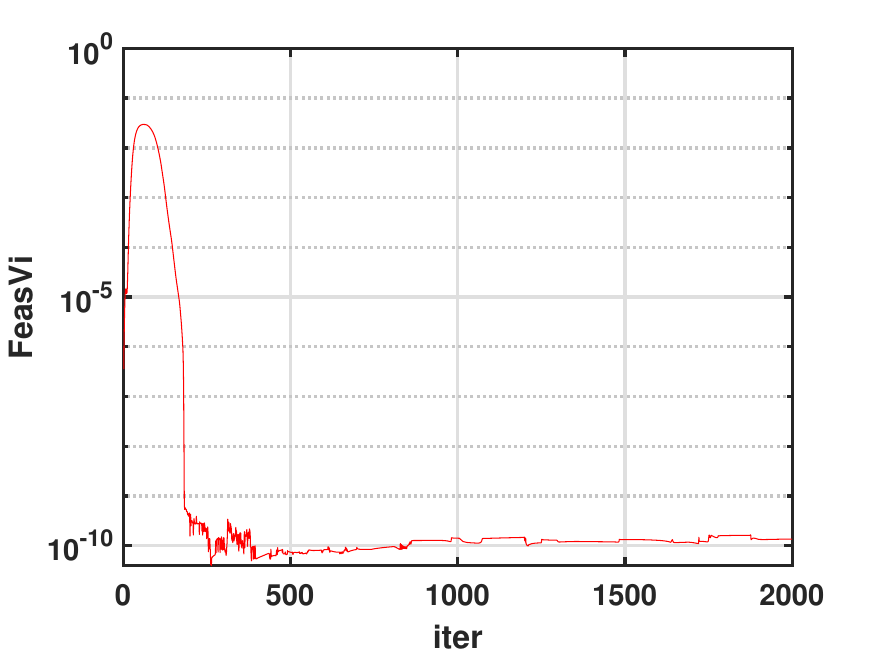}}
		\subfloat{\includegraphics[width=43mm]{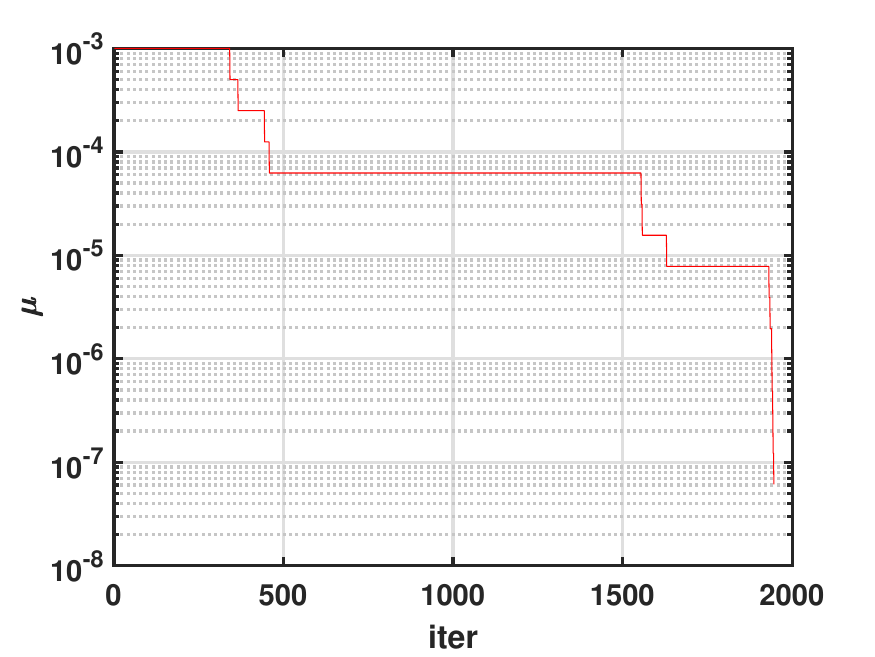}}\\s
		\caption{{Algorithm performance} of SPG}\label{fig:spgnorm1}		\vspace{-5mm}
	\end{figure}
	Secondly, we compare SPG with different choices of $L^{(0)}$ on a group of randomly generated data sets with data type 1, $N_{\mathrm{test}}=0$ and $\epsilon_0=0.05$.
	{We select three different $\beta$ and four parameter combinations.}
	The numerical results are shown in~Figure \ref{fig:Lu1}.
	We can learn from Figure \ref{fig:Lu1} that SPG may diverge
	if $L^{(0)}$
	is not sufficiently large, particularly
	if $\beta$ is large. When $\beta$ is small, the performance of SPG
	is not very sensitive to the choice of $L^{(0)}$. We also find that
	bigger $L^{(0)}$ usually leads to slow convergence. Hence,  we can conclude
	that a suitably selected $L^{(0)}$, such as our default setting, is
	important to SPG.
	
	\begin{figure}[hbp!]
		\vspace{-5mm}
		\centering
		\setcounter{subfigure}{0}
		\subfloat[\scriptsize $\beta=1/N$, E.g. 1]{\includegraphics[width=42mm]{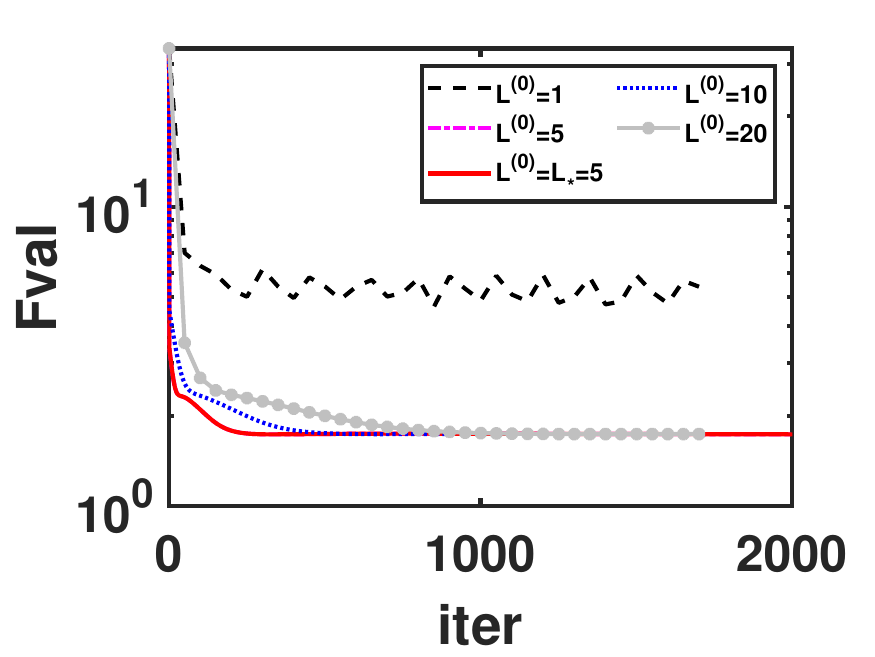}}
		\subfloat[\scriptsize $\beta=1$, E.g. 1]{\includegraphics[width=42mm]{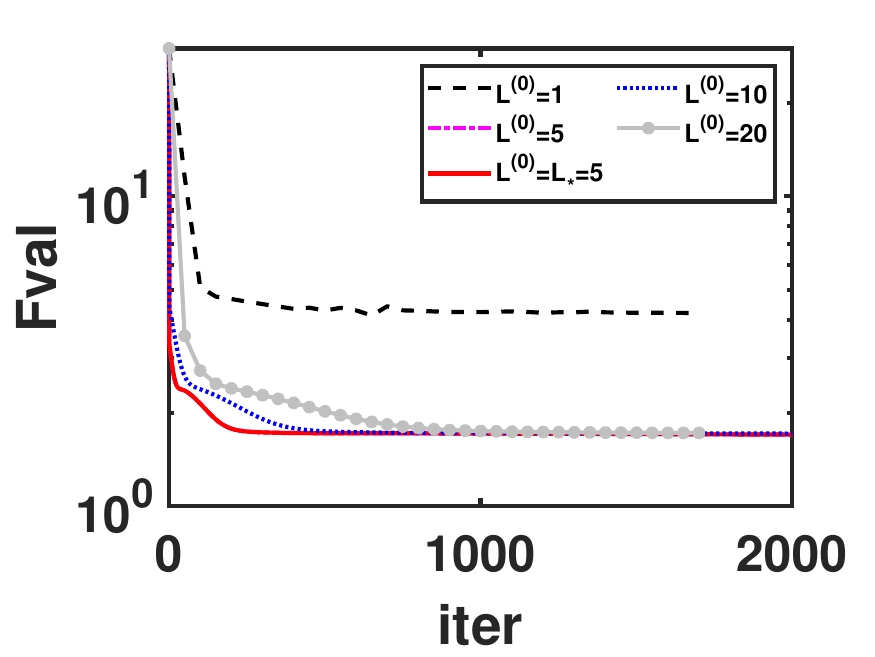}}
		\subfloat[\scriptsize $\beta=10$, E.g. 1]{\includegraphics[width=42mm]{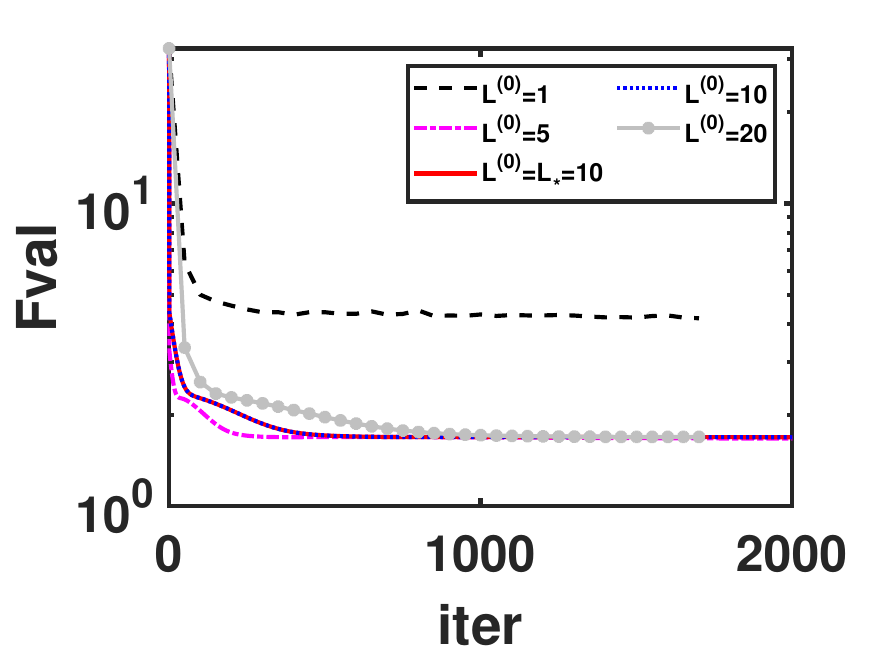}}\\
		\subfloat[\scriptsize $\beta=1/N$, E.g. 2]{\includegraphics[width=42mm]{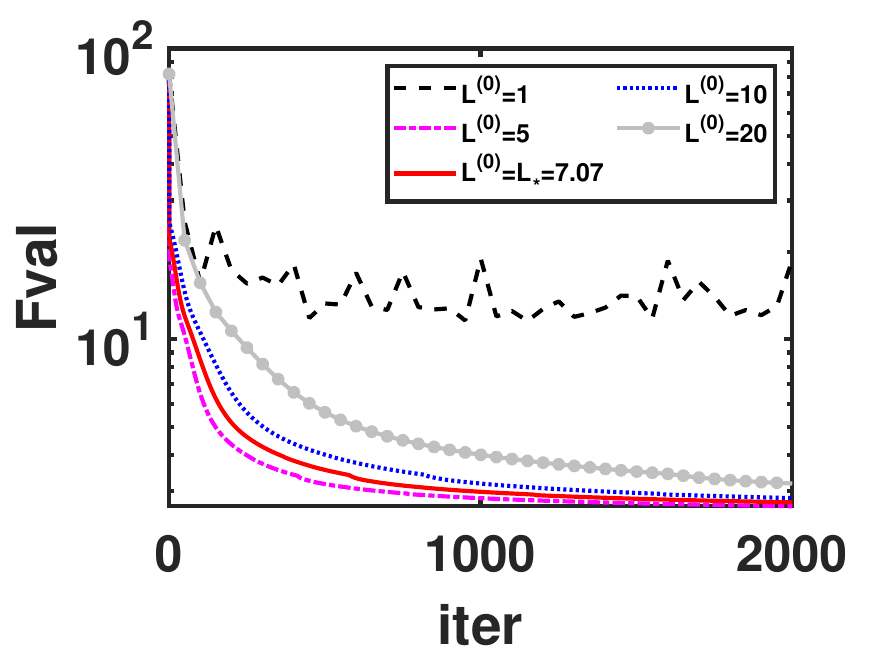}}	
		\subfloat[\scriptsize $\beta=1$, E.g. 2]{\includegraphics[width=42mm]{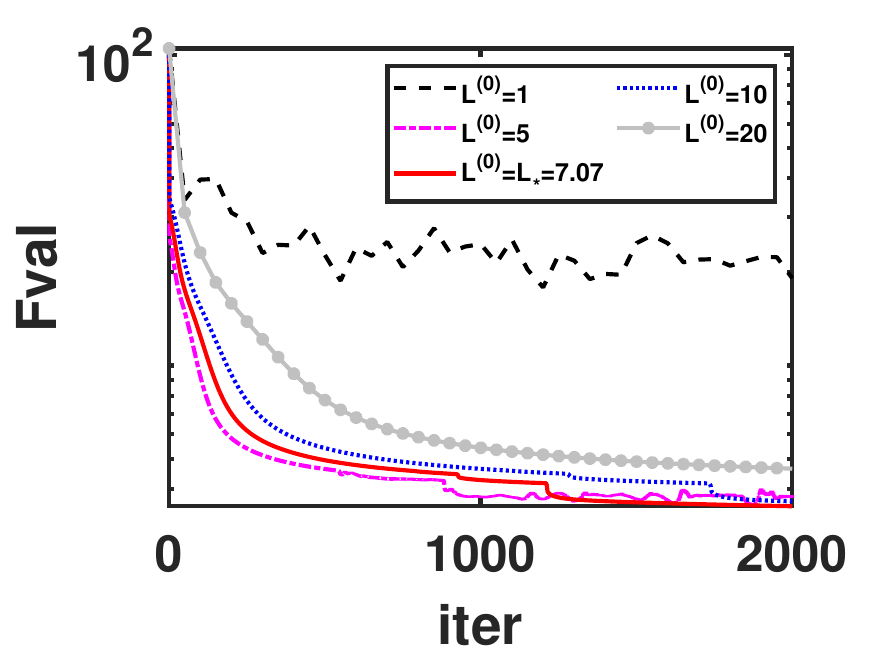}}	
		\subfloat[\scriptsize $\beta=10$, E.g. 2]{\includegraphics[width=42mm]{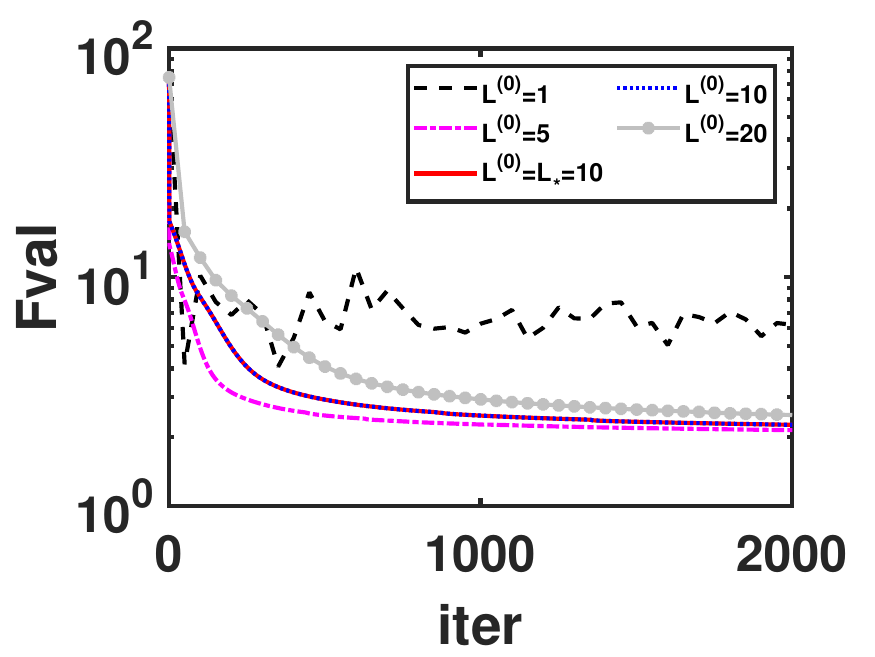}}	\\
		\subfloat[\scriptsize $\beta=1/N$, E.g. 3]{\includegraphics[width=42mm]{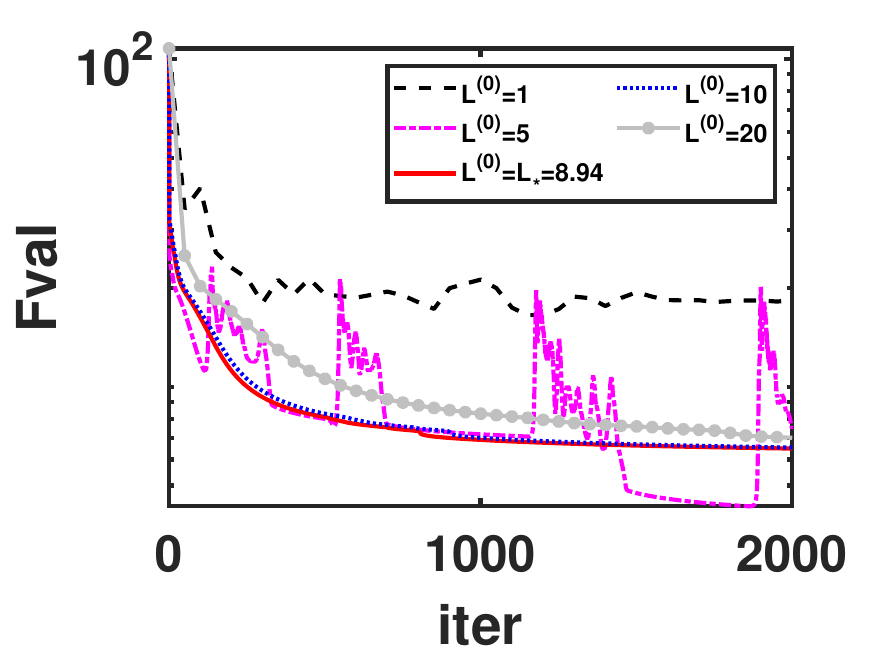}}	
		\subfloat[\scriptsize $\beta=1$, E.g. 3]{\includegraphics[width=42mm]{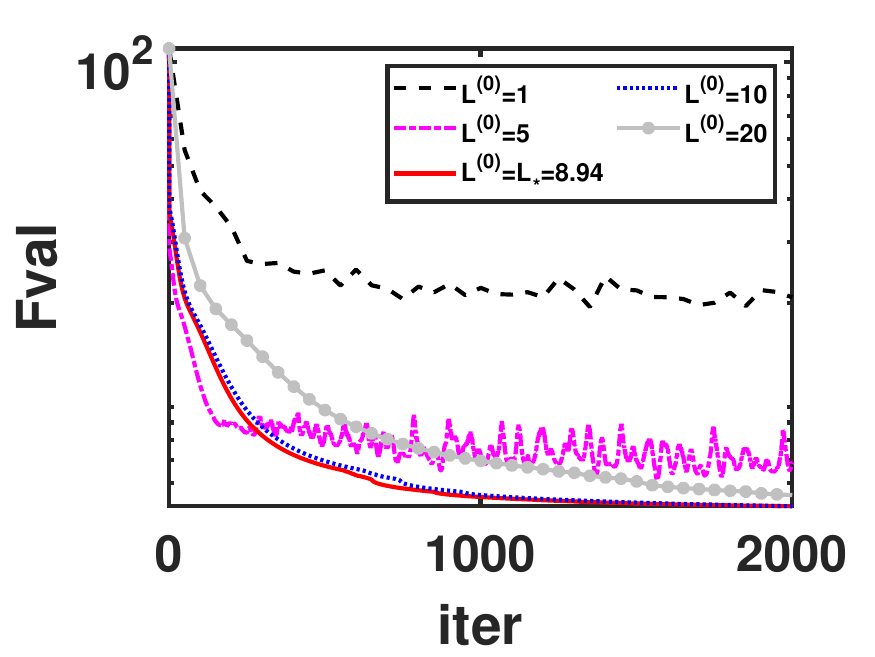}}	
		\subfloat[\scriptsize $\beta=10$, E.g. 3]{\includegraphics[width=42mm]{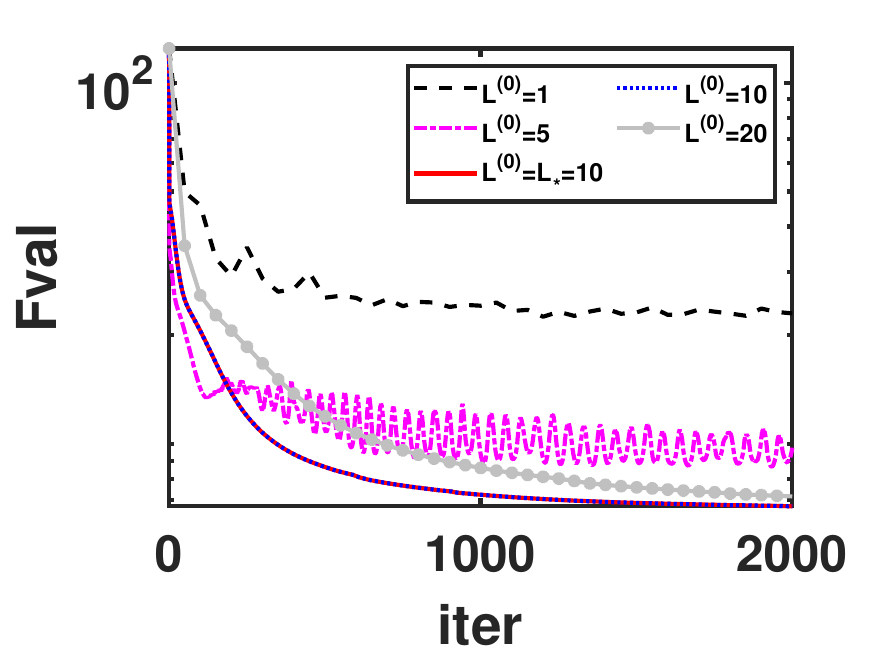}}	\\
		\subfloat[\scriptsize $\beta=1/N$, E.g. 7]{\includegraphics[width=42mm]{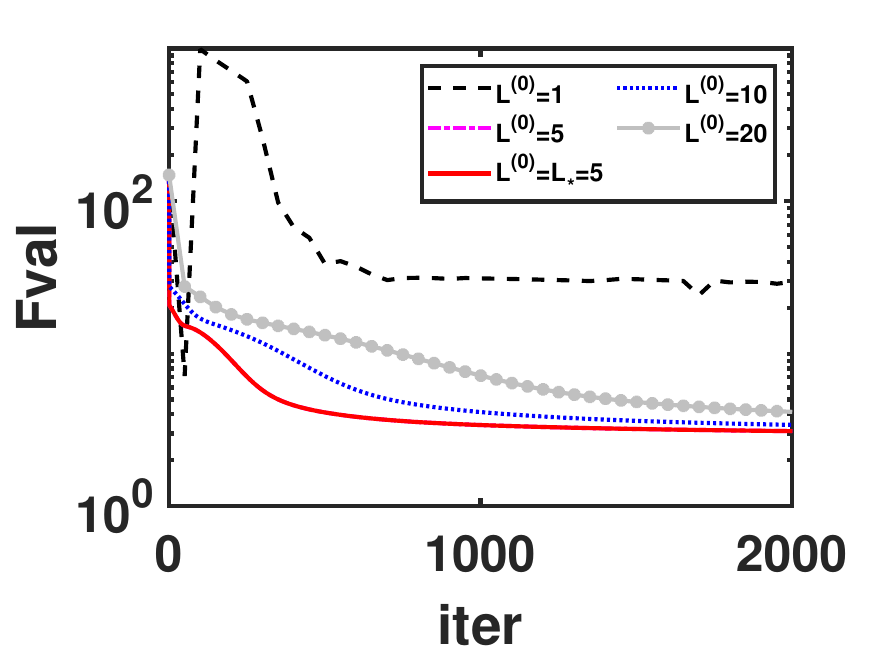}}	
		\subfloat[\scriptsize $\beta=1$, E.g. 7]{\includegraphics[width=42mm]{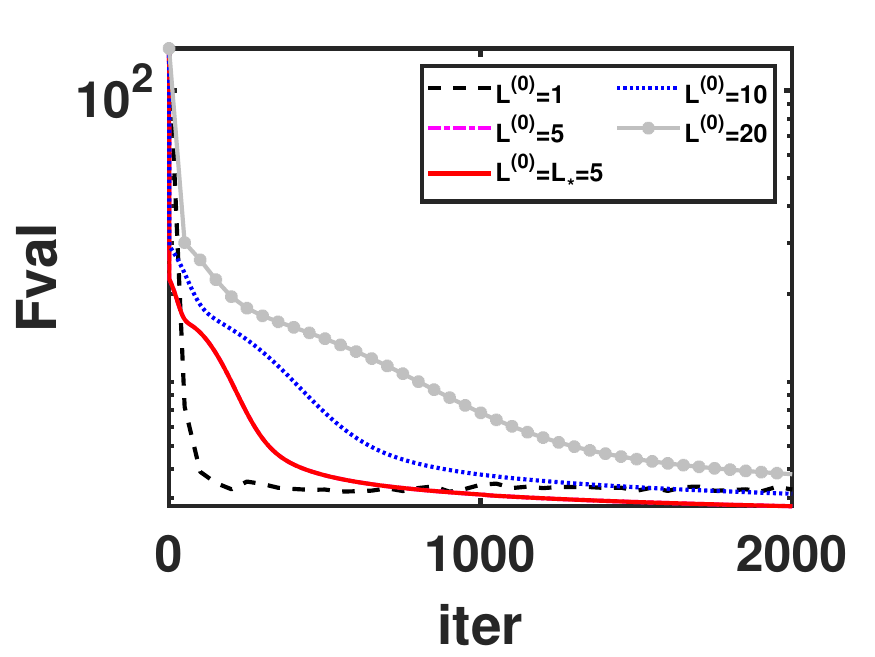}}	
		\subfloat[\scriptsize $\beta=10$, E.g. 7]{\includegraphics[width=42mm]{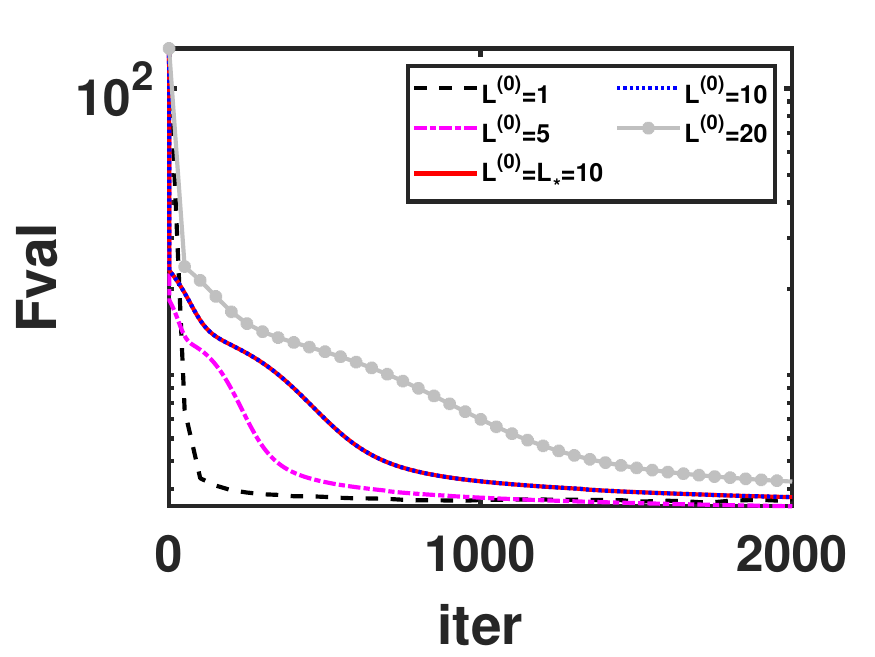}}	\\
		\caption{Comparison of SPG with varying $L^{(0)}$.}\label{fig:Lu1}
		\vspace{-5mm}
	\end{figure}
	
	Finally, we compare SPG with different choices of $\beta$ on a group of randomly generated data sets with $N_{\mathrm{test}}=0$, $\epsilon_0=0.05$ and data type 1. We choose $\beta$ from
	the set $\{\frac{1}{N}, \frac{10}{N}, \frac{1}{10N}, 1, 10\}$. 
	We record how TrainErr, the FVal and the FeasVi decrease
	through the iteration. {The numerical results with parameter combinations E.g. 4 and E.g. 8}
	are illustrated in Figures \ref{fig:beta1} and \ref{fig:beta2}, respectively.
	We can learn from these two figures that the
	bigger $\beta$ always leads to slower convergence.
	
	\begin{figure}[htbp!]
		\vspace{-5mm}
		\centering
		\setcounter{subfigure}{0}
		\subfloat{\includegraphics[width=42mm]{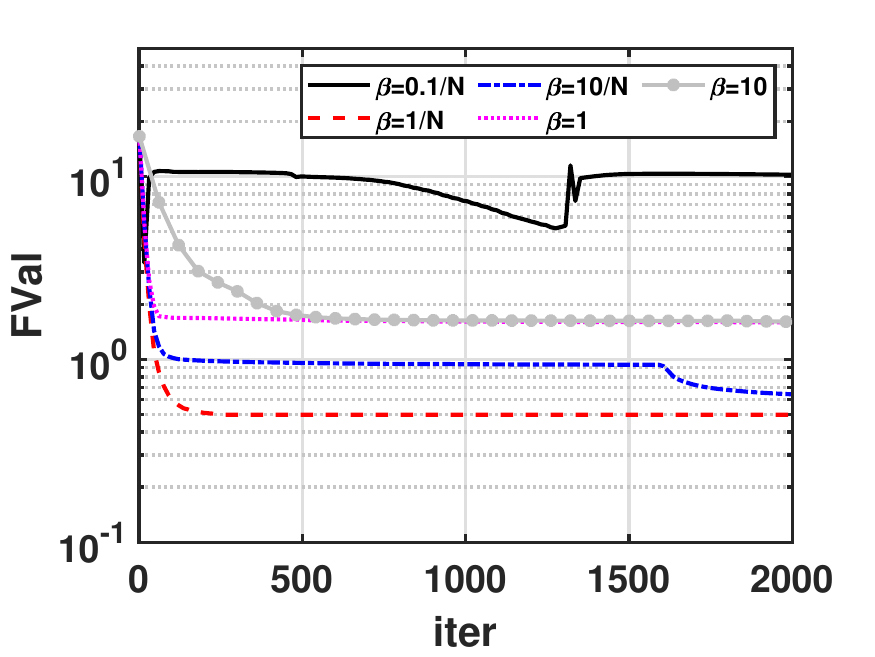}}
		\subfloat{\includegraphics[width=42mm]{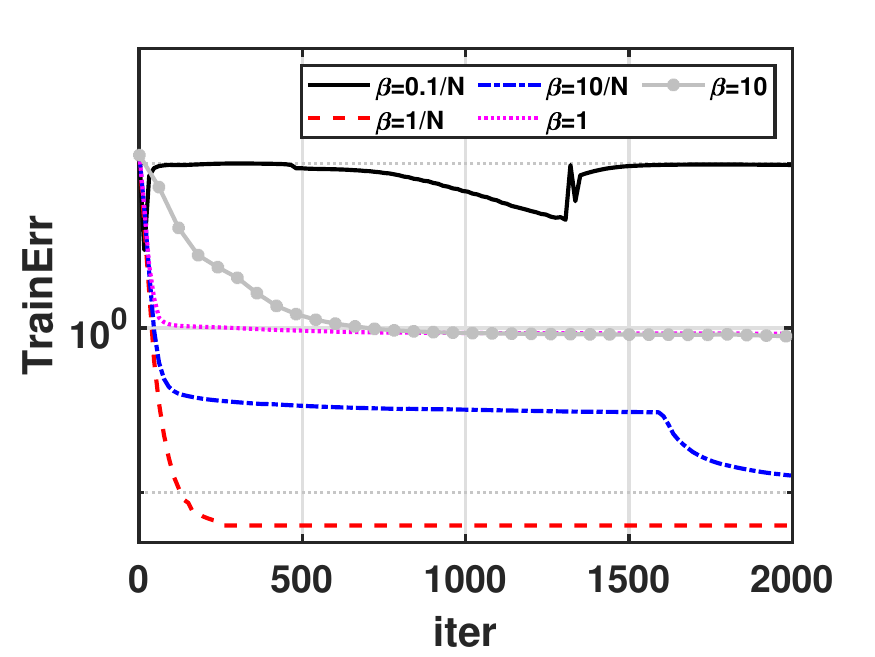}}
		\subfloat{\includegraphics[width=42mm]{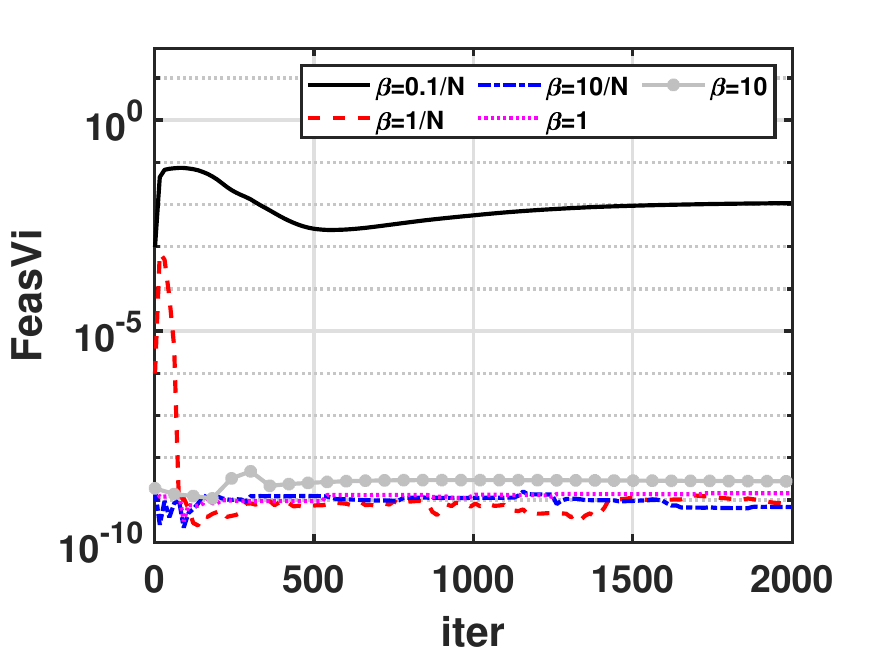}}
		\caption{Comparison of SPG with varying $\beta$, {\color{black} $L^{(0)}=L_{\star}$}.}\label{fig:beta1}
		\vspace{-5mm}
	\end{figure}
	
	\begin{figure}[htbp!]
		\vspace{-5mm}
		\centering
		\setcounter{subfigure}{0}
		\subfloat{\includegraphics[width=42mm]{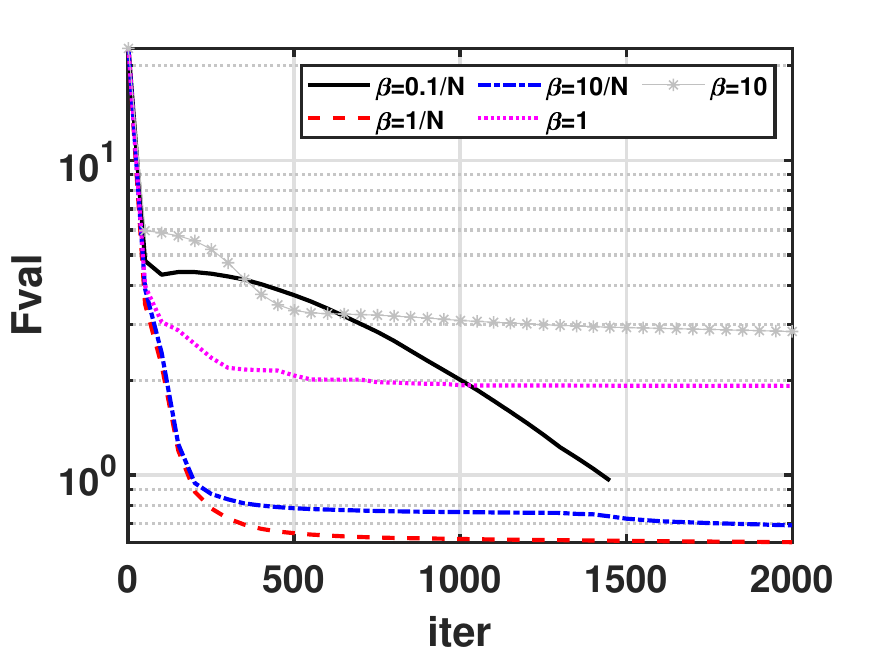}}
		\subfloat{\includegraphics[width=42mm]{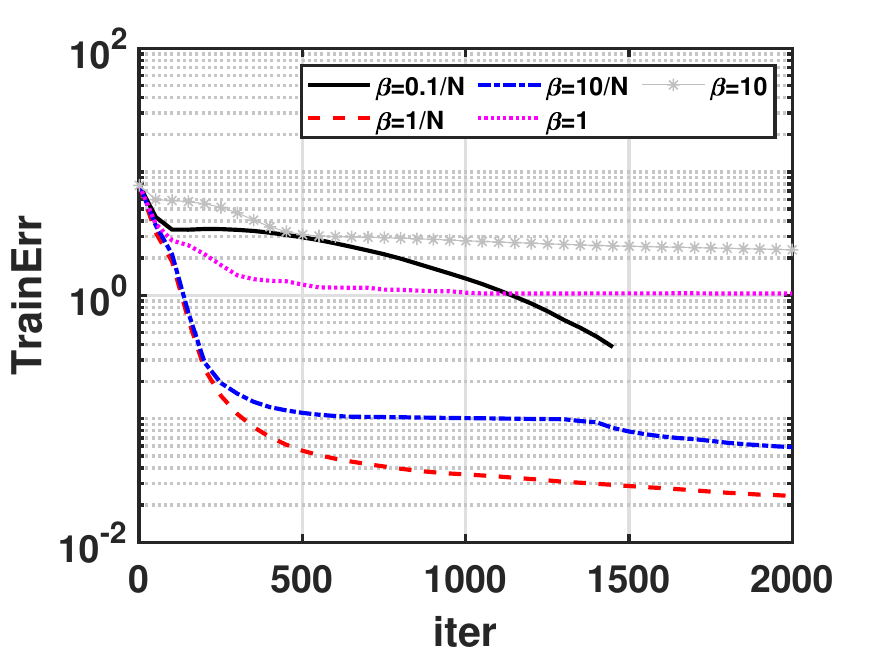}}
		\subfloat{\includegraphics[width=42mm]{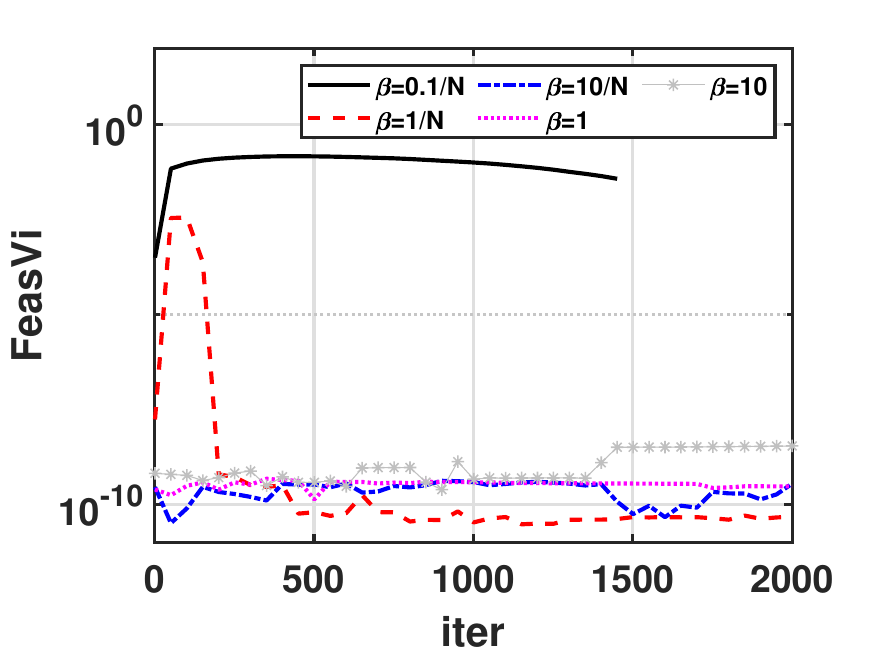}}\\
		\caption{Comparison of SPG with varying $\beta$, {\color{black} $L^{(0)}=L_{\star}$}.}\label{fig:beta2}
		\vspace{-5mm}
	\end{figure}
	
	\subsection{Comparison with Other Methods}
	
	In this subsection, we compare SPG with the existing SGD-based approaches.
	
	We choose two groups of data sets
	randomly generated by the two data types described in Subsection 4.1, and the
	numerical results are demonstrated in {Figures \ref{fig:diffalg1} and \ref{fig:diffalg2},} respectively.
	Here, all algorithms start from the same random initial guess. We set $\epsilon_0=0.05$ and $N_{\mathrm{test}}=30$. {The chosen parameter combinations are given in the subtitles of 
		{these two figures}.}

	\begin{figure}[htbp!]
		\vspace{-5mm}
		\centering
		\setcounter{subfigure}{0}
		\subfloat[E.g. 5]{\includegraphics[width=42mm]{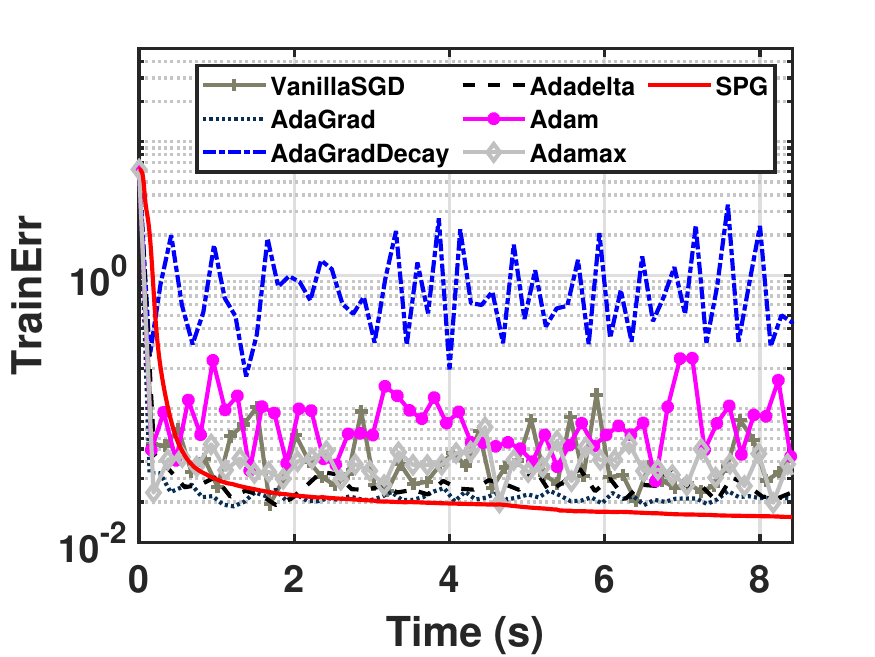}}
		\subfloat[E.g. 6]{\includegraphics[width=42mm]{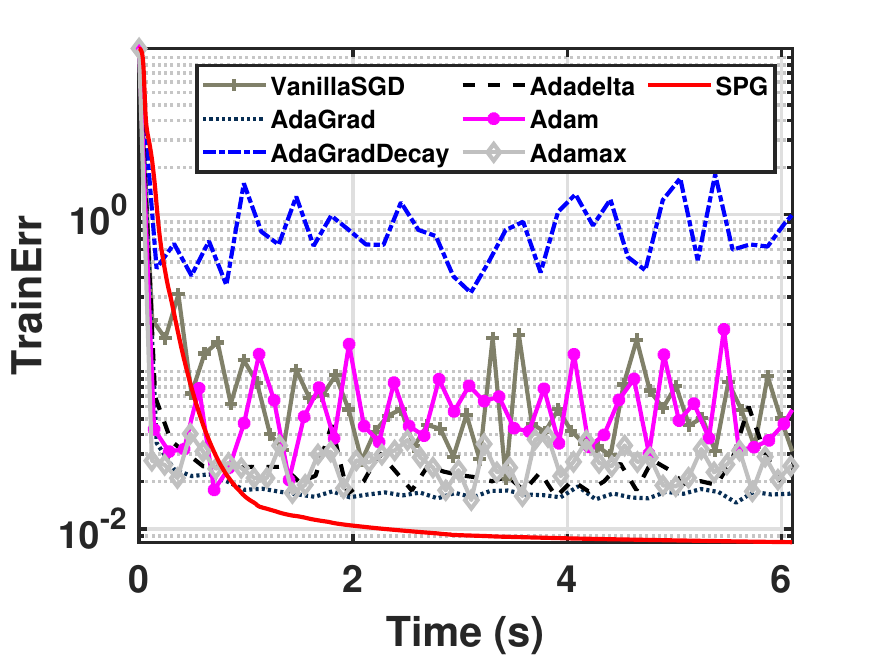}}
		\subfloat[E.g. 9]{\includegraphics[width=42mm]{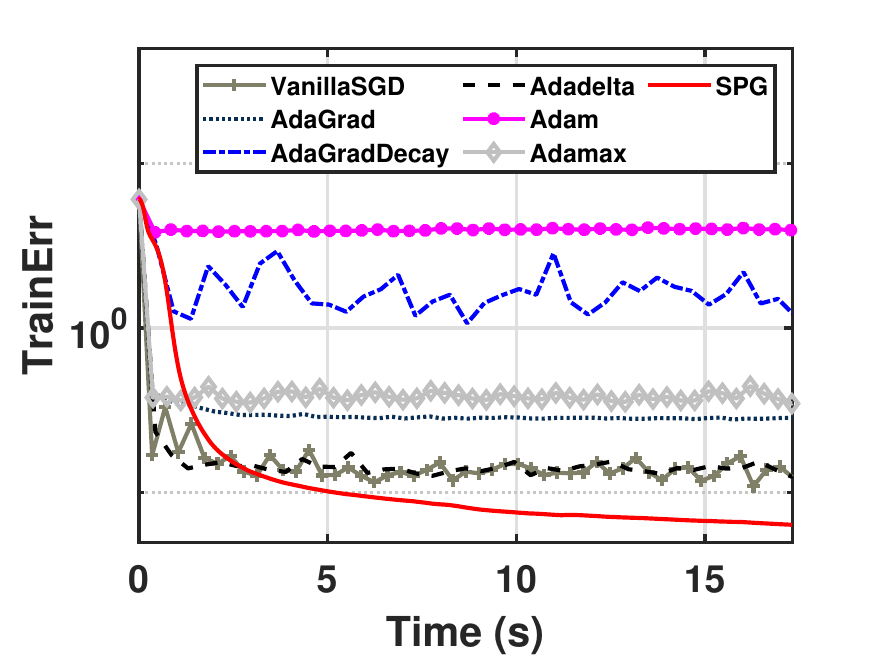}}
		\\
		\subfloat[E.g. 5]{\includegraphics[width=42mm]{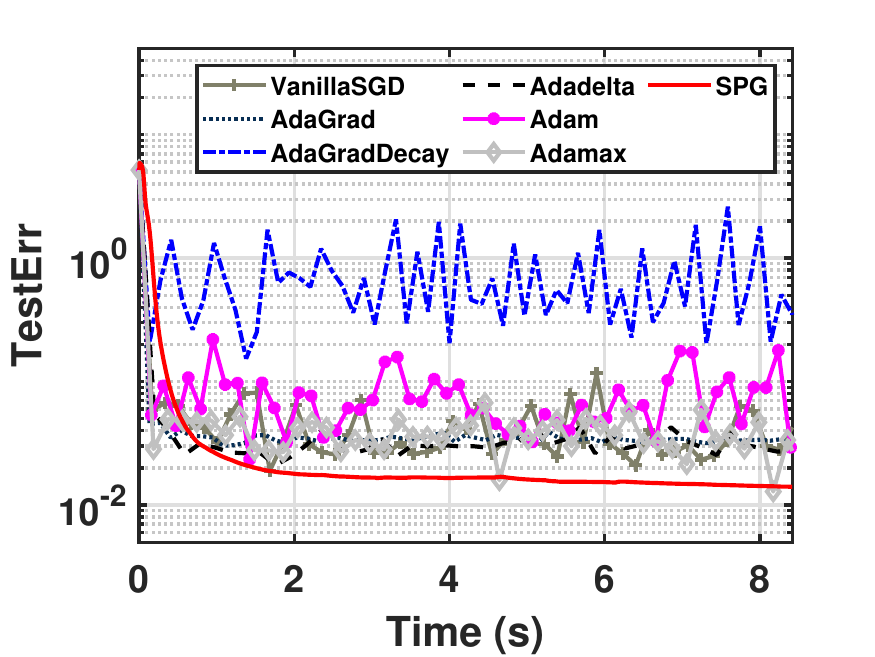}}
		\subfloat[E.g. 6]{\includegraphics[width=42mm]{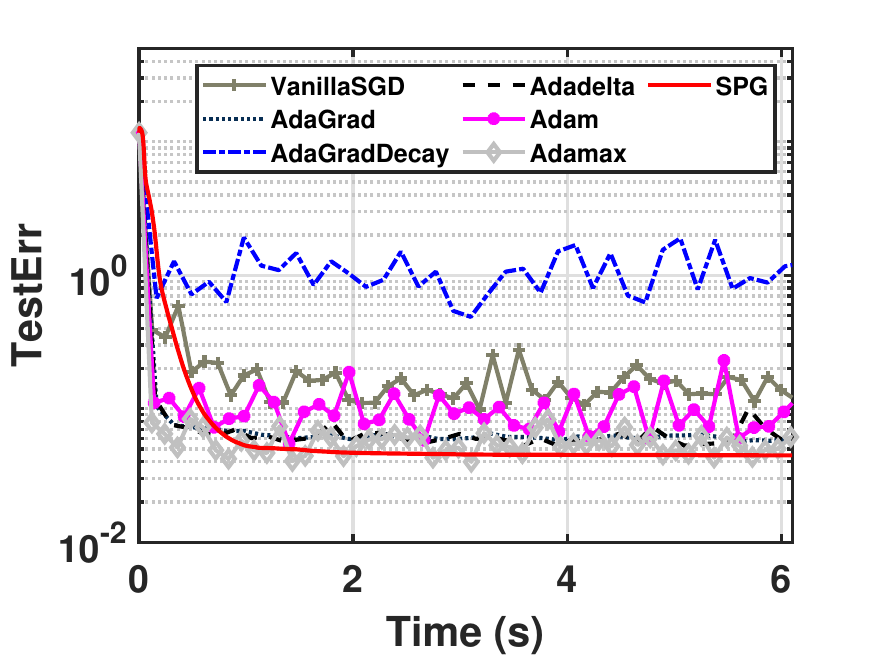}}
		\subfloat[E.g. 9]{\includegraphics[width=42mm]{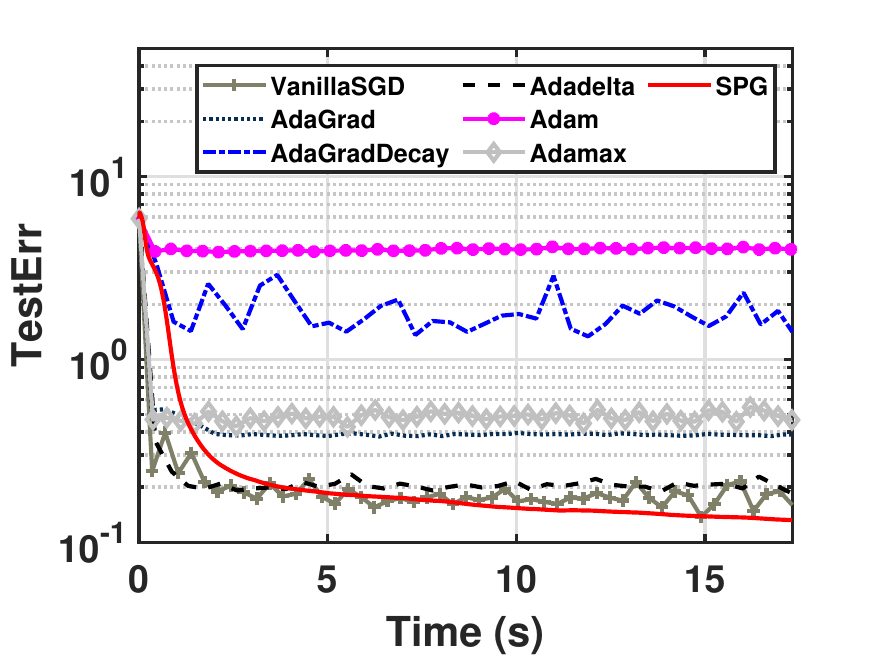}}
		\caption{Comparison among SPG and SGD-based approaches with data type 1.}\label{fig:diffalg1}
		\vspace{-5mm}
	\end{figure}
	
	\begin{figure}[htbp!]
		\vspace{-5mm}
		\centering
		\setcounter{subfigure}{0}
		\subfloat[E.g. 5]{\includegraphics[width=42mm]{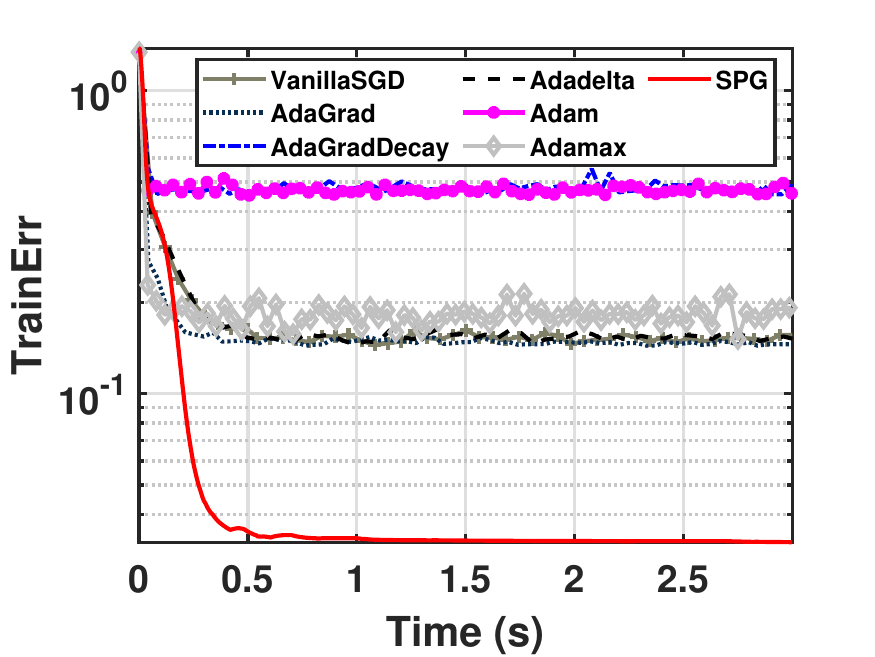}}
		\subfloat[E.g. 6]{\includegraphics[width=42mm]{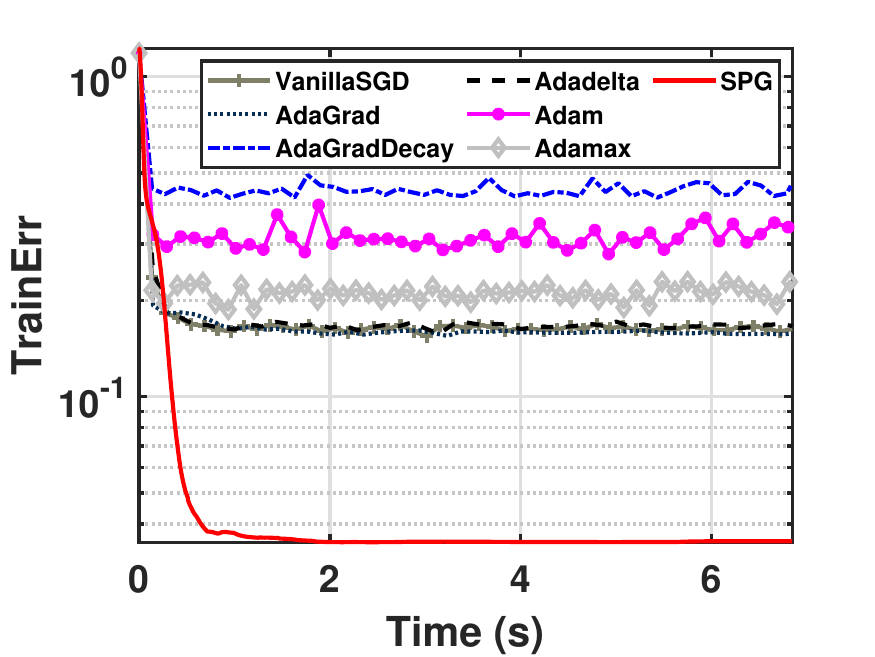}}
		\subfloat[E.g. 9]{\includegraphics[width=42mm]{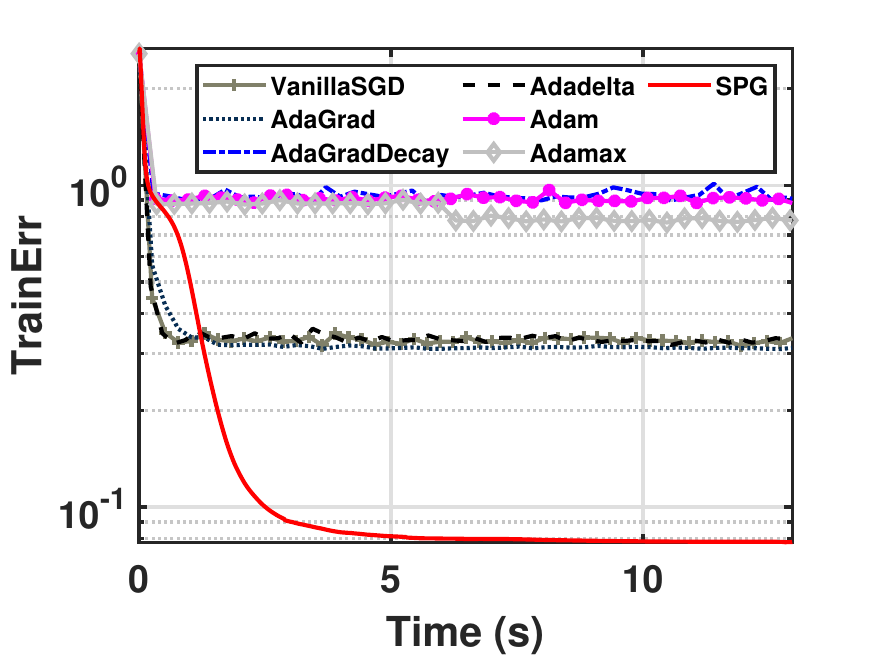}}
		\\
		\subfloat[E.g. 5]{\includegraphics[width=42mm]{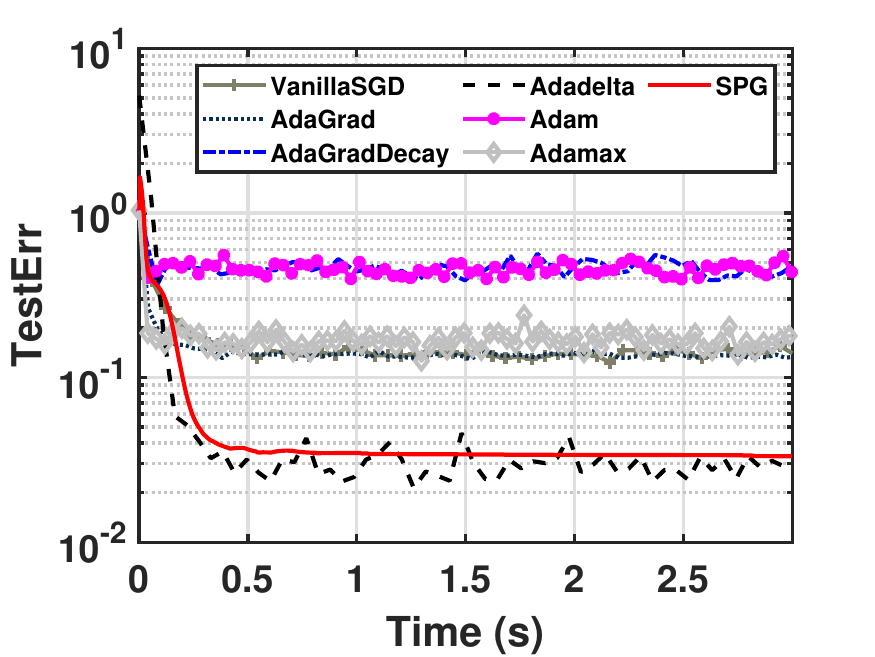}}
		\subfloat[E.g. 6]{\includegraphics[width=42mm]{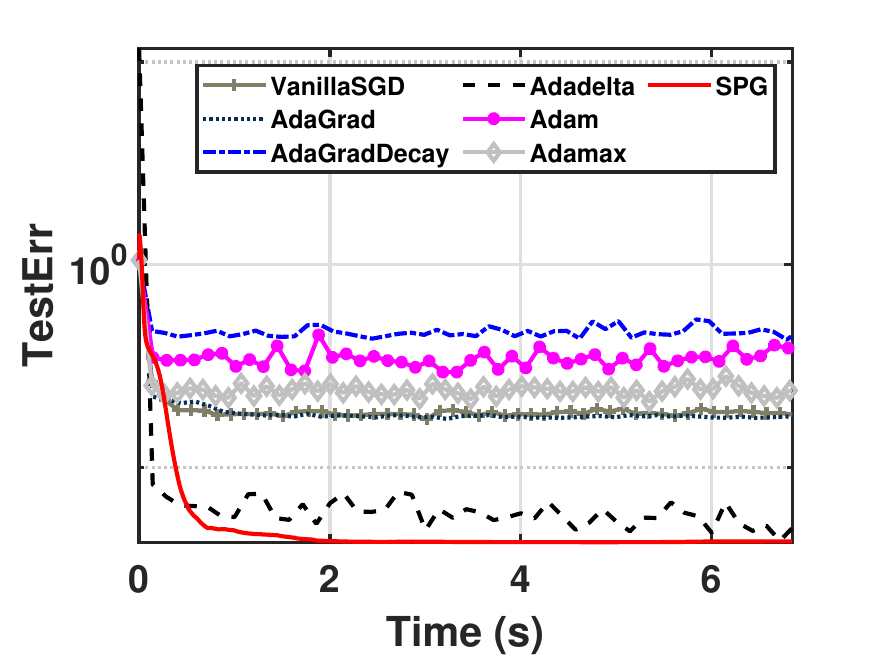}}
		\subfloat[E.g. 9]{\includegraphics[width=42mm]{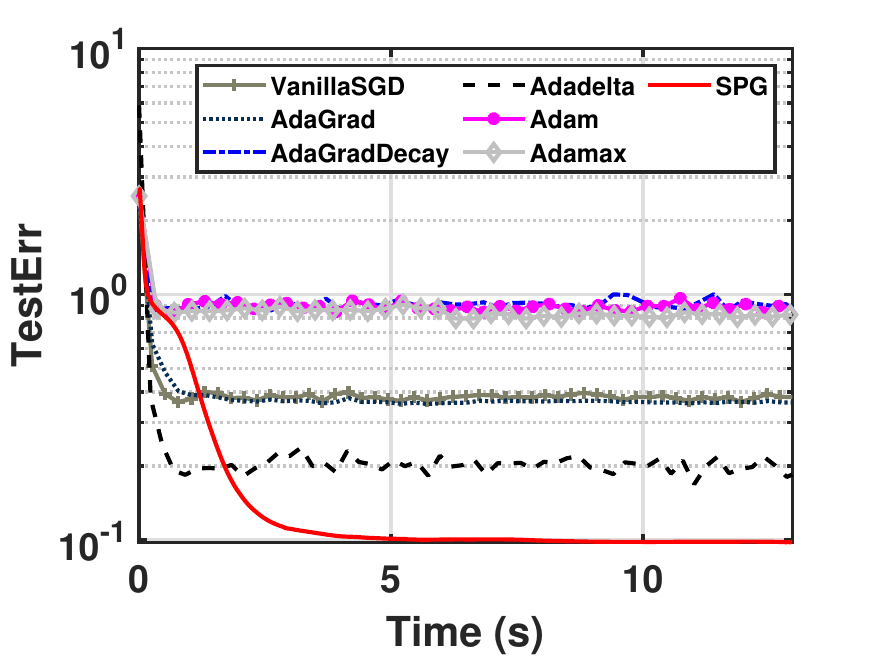}}
		\caption{Comparison among SPG and SGD-based approaches with data type 2.}\label{fig:diffalg2}
		\vspace{-5mm}
	\end{figure}
	
	We can learn from {Figures \ref{fig:diffalg1} and \ref{fig:diffalg2}} 
	that SPG reduces the training and test errors slower 
	than some other algorithms at very beginning.
	It can reach a lower residual than the others finally.
	
	It can also be observed from {these two figures} that Adadelta
	outperforms the other SGD-based approaches in the aspects of efficiency and solution quality.
	Therefore, we consider to use Adadelta as a
	pre-process to accelerate SPG. More specifically, we first run Adadelta for $1000$ epochs
	and then switch to SPG. We call the consequent hybrid algorithm
	SPG-ADA.
	In the following tests, such pre-processing will be the default setting of SPG.
	
	We select a new group of data sets randomly
	generated by data type 1 with $N=1000$, $N_{\mathrm{test}}=300$, $\epsilon_0=0.05$,
	and different combinations of $N_0$ and $N_1$.
	The iteration number of Adadelta is set as $10000$.
	We run SPG-ADA and Adadelta $100$ times and
	record the average output values in~Table \ref{tab:s2s100}.
	We can learn from~Table \ref{tab:s2s100} that SPG-ADA
	can obtain better training and test errors than Adadelta in comparable CPU time.
	
	\begin{table}\footnotesize
		\centering
		\caption{Comparison between SPG-ADA and Adadelta with $N=1000$.}\label{tab:s2s100}
		\begin{tabular}{|c|c|c|c|c|c|c|c|c|c|c|}
			\hline
			&&\multicolumn{4}{|c|}{SPG-ADA}&\multicolumn{3}{|c|}{Adadelta}\\
			\hline
			$N_0$ & $N_1$ & TrainErr & TestErr & FeaErr&  Time  & TrainErr & TestErr  & Time \\
			\hline
			5&20 & 3.297e-02 & 3.636e-02 & 1.234e-11& 8.758 & 5.518e-02 & 5.847e-02& 3.044 \\
			\hline
			5&30 & 2.974e-02 & 3.103e-02 & 5.599e-12& 10.592 & 5.470e-02 & 5.566e-02& 3.623\\
			\hline
			5&40 & 2.960e-02 & 3.200e-02 & 7.238e-12& 15.206 & 5.474e-02 & 5.632e-02& 3.786 \\
			\hline
			10&40 & 6.708e-02 & 7.727e-02 & 1.140e-11& 19.184 & 1.257e-01 & 1.341e-01& 5.590\\
			\hline
			10&60 &6.867e-02 & 7.863e-02 & 5.138e-11& 22.599 & 1.348e-01 & 1.436e-01& 6.149\\
			\hline
			10&80 & 8.105e-02 & 9.057e-02 & 8.814e-11& 25.701 & 1.364e-01 & 1.441e-01& 7.169  \\
			\hline
			20&80 & 1.824e-01 & 2.200e-01 & 3.020e-12& 33.962 & 3.766e-01 & 4.265e-01& 8.992 \\
			\hline
			20&120 & 1.135e-01 & 2.611e-01 & 3.634e-12& 38.191 & 4.051e-01 & 4.566e-01& 12.275 \\
			\hline
			20&160 & 1.946e-01 & 2.380e-01 & 2.181e-12& 72.942 & 3.746e-01 & 4.240e-01& 20.268\\
			\hline
		\end{tabular}
	\end{table}

	\subsection{Tests on MNIST}

	In this subsection, we investigate the numerical comparison among
	SPG-ADA and Adadelta in solving problems arising from the real data set MNIST.
	
	Firstly, we set $N=100$ and $N_1=500$.
	We can find the reconstruction results corresponding to the
	autoencoder solutions obtained by SPG-ADA and Adadelta
	in~Figure \ref{fig:consalg} (a)-(b), respectively.
	We can conclude that SPG-ADA can reach the comparable
	reconstruction quality as Adadelta.
	In addition, we also present the reconstruction result derived by Adam, 
	{as a failure case}. 
	Therefore, we exclude Adam in the {last numerical experiment}.
	
	\begin{figure}[htbp!]	
		\vspace{-5mm}
		\centering
		\setcounter{subfigure}{0}
		\subfloat[SPG-ADA]{\includegraphics[width=60mm]{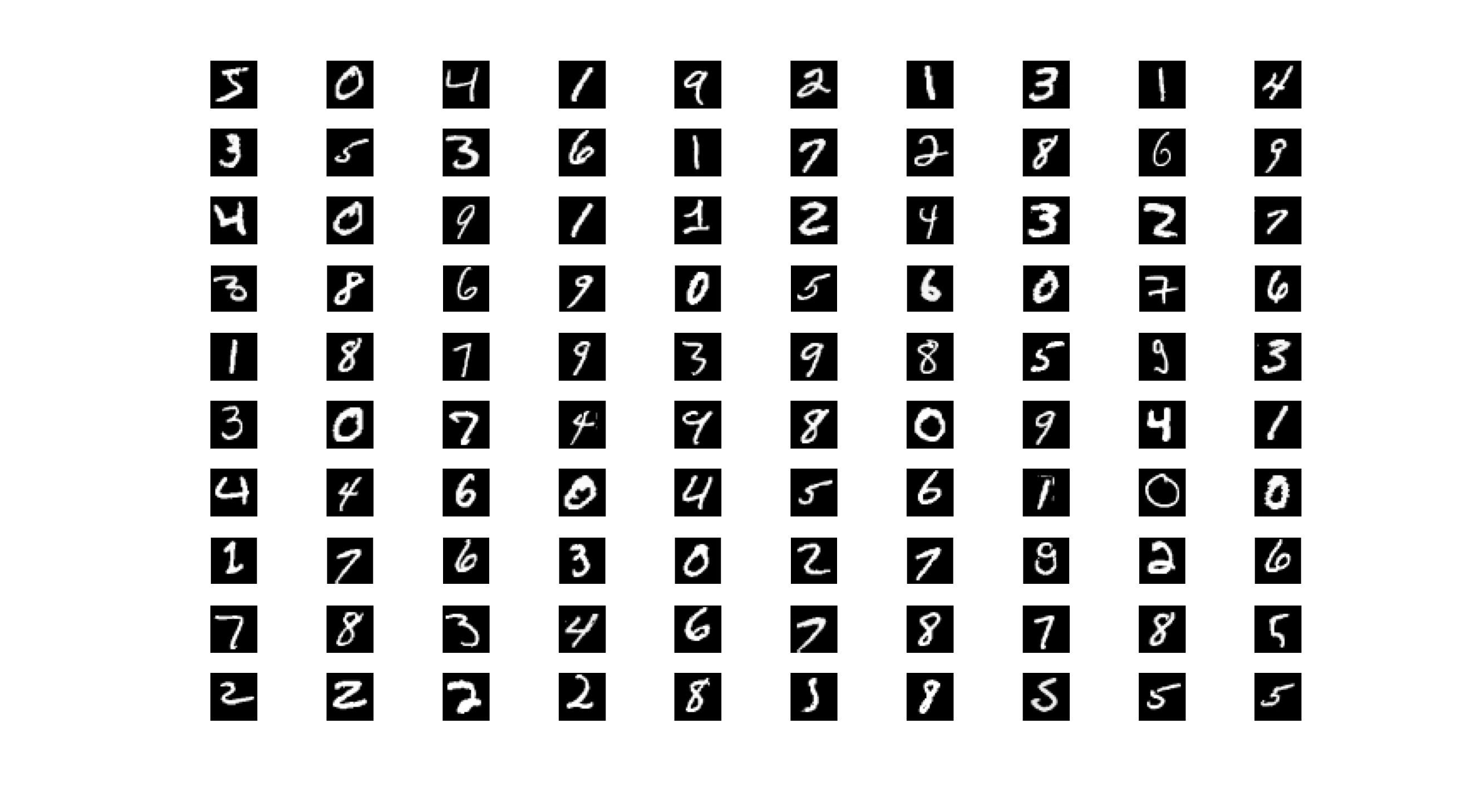}}
		\subfloat[Adadelta]{\includegraphics[width=60mm]{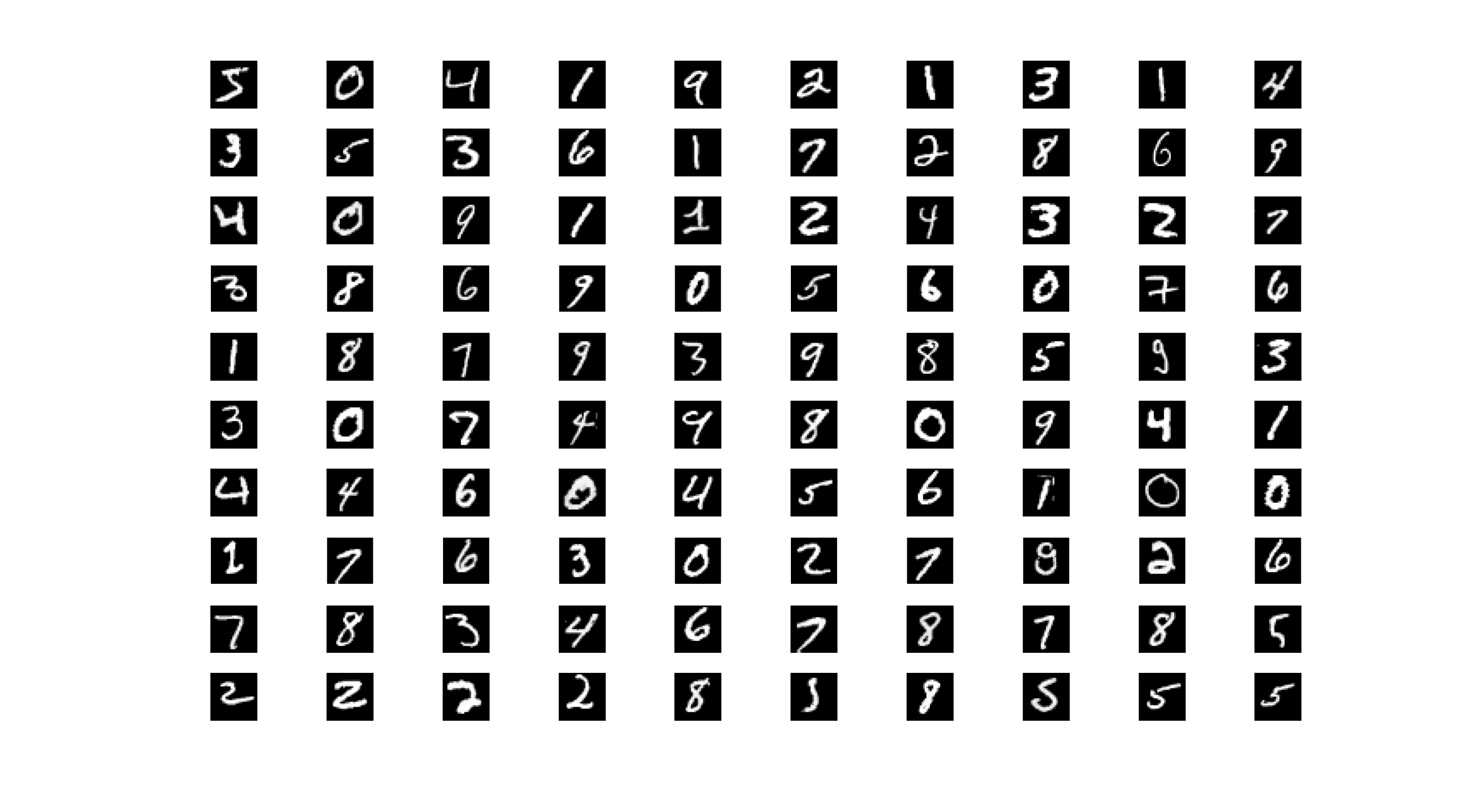}}\\
		\subfloat[Adam]{\includegraphics[width=60mm]{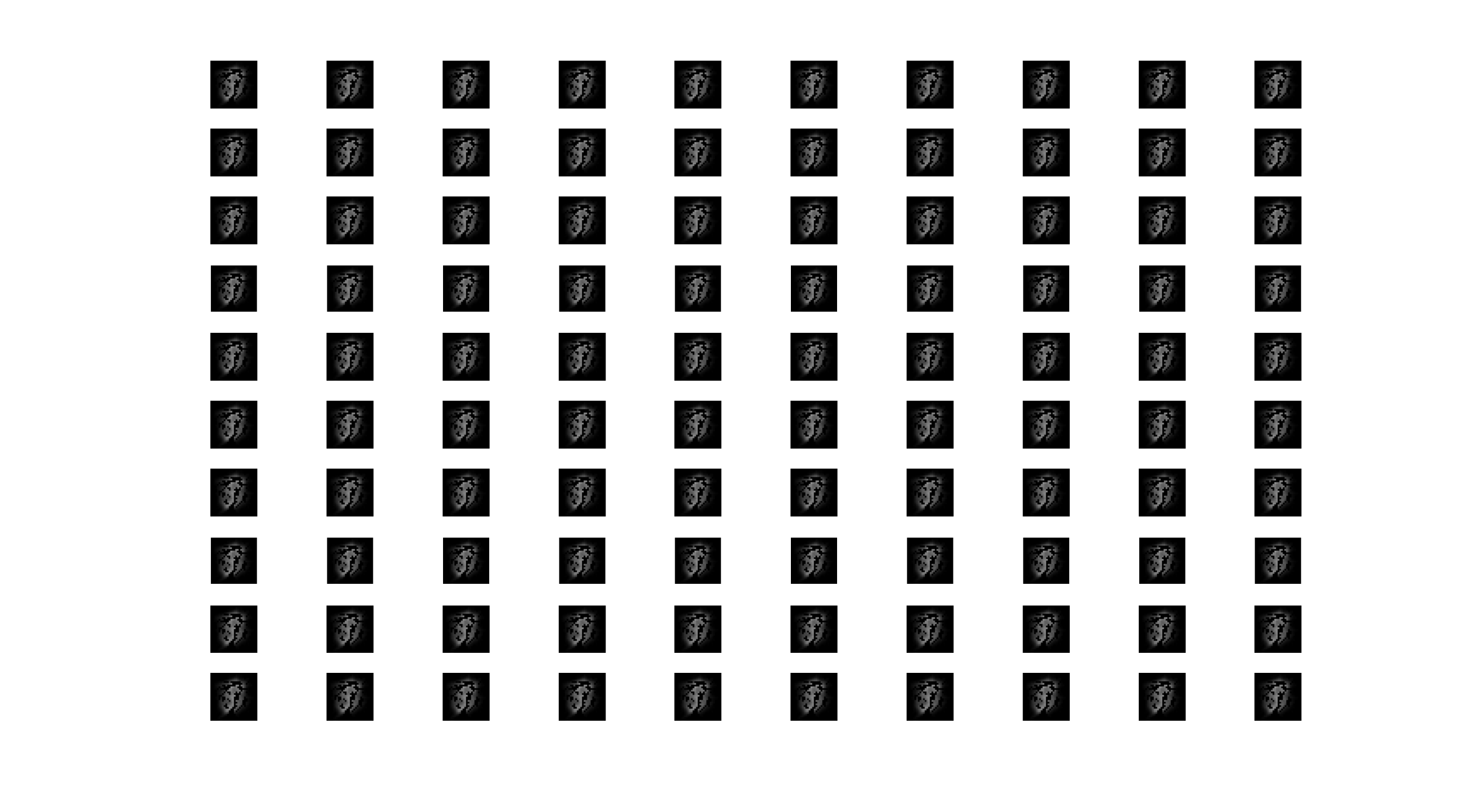}}
		\caption{Reconstruction of MNIST by SPG-ADA, Adadelta and Adam.}
		\label{fig:consalg}
		\vspace{-5mm}
	\end{figure}
	{Finally,} we demonstrate how the training and test errors
	decrease through the iterations of SPG-ADA and Adadelta.
	We select different combinations of $N$ and $N_1$.
	The results are illustrated in~Figure \ref{fig:consalg2}.
	We can learn that SPG-ADA is much more
	robust and can always find better solutions.
	\begin{figure}[htbp!]		
		\vspace{-5mm}
		\centering
		\setcounter{subfigure}{0}
		\subfloat[$N=100$, $N_1=500$]{\includegraphics[width=42mm]{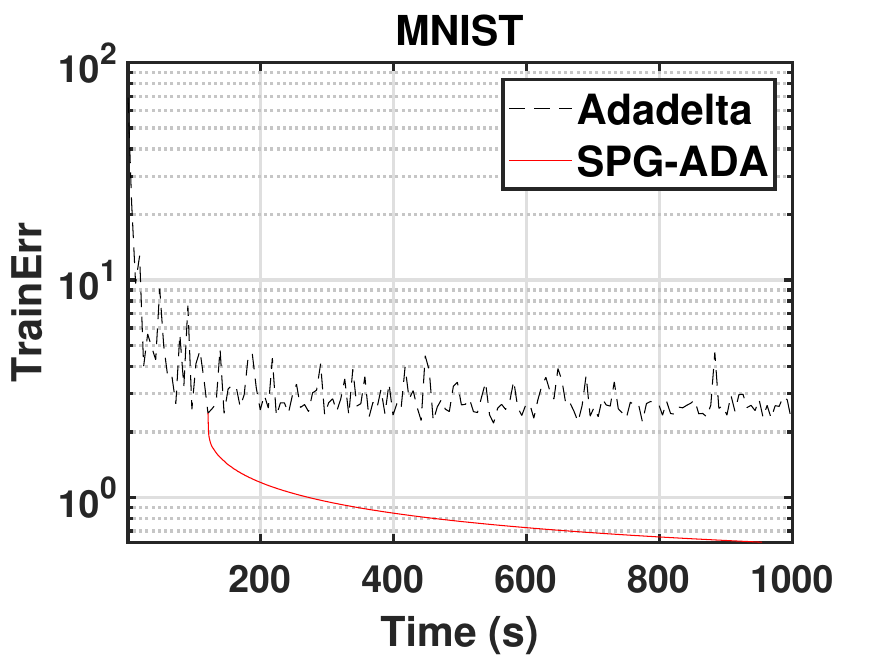}}
		\subfloat[$N=1000$, $N_1=1000$]{\includegraphics[width=42mm]{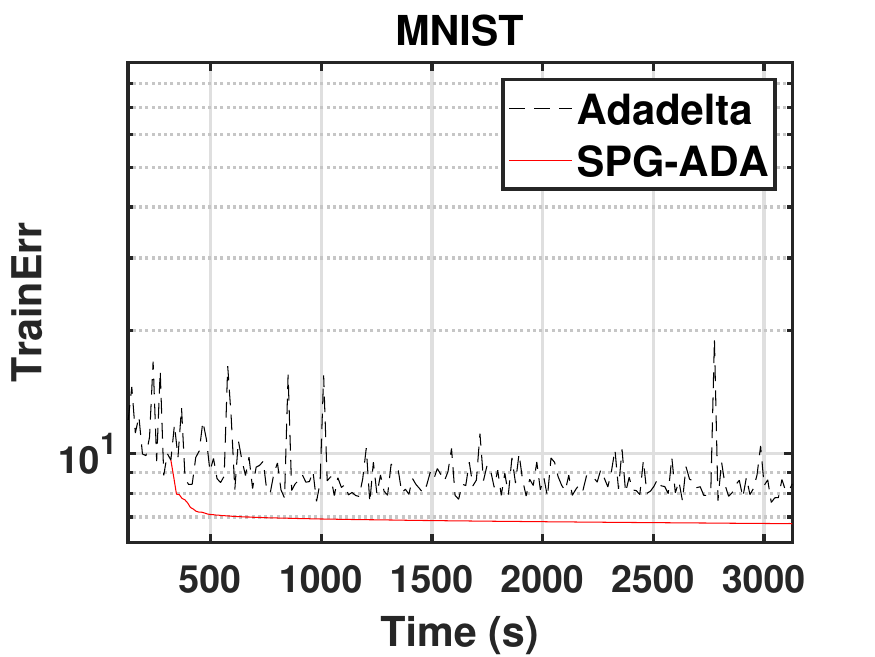}}
		\subfloat[$N=10000$, $N_1=2000$]{\includegraphics[width=42mm]{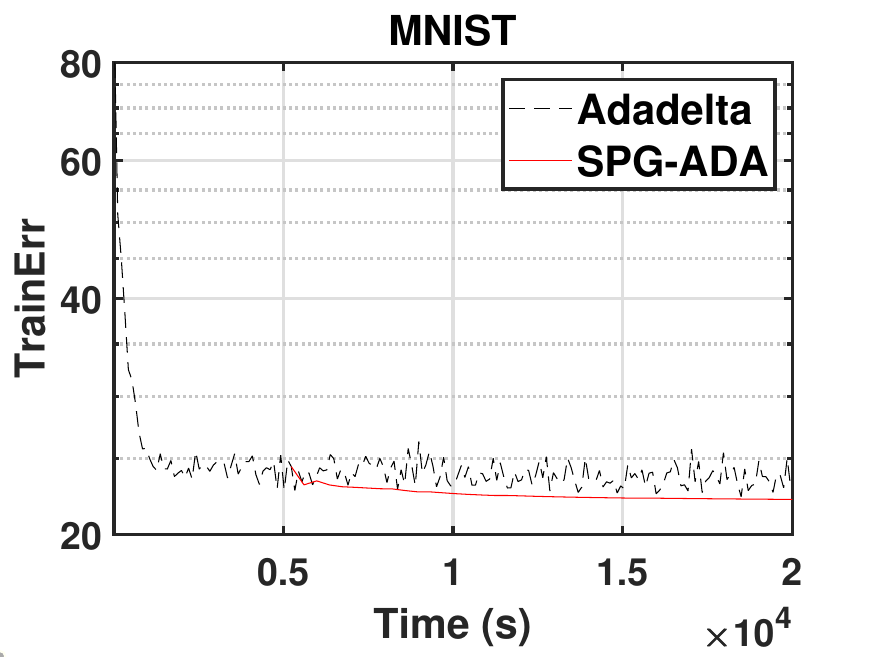}}
		\caption{Comparison between SPG-ADA and Adadelta on MNIST.}\label{fig:consalg2}
		\vspace{-5mm}
	\end{figure}
	
	\section{Conclusion}

	The regularized minimization model~\eqref{eq:reguauto2}
	using the ReLU activation function has been extensively applied for the autoencoders. However, the set of global minimizers of
	the model is generally unbounded. Existing algorithms cannot guarantee to generate bounded sequences with decreasing objective function values. In this paper,
	we propose the regularized minimization model with $l_1$-norm penalty~\eqref{eq:reguauto4} that
	has same global minimizers, local minimizer and d-stationary points with the regularized minimization model \eqref{eq:reguauto2}. Moreover, we develop the linearly constrained regularized minimization model with $l_1$-norm penalty \eqref{eq:reguauto7} which has a bounded solution set contained in the solution sets of \eqref{eq:reguauto2} and \eqref{eq:reguauto4}.
	We develop  a smoothing proximal gradient (SPG) algorithm to solve \eqref{eq:reguauto7}.
	We prove the sequence generated by the SPG algorithm is bounded and has a subsequence converging to a generalized d-stationary point of \eqref{eq:reguauto7}. We conduct comprehensive numerical experiments
	to verify the effectiveness, efficiency and robustness of the SPG algorithm.
	
	Finally, we mention that our results on the relationships among \eqref{eq:reguauto2}, \eqref{eq:reguauto4}, \eqref{eq:reguauto7} can be extended to the following three corresponding problems for training an $L$-layer DNN with ReLU activation functions, given input data $\{x_n\}_{n=1}^N$ and output data $\{y_n\}_{n=1}^N$. 
	\begin{equation}\label{eq:dnn21}
		\begin{aligned}
			\min_{z}\, & \, {\cal F}(z)+\mathcal{R}(z)\\
			\st   \, & \, u_{n,\ell}=(W_{\ell}u_{n,\ell-1}+b_{\ell})_+,  \quad n=1,\ldots,N, \, \ell=1,\ldots,L,
		\end{aligned}
	\end{equation}
	\begin{equation}\label{eq:dnn22}
		\begin{aligned}
			\min_{z}\, & \, {\cal F}(z)+\mathcal{R}(z)+\beta\sum_{n=1}^N\sum_{\ell=1}^Le\zz_{N_{\ell}}(u_{n,\ell}-(W_{\ell}u_{n,\ell-1}+b_{\ell})_+)\\
			\st  \, & \, u_{n,\ell}\geq(W_{\ell}u_{n,\ell-1}+b_{\ell})_+,  \quad n=1,\ldots,N, \, \ell=1,\ldots,L,
		\end{aligned}
	\end{equation}
	and
	\begin{equation}\label{eq:dnn23}
		\begin{aligned}
			\min_{z}\, & \, {\cal F}(z)+\mathcal{R}(z)+\beta\sum_{n=1}^N\sum_{\ell=1}^Le\zz_{N_{\ell}}(u_{n,\ell}-(W_{\ell}u_{n,\ell-1}+b_{\ell})_+)\\
			\st \, & \, u_{n,\ell}\geq (W_{\ell}u_{n,\ell-1}+b_{\ell})_+,\,\,\|b_{\ell}\|_{\infty}\le \alpha_{\ell},\quad n=1,\ldots,N, \, \ell=1,\ldots,L,
		\end{aligned}
	\end{equation}
	respectively, where $\alpha_{\ell}$ is a given constant, $u_{n,0}=x_n$, ${\cal F}(z)=\frac{1}{N}\sum^N_{n=1}\|(W_Lu_{n,L}+b_L)_+ - y_n\|^2_2$,
	${\cal R}(z)=\lambda_1\sum_{n=1}^N\sum^L_{\ell=1} e^T_{N_{\ell}} u_{n,l}
	+\lambda_2\sum^L_{\ell=1} ||W_\ell\|_F^2$, $$z=(\mathrm{vec}(W_1)\zz,\ldots,\mathrm{vec}(W_L)\zz,u_{1,1}\zz,u_{2,1}\zz,\ldots,u_{N,1}\zz,u_{1,2}\zz,\ldots,u_{N,L}\zz,b_1\zz,\ldots,b_L\zz)\zz,$$
	for all $\ell=1,2,\ldots,L$ and $n=1,2,\ldots, N$.
	{\color{black}However, the increasing number of layers results in more rapidly increasing number of variables which requires further development on the algorithm to maintain the numerical comparability to SGD-based approaches.}

	{\color{black}	\textbf{Acknowledgements}. The authors would like to thank the reviewers for their insightful comments and efforts towards improving our manuscript.}
	
	\small
	\bibliographystyle{siamplain}
	\bibliography{ref}
	
	\appendix
	\addcontentsline{toc}{chapter}{APPENDICES}

	\section{Proof of~Lemma \ref{lem:alg1}}\label{sec:bound}
	
	\begin{proof}
		
		$($a$)$
		{It follows from the updating formula \eqref{eq:mu} of $\mu$ and $L$, 
			the required relations \eqref{eq:LLL} and $\tau_1\tau_3\geq 1$ that \begin{equation}\label{eq:LLL1}
				\mu^{(k)}L^{(k)}\geq \mu^{(0)}L^{(0)} \geq \max\left\{6\lb_2N_1N_0+\frac{2}{\eta}(N_2 L_{\widetilde{\mathcal{H}}}+\lb_1N_1N),8\lb_2+L_{\nabla\widetilde{\mathcal{H}}}\right\}
			\end{equation}
			holds for all $k = 0, 1, \ldots$.}
		
		{Next, we use the} mathematical induction to prove the facts that $\{z^{(k)}\}\subset \Omega_\theta\cap {\cal Z}$ and $\{\widetilde{\mathcal{O}}(z^{(k)},\mu^{(k)})\}$ is non-increasing.
		Naturally,
		we have $z^{(0)}\in \Omega_\theta\cap {\cal Z}$. Then we suppose that $z^{(k)}\in \Omega_\theta\cap {\cal Z}$ and $\widetilde{\mathcal{O}}(z^{(l)},\mu^{(l)})\leq\widetilde{\mathcal{O}}(z^{(l-1)},\mu^{(l-1)})$ hold for all $l=1,2,\ldots,k$.
		
		We deduce from $z^{(k)}\in \Omega_\theta\cap {\cal Z}$ and the proof {of} Lemma \ref{thm:nonem} that 
		{$\|b^{(k)}\|_{\infty}\leq \alpha$ and $\|z^{(k)}-[0,0,(b^{(k)})\zz]\zz\|_{\infty}\leq\eta$.
			If $\|z^{(k+1)}-[0,0,(b^{(k)})\zz]\zz\|_{\infty}> 2\eta$,} {we immediately have $\bar{\eta}:=\|z^{(k+1)}-z^{(k)}\|_{\infty} >\eta$. Then, it holds that}
		\begin{align*}
			&\left\langle\nabla_z \widetilde{\mathcal{H}}( z^{(k)},\mu^{(k)}), z^{(k+1)}- z^{(k)}\right\rangle+\mathcal{R}( z^{(k+1)}) - \mathcal{R}( z^{(k)})+\frac{L^{(k)}}{2}\| z^{(k+1)}- z^{(k)}\|_2^2\\
			\geq& -N_2 \frac{L_{\widetilde{\mathcal{H}}}}{\mu^{(k)}}\bar{\eta}+\frac{L^{(k)}}{2}\bar{\eta}^2+\mathcal{R}( z^{(k+1)}) - \mathcal{R}( z^{(k)})
			\\\geq &  -N_2 \frac{L_{\widetilde{\mathcal{H}}}}{\mu^{(k)}}\bar{\eta}+\frac{L^{(k)}}{2}\bar{\eta}^2-\lb_1N_1N \bar{\eta}-\lb_2N_0N_1\bar{\eta}\max_{j\in\{1,2,\ldots,N_1\}}\|W_{\cdot,j}^{(k+1)}+W_{\cdot,j}^{(k)}\|_{\infty}
			\\\geq &  -N_2 \frac{L_{\widetilde{\mathcal{H}}}}{\mu^{(k)}}\bar{\eta}+\frac{L^{(k)}}{2}\bar{\eta}^2-\lb_1N_1N \bar{\eta}-\lb_2N_0N_1\bar{\eta}\left(\bar{\eta}+2\eta\right)>0,
		\end{align*}
		where the second inequality comes from the definition of $\mathcal{R}$, the third inequality {results from the relations} $\|W_{\cdot,j}^{(k+1)}+W_{\cdot,j}^{(k)}\|_{\infty}\leq \|W_{\cdot,j}^{(k+1)}-W_{\cdot,j}^{(k)}\|_{\infty}+2\|W_{\cdot,j}^{(k)}\|_{\infty}\leq \bar{\eta}+2\eta$ for all $j=1,2,\ldots,N_1$, and the last inequality comes from $0<\mu^{(k)}<1$ and~\eqref{eq:LLL1}.
		This leads to a contradiction, since $z^{(k+1)}$ is a solution of subproblem \eqref{eq:updateWBV1}. Hence, we have $\|z^{(k+1)}\|_{\infty}\leq \max\{\alpha,2\eta\}$.
		
		By the KKT condition of~\eqref{eq:updateWBV1}, there exists {a} nonnegative vector $\gamma^{{(k+1)}} \in \mathbb{R}^\nu$ such that
		\begin{equation}\label{eq:KKTsub1}
			\left\{
			\begin{aligned}
				&\nabla_{z} \widetilde{\mathcal{H}}(z^{(k)}, \mu)+\nabla \mathcal{R}(z^{(k+1)})+A\zz \gamma^{{(k+1)}}+L^{(k)}(z^{(k+1)}-z^{(k)})=0,\\
				& A z^{(k+1)} \leq c,  (\gamma^{{(k+1)}})\zz(A z^{(k+1)}-c)=0.
			\end{aligned}\right.
		\end{equation}
		
		It follows from the inequality~\eqref{eq:LLL1}, the relations \eqref{eq:KKTsub1}, $(\gamma^{{(k+1)}})\zz  (A z^{(k)}-c)\leq 0$, $0<\mu^{(k)}\leq 1$ and the definition of $L_{\nabla\widetilde{\mathcal{H}}}$ that
		\begin{equation}\label{eq:1}
			\begin{aligned}
				& \widetilde{\mathcal{O}}( z^{(k+1)},\mu^{(k)})- \widetilde{\mathcal{O}}( z^{(k)},\mu^{(k)})
				\\\leq& \left\langle\nabla_z \widetilde{\mathcal{O}}( z^{(k)},\mu^{(k)}), z^{(k+1)}- z^{(k)}\right\rangle+\frac{2\lb_2+L_{\nabla\widetilde{\mathcal{H}}}}{2\mu^{(k)}}\| z^{(k+1)}- z^{(k)}\|_2^2 \\
				=& \left\langle\nabla\mathcal{R}( z^{(k)})-\nabla\mathcal{R}( z^{(k+1)})- A\zz \gamma^{{(k+1)}}-L^{(k)}( z^{(k+1)}- z^{(k)}), z^{(k+1)}- z^{(k)}\right\rangle\\&+  \frac{2\lb_2+L_{\nabla\widetilde{\mathcal{H}}}}{2\mu^{(k)}}\| z^{(k+1)}- z^{(k)}\|_2^2\\
				\leq & \frac{2\lb_2+L_{\nabla\widetilde{\mathcal{H}}}-2\mu^{(k)}L^{(k)}}{2\mu^{(k)}}\| z^{(k+1)}- z^{(k)}\|_2^2-\left\langle A\zz \gamma^{{(k+1)}}, z^{(k+1)}- z^{(k)}\right\rangle+2\lb_2\| z^{(k+1)}- z^{(k)}\|^2_2 
				\\
				\leq& \frac{6\lb_2+L_{\nabla\widetilde{\mathcal{H}}}-2\mu^{(k)}L^{(k)}}{2\mu^{(k)}}\| z^{(k+1)}- z^{(k)}\|_2^2+(\gamma^{{(k+1)}})\zz  (A z^{(k)}-c) \leq 0.
			\end{aligned}
		\end{equation}
		
		Due to the nondecreasing property of {$\{\widetilde{\mathcal{O}}(
			z^{(k+1)},\mu)\}$ with respect to smoothing parameter $\mu$,} 
		we have
		$\widetilde{\mathcal{O}}( z^{(k+1)},\mu^{(k+1)})\leq\widetilde{\mathcal{O}}( z^{(k+1)},\mu^{(k)})$ with $\mu^{(k+1)}\leq \mu^{(k)}$. Together with {the relations \eqref{eq:obound} and \eqref{eq:1},} we arrive at $\widetilde{\mathcal{O}}( z^{(k+1)},\mu^{(k+1)})\leq\widetilde{\mathcal{O}}( z^{(k)},\mu^{(k)})$ and $\mathcal{O}(z^{(k+1)})<\theta$. Besides, it follows from the definition of $z^{(k+1)}$ that $z^{(k+1)}\in\mathcal{Z}$. Hence, we have $z^{(k+1)}\in \Omega_\theta\cap {\cal Z}$  and $\widetilde{\mathcal{O}}(z^{(l)},\mu^{(l)})\leq\widetilde{\mathcal{O}}(z^{(l-1)},\mu^{(l-1)})$ hold for all $l=1,2,\ldots,k+1$. This completes the part (a) by mathematical induction.
		
		$($b$)$ {By what we have proved in $($a$)$ and the inequality
			$\widetilde{\mathcal{O}}( z^{(k+1)},\mu^{(k)})- \widetilde{\mathcal{O}}( z^{(k)},\mu^{(k)})\geq -\tau_2\frac{\mu^{(k)}}{L^{(k)}}$, we can obtain that}
		\begin{equation*}
			-\tau_2\frac{\mu^{(k)}}{L^{(k)}}\leq\frac{6\lb_2+L_{\nabla\widetilde{\mathcal{H}}}-2\mu^{(k)}L^{(k)}}{2\mu^{(k)}}\| z^{(k+1)}- z^{(k)}\|_2^2+(\gamma^{{(k+1)}})\zz  (A z^{(k)}-c).
		\end{equation*}
		{which together with the relations 
			$(\gamma^{{(k+1)}})\zz  (A z^{(k)}-c)\leq 0$ and $\mu^{(k)}L^{(k)}>8\lb_2+L_{\nabla\widetilde{\mathcal{H}}}$ further implies that}
		\begin{align}
			&\| z^{(k+1)}- z^{(k)}\|_2^2\leq \frac{2\tau_2(\mu^{(k)})^2}{L^{(k)}(2\mu^{(k)}L^{(k)}-6\lb_2-L_{\nabla\widetilde{\mathcal{H}}})}\label{eq:kktsa1-1},\\
			&-	\tau_2\frac{(\mu^{(k)})^2}{8\lb_2+L_{\nabla\widetilde{\mathcal{H}}}}\leq (\gamma^{{(k+1)}})\zz  (A z^{(k)}-c)\leq 0.\label{eq:kktsa1-2}
		\end{align}
		It follows from the inequality~\eqref{eq:kktsa1-1} and the KKT condition~\eqref{eq:KKTsub1} 
		that
		\begin{eqnarray*}
			&&\left\|\nabla\widetilde{\mathcal{O}}( z^{(k)},\mu^{(k)})+ A\zz \gamma^{{(k+1)}}\right\|_{2}=\left\|\nabla\mathcal{R}( z^{(k)})-\nabla\mathcal{R}( z^{(k+1)})-L^{(k)}( z^{(k+1)}- z^{(k)})\right\|_{2}\notag\\
			&\leq& \left\|\nabla\mathcal{R}( z^{(k)})-\nabla\mathcal{R}( z^{(k+1)})\right\|_{2}+L^{(k)}\left\| z^{(k+1)}- z^{(k)}\right\|_{2}\notag\leq(2\lb_2+L^{(k)})\left\| z^{(k+1)}- z^{(k)}\right\|_{2}\notag\\
			&\leq& \sqrt{\frac{2\tau_2}{L^{(k)}(2\mu^{(k)}L^{(k)}-6\lb_2-L_{\nabla\widetilde{\mathcal{H}}})}}\mu^{(k)}(2\lb_2+L^{(k)})
			\,\leq\, 2\sqrt{\tau_2}(\mu^{(k)})^{1/2},\notag
		\end{eqnarray*}
		where the last inequality {results from the inequalities}  
		$0<\mu^{(k)}<1$, $\mu^{(k)}L^{(k)}>8\lb_2+L_{\nabla\widetilde{\mathcal{H}}}$ and $L^{(k)}>2\lb_2$.
		Together with~\eqref{eq:kktsa1-2}, 
		we can conclude the proof.
	\end{proof}
	
	\section{A Structured Algorithm for Solving \eqref{eq:updateWBV1}}\label{sec:alg2} 
	
	{\color{black} We notice that the subproblem~\eqref{eq:updateWBV1} is a convex quadratic programming (QP), which can be solved by any QP solvers such as
		`quadprog'~\cite{turlach2019package},
		the default QP solver in MATLAB, and `CVX'~\cite{grant2014cvx}.
		Since the subproblem of our SPG to be solved in 
		autoencoder scenario is usually large-scale but structured,
		the existing solvers are not efficient enough. Therefore, in this subsection we propose a special algorithm for subproblem ~\eqref{eq:updateWBV1}  to take the structure into account.}
	{
		We focus on this subproblem at the $k$-th iteration of SPG for any $k=0,1,...$.
		For brevity,  we will drop the superscript $(k)$ and let $(\bar{ W},\bar{ b},\bar{ V})$ 
		to denote the current iterate $(W^{(k)}, b^{(k)}, V^{(k)})$ (and similarly for $\mu^{(k)}$
		and $L^{(k)}$) in this section. In addition, we introduce a new group of variables $U=(u_1,u_2,\ldots,u_N)$ 
		subject to $u_n=Wx+b_1$ for all $n=1,...,N$. Hence
		the quadratic programming \eqref{eq:updateWBV1} can be reformulated as
		\begin{equation}\label{eq:pro1}
			\begin{aligned}
				\min_{W,b,V,U,\rho}&\,\,\left\langle g_W,W-\bar{W}\right\rangle
				+\left\langle g_{b},b-\bar{b}\right\rangle+\left\langle g_V,V-\bar{V}\right\rangle
				+\lb_1\|W\|_F^2+\lb_2\sum_{n=1}^Ne\zz v_n
				+\frac{L}{2}\|z-\bar{ z}\|_2^2\\
				\st 
				\quad& \,\,\,\,\,b\in\Omega_3, v_n\geq u_n,\,\, v_n\geq 0,\,\, u_n=Wx+b_1,\,\, \mbox{ for\, all\, }
				n=1,2,\ldots,N,
			\end{aligned}
		\end{equation}
		where $g_W=\nabla_W\widetilde{\mathcal{H}}(\bar{W},\bar{b},\bar{V},\mu),$ $g_V=\nabla_V\widetilde{\mathcal{H}}(\bar{W},\bar{b},\bar{V},\mu),$ 
		and $g_{b}=\nabla_{b}\widetilde{\mathcal{H}}(\bar{W},\bar{b},\bar{V},\mu)$
		are preset constants in this subproblem.
		
		The variables of problem \eqref{eq:pro1} can be divided into two parts $(W,b)$
		and $(V,U)$.
		We then apply the alternating direction method of multipliers (ADMM) to solve \eqref{eq:pro1}.
		By penalizing the equality constraints, we obtain the augmented Lagrange penalty function 
		\begin{equation*} 
			\begin{aligned}
				\mathcal{G}(W,b,V,U,\rho):=&\lb_1\|W\|_F^2+\lb_2\sum_{n=1}^Ne\zz v_n+\left\langle g_W,W-\bar{W}\right\rangle
				+\left\langle g_{b},b-\bar{b}\right\rangle+\left\langle g_V,V-\bar{V}\right\rangle\\
				&+\sum_{n=1}^N\left\langle\rho_n,u_n- (Wx_n+b_1) \right\rangle+\frac{1}{2}\sum_{n=1}^N\|u_n- (Wx_n+b_1)\|_2^2+\frac{L}{2}\|z-\bar{ z}\|_2^2,
			\end{aligned}
		\end{equation*}
		where $\rho=(\rho_1,\rho_2,\ldots,\rho_N)$ with $\rho_n\in\R^{N_1}$, for all $n=1,...,N$, are the
		Lagrangian multipliers associated with the equality constraints.

		At the $l$-th iteration, we first fix $W=W^{(l)}$, $b=b^{(l)},\rho=\rho^{(l)}$, and the $(V,U)$ subproblem
		can be formulated as
		\begin{equation*}\label{eq:UVsub}
			\begin{aligned}
				\min_{U,V}\,\,&\mathcal{G}(W^{(l)},b^{(l)},V,U,\rho^{(l)})\\
				\st\,\,& v_n\geq u_n,\,\, v_n\geq 0, \text{ for all }n=1,2,\ldots,N.
			\end{aligned}
		\end{equation*}
		
		Due to the separability of $v_n$ and $u_n$ for all
		$n=1,...,N$, the $(V,U)$ subproblem has also  a closed-form solution, {\color{black} which} is illustrated as follows,
		\begin{equation}\label{eq:solUVsub}
			\left\{\begin{aligned}
				&V^{(l+1)}_{j,n}=-\xi_{j,n}^{1;l},U^{(l+1)}_{j,n}=-\xi_{j,n}^{2;l},\quad\text{ if } \xi_{j,n}^{2;l}\geq \xi_{j,n}^{1;l} \text{ and }\xi_{j,n}^{1;l}\leq 0,\\
				&V^{(l+1)}_{j,n}=0,U^{(l+1)}_{j,n}=-\xi_{j,n}^{2;l},\hspace{0.36in}\text{ if } \xi_{j,n}^{2;l}\geq 0, \xi_{j,n}^{1;l} > 0,\\
				&V^{(l+1)}_{j,n}=U^{(l+1)}_{j,n}=0,\hspace{0.72in}\text{ if } \xi_{j,n}^{2;l}<0, \text{ and }L\xi_{j,n}^{1;l}+\xi_{j,n}^{2;l}> 0,\\
				&V^{(l+1)}_{j,n}=U^{(l+1)}_{j,n}=-\frac{L\xi_{j,n}^{1;l}+\xi_{j,n}^{2;l}}{L+1},\hspace{0.01in}\text{ otherwise}
			\end{aligned}\right.
		\end{equation}
		for all $j=1,2,\ldots,N_1$ and $n=1,2,\ldots,N$.
		Here, $\xi_n^{1;l}=g_{v_n}/L-\bar{ v}_n+\lb_2e/L$, $\xi_n^{2;l}=\rho^{(l)}_n-(W^{(l)}x_n+b^{(l)}_1)$,  $g_{v_n}$ is the $n$-th column of $g_V$,  
		and $\xi_{j,n}^{1;l}$ and $\xi_{j,n}^{2;l}$ are the $j$-th elements of $\xi_n^{1;l}$
		and $\xi_n^{2;l}$, respectively, for all $j=1,2,\ldots,N_1$ and $n=1,2,\ldots,N$.
		
		Secondly, we fix $V=V^{(l+1)}$, $U=U^{(l+1)},\rho=\rho^{(l)}$, and then the $(W,b)$ {\color{black} subproblem}
		can be written as
		\begin{equation*} 
			\begin{aligned}
				\min_{W,b\in\Omega_3}\,\,&\mathcal{G}(W,b,V^{(l+1)},U^{(l+1)},\rho^{(l)}).
			\end{aligned}
		\end{equation*}
		By simply calculation, we can obtain its closed-form solution as
		follows.
		\begin{equation}\label{eq:solWbsub}
			W^{(l+1)}=\widehat{W}^{(l+1)}\widetilde{I}\zz, \, \mbox{and} \quad b^{(l+1)}=\mathrm{Proj}_{\Omega_3}(\widehat{W}^{(l+1)}s_{N_0+1}, \bar{ b}_2 - g_{b_2}/L),
		\end{equation}
		where 
		\begin{equation*}
			\widehat{W}^{(l+1)}=\left(-[g_W, g_{b_1}]+L[\bar{W},\bar{b}_1]+\rho^{(l)}\widehat{X}\zz +U^{(l+1)}\widehat{X}\zz \right)\left(LI_{N_0+1}+2\lb_1\widetilde{I}\zz \widetilde{I}+\widehat{X}\widehat{X}\zz \right)^{-1},
		\end{equation*}
		$s_{N_0+1}=(0,0,\ldots,0,1)\in\R^{N_0+1}$, 
		$[g_{b_1}\zz, g_{b_2}\zz] = g_b\zz$, 
		$\widehat{X}:=(X\zz,1_N)\zz$ and 
		$\widetilde{I}=[I_{N_0},0]$.
		
		Finally, we present the framework of ADMM for solving  the subproblem  \eqref{eq:updateWBV1}. 
		
		\begin{algorithm}[H]
			\caption{A Splitting and Alternating Method for the Quadratic Programming~\eqref{eq:updateWBV1} (SAMQP)}\label{alg:activeset}
			\begin{algorithmic}[1]
				\STATE{	Initialization: set $(W^{(l)},b^{(l)},V^{(l)})=(\bar{W},\bar{b},\bar{V})$,  $\rho^{(l)}=0$, $u^{(l)}_n=W^{(l)}x_n+b^{(l)}_1$ for all $n=1,...,N$, and $l:=0$.}
				\WHILE{the stop criterion is not met}
				\STATE{Calculate $V^{(l+1)},U^{(l+1)}$ by~\eqref{eq:solUVsub};}
				\STATE{Calculate $W^{(l+1)},b^{(l+1)}$ by~\eqref{eq:solWbsub};}
				\STATE{Calculate $\rho_n^{(l+1)}=\rho_n^{(l)}+(u^{(l+1)}_n- (W^{(l+1)}x_n+b^{(l+1)}_1))$, for
					all $n=1,2,\ldots,N$;}
				\STATE{Set $l:=l+1$.}
				\ENDWHILE
			\end{algorithmic}
		\end{algorithm}
		
		Since the subproblem~\eqref{eq:updateWBV1} is strongly convex, any sequence generated by~SAMQP, a two block ADMM,  converges to a global solution of~\eqref{eq:updateWBV1}. {\color{black} 
			Furthermore, the local R-linear rate convergence
			of SAMQP 
			can be guaranteed by Boley \cite{boley2013local}. }
	
	To test the efficiency of SAMQP, we
	construct the following randomly generated test problems. We set $X=\mathrm{rand}(N_0,N)$,
	$g_W=\mathrm{rand}(N_1,N_0)$, $g_b=\mathrm{rand}(N_1+N_0,1)$, $g_V=\mathrm{rand}(N_1, N)$,
	$\bar{W} = \mathrm{randn}(N_1,N_0)/N$,  
	$\bar{V}_n=(\bar{W}x_n)_{+}$ for all $n=1,2,\ldots,N$ and
	$\bar{b}=0$.
	The problem parameters $\mu$ and $L$ are set as $0.001$ and $1$, respectively.
	In addition, the stopping criterion is set as  $$\max\left\{\|\rho^{(l+1)}-\rho^{(l)}\|_F^{2},\|U^{(l+1)}-U^{(l)}\|_F^{2}\right\}\leq 10^{-6}.$$
	
	We compare SAMQP with some existing QP solvers including the `quadprog' solver from MATLAB, the `fmincon' solver from MATLAB and the `CVX' solver \cite{grant2014cvx} for solving \eqref{eq:updateWBV1}. 
	We choose seven test problems with different sizes. We record the CPU time in seconds required
	by these solvers. The results are displayed in Table \ref{tb:1}, in which
	``--" stands for the cases that the solver runs out of memory during the iteration
	or terminates abnormally. It can be easily observed that SAMQP is the most efficient and robust one
	among these four solvers.}

\begin{table}[tbhp]
	\centering
	\scriptsize
	\caption{A comparison of CPU time for several solvers and~SAMQP.}
	\begin{tabular}{|c|c|c|c|c|c|c|c|c|c|}
		\hline
		\multicolumn{4}{|c|}{}&\multicolumn{4}{|c|}{CPU time (s)}\\
		\hline
		$N$&$N_1$ & $N_0$&$N_2$& `fmincon' & `quadprog'  & `CVX'  & SAMQP \\
		\hline
		100&5&5&535&3.502&0.707&2.031&0.099\\
		\hline
		100&10&10&1120&33.990&4.546&1.172&0.105\\
		\hline
		100&20&20&2440&674.163&39.303&1.781&0.189\\
		\hline
		100&40&40&5680&--&359.555&6.672&0.419\\
		\hline
		100&100&10&11110&--&--&7.453&0.838\\
		\hline
		1000&100&10&101110&--&--&50.781&6.056\\
		\hline
		10000&784&1000&8625784&--&--&--&189.868\\
		\hline
	\end{tabular}\label{tb:1}
\end{table}

\end{document}

                          